\documentclass[11pt,reqno]{amsart}
\usepackage[margin=1.15in]{geometry}
\usepackage{amsmath,amssymb,amsthm}
\usepackage{enumitem}
\usepackage{tikz}
\usetikzlibrary{decorations.markings}
\usepackage[colorlinks=true,linkcolor=blue,citecolor=blue,urlcolor=blue]{hyperref}

\theoremstyle{plain}
\newtheorem{theorem}{Theorem}[section]
\newtheorem{lemma}[theorem]{Lemma}
\newtheorem{corollary}[theorem]{Corollary}
\theoremstyle{definition}
\newtheorem{definition}[theorem]{Definition}
\newtheorem{example}[theorem]{Example}

\numberwithin{equation}{section}

\newcommand{\ZZ}{\mathbb{Z}}
\newcommand{\kk}{K}
\newcommand{\reg}{\operatorname{reg}}
\newcommand{\pd}{\operatorname{pd}}
\newcommand{\dpt}{\operatorname{depth}}
\newcommand{\Ind}{\operatorname{Ind}}
\newcommand{\Cl}{\operatorname{Cl}}
\newcommand{\lk}{\operatorname{lk}}
\newcommand{\indm}{\nu}
\newcommand{\Hred}{\widetilde{H}}
\newcommand{\Gnm}{G_{n,m}}
\newcommand{\Dnm}{\Delta_{n,m}}
\newcommand{\ball}[1]{B(#1)}
\newcommand{\cB}{\mathcal{B}}
\newcommand{\cM}{\mathcal{M}}
\newcommand{\cP}{\mathcal{P}}
\newcommand{\cQ}{\mathcal{Q}}

\begin{document}

\title[Edge ideals of complements of a class of circular interval graphs]
{Edge ideals of complements of a class of circular interval graphs}

\author[B. A. Wani]{Bilal Ahmad Wani$^*$}
\address{
Department of Mathematics\\
National Institute of Technology\\
Srinagar-190006\\
India}
\email{bilalwanikmr@gmail.com}
\thanks{$^*$ Corresponding author}

\begin{abstract}
Let $C_n^m$ be the $m$-th power of the cycle on $n$ vertices, and let
$\Gnm$ be its complement, where $m\ge1$ and $n\ge3m+1$. We prove that every induced
subcomplex of the clique complex of $C_n^m$ is homotopy equivalent either to
a disjoint union of contractible complexes or to a circle, and we establish
this dichotomy for a larger class of circular interval graphs. As a
consequence, we obtain a closed formula for the linear strand of the Betti
table of $R/I(\Gnm)$. We also prove that $R/I(\Gnm)$ is a Buchsbaum ring of
depth $2$ and that the independence complex of $\Gnm$ is a triangulated
$m$-manifold with boundary for $m\ge2$, namely a M\"obius band or an
annulus when $m=2$. For the
complement $\overline H$ of every graph $H$ in the above class, we show
that $\reg(I(\overline H)^k)=2k+\indm(\overline H)-1$ for all $k\ge2$, where
$\indm$ denotes the induced matching number. In
particular, some power of $I(\Gnm)$ has a linear resolution if and only if
$n\ge4m+1$. In this range, we further prove that the colon ideals
$(I^{k+1}:M)$, where $I=I(\Gnm)$ and $M$ is a minimal monomial generator of
$I^k$, have regularity $2$.
\end{abstract}

\subjclass[2020]{13F55, 13D02, 13H10, 05E40, 05E45, 05C69}
\keywords{Edge ideal, Castelnuovo--Mumford regularity, powers of ideals,
independence complex, circular interval graph, Buchsbaum ring, induced
matching}

\maketitle

\section{Introduction}

Let $\kk$ be a field and let $R=\kk[x_0,\dots,x_{n-1}]$ be the polynomial ring
in $n$ variables over $\kk$. To every finite simple graph $G$ on the vertex
set $\{0,\dots,n-1\}$ one associates its edge ideal
\[
I(G)=(x_ux_v:\{u,v\}\in E(G))\subseteq R.
\]
The regularity of edge ideals and of their powers has been studied by many
authors. Fr\"oberg \cite{Fro} proved that $I(G)$ has a linear resolution if
and only if the complement $\overline G$ is chordal. Beyarslan, H\`a and
Trung \cite{BHT} showed that $\reg(I(G)^k)\ge2k+\indm(G)-1$ for every
$k\ge1$, where $\indm(G)$ denotes the induced matching number of $G$, while
the upper bound $\reg(I(G)^k)\le2k+\reg I(G)-2$ remains conjectural. Nevo
and Peeva \cite{NP} asked whether $\reg(I(G)^k)=2k$ for all $k\ge2$ whenever
$G$ is gap-free with $\reg I(G)=3$. Banerjee, Beyarslan and H\`a \cite{BBH}
called $G$ locally linear if $\reg(I(G):x)\le2$ for every vertex $x$, and
they proved that locally linear graphs satisfy the conjectured upper bound
and that gap-free locally linear graphs satisfy $\reg(I(G)^k)=2k$ for every
$k\ge2$. Minh and Vu \cite{MV} settled the cases $k=2$ and $k=3$ for every
gap-free graph of regularity three.

In this paper, we study these questions for the family
\[
\Gnm=\overline{C_n^m},\qquad I=I(\Gnm),
\]
where $C_n^m$ is the $m$-th power of the cycle on $\ZZ_n$, $m\ge1$ and
$n\ge3m+1$.
This is exactly the range in which the maximal cliques of $C_n^m$ are the
sets of $m+1$ cyclically consecutive vertices. The graded Betti numbers of
$R/I$ in this range were studied by Rather, Pirzada and Aijaz \cite{RPA}.
They computed the second strand of the Betti table, showed that
$\reg(R/I)=2$ and $\pd(R/I)=n-2$, obtained the linear strand recursively,
and proved that $\indm(\Gnm)$ equals $2$ for $3m+1\le n\le4m$ and $1$ for
$n\ge4m+1$. For $m=2$ these results are due to Rather, Pirzada and Singh
\cite{RPS}.

Our first main result, Theorem \ref{thm:dichotomy}, concerns graphs $H$ on
$\ZZ_N$ whose maximal cliques are arcs and which satisfy
$|N_H[v]|+\omega(H)\le N+1$ at every vertex $v$. These are circular interval
graphs in the sense of Chudnovsky and Seymour \cite{CS}, and for $H=C_n^m$
the condition is equivalent to $n\ge3m+1$. Clique complexes of powers of
cycles and of families of arcs have been studied in \cite{Ad,AA,AAFPP},
and the homotopy type of $\Cl(C_n^m)$ itself is known in this range.
Hochster's formula, however, requires control of every induced subcomplex.
We prove that, for every nonempty $W\subseteq\ZZ_N$, the complex
$\Cl(H)[W]$ is homotopy equivalent to a disjoint union of $c(W)$
contractible complexes when $c(W)\ge1$ and to $S^1$ when $c(W)=0$, where
$c(W)$ is the number of cyclically consecutive pairs of $W$ whose
connecting arcs are not cliques of $H$. For $H=C_n^m$, the case $c(W)=0$
and the vanishing of higher homology were obtained in \cite{RPA}. Combining
the dichotomy with Hochster's formula, we obtain in Theorem
\ref{thm:linstrand} the closed formula
\[
\beta_{k-1,k}(R/I)=n\binom{n-m-1}{k-1}-\binom nk+\frac nkC_m(n,k),
\]
where $C_m(n,k)$ is the number of compositions of $n$ into $k$ parts
belonging to $\{1,\dots,m\}$. In Theorem \ref{thm:buchsbaum}, we prove that
$R/I$ is a Buchsbaum ring of depth $2$ which is Cohen--Macaulay if and only
if $m=1$, and that $\Ind(\Gnm)$ is a triangulated $m$-manifold, with
boundary for $m\ge2$. For $m=2$ it is a M\"obius band if $n$ is odd and an
annulus if $n$ is even.

The second part of the paper concerns powers of edge ideals. In Theorem
\ref{thm:gen-loclin}, we prove that the complement $\overline H$ of every
graph $H$ in the above class is locally linear. Together with the results
of \cite{BBH} and \cite{BHT}, this gives
$\reg(I(\overline H)^k)=2k+\indm(\overline H)-1$ for every $k\ge2$ in
Theorem \ref{thm:gen-powers}. For $\Gnm$, we obtain in Theorem
\ref{thm:main} that
\[
\reg(I^k)=
\begin{cases}
2k+1,&3m+1\le n\le4m,\\
2k,&n\ge4m+1,
\end{cases}
\]
for every $k\ge2$, and in the first range for every $k\ge1$. In
particular, some power of $I$ has a linear resolution if and only if
$n\ge4m+1$. In this range, we study the even-connection graphs of Banerjee
\cite{Ba} and prove in Theorem \ref{thm:key} that $\reg(I^{k+1}:M)=2$ for
every minimal monomial generator $M$ of $I^k$, with an explicit perfect
elimination ordering obtained from the circular order on $\ZZ_n$. Together
with an inequality of Banerjee \cite[Theorem 5.2]{Ba}, this gives a direct
proof of the equality $\reg(I^k)=2k$ for $k\ge2$.

Several special cases are known. For $m=1$, the graphs $G_{n,1}$ are the
complements of cycles. Biermann \cite{Bi} constructed a cellular minimal
free resolution of $I(G_{n,1})$, and Basser, Diethorn, Miranda and
Stinson-Maas \cite{BDMS} proved that $I(G_{n,1})^k$ has linear quotients
for every $k\ge2$. The graph $\Gnm$ is the circulant graph
$C_n(m+1,\dots,\lfloor n/2\rfloor)$, and graded Betti numbers of several
families of circulant graphs were computed by Anand and Roy \cite{AR}.
Finally, $\Gnm$ is cubic exactly when $(n,m)$ is $(6,1)$, $(8,2)$ or
$(10,3)$, and in these cases our values agree with those of Hang, Pham and
Vu \cite{HPV} for cubic circulant graphs.

This paper is organized as follows. In Section \ref{sec:prelim}, we recall
the necessary background. In Section \ref{sec:dichotomy}, we prove the
homotopy dichotomy, and in Section \ref{sec:betti} we derive its
consequences for the Betti numbers and the Buchsbaum and manifold
structure. Section \ref{sec:powers} treats the regularity of powers, and
Section \ref{sec:colon} the colon ideals for $n\ge4m+1$.

\section{Preliminaries}\label{sec:prelim}

In this section, we collect the definitions and known results that will be
used in the rest of the paper. Throughout, all graphs are finite and simple,
$\kk$ is a field, and $R$ denotes the polynomial ring over $\kk$ in the
variables $x_v$, where $v$ runs over the vertex set of the graph under
consideration. For the graphs $\Gnm$, this is the ring
$R=\kk[x_0,\dots,x_{n-1}]$ of the introduction.

Let $G$ be a graph with vertex set $V$. The edge ideal of $G$ is
$I(G)=(x_ux_v:\{u,v\}\in E(G))$, and the complement of $G$ is denoted by
$\overline{G}$. We write $\Ind(G)$ for the independence complex of $G$, that
is, the simplicial complex on $V$ whose faces are the independent sets of
$G$. Recall that $I(G)$ is the Stanley--Reisner ideal of $\Ind(G)$ and that
$\Ind(G)=\Cl(\overline{G})$, where $\Cl(\overline{G})$ denotes the clique
complex of $\overline{G}$. For a simplicial complex $\Delta$ and a subset
$W\subseteq V$, we denote by $\Delta[W]$ the subcomplex of $\Delta$ induced
on $W$. For a face $F$ of $\Delta$, the link of $F$ is
$\lk_\Delta(F)=\{S:S\cap F=\emptyset,\ S\cup F\in\Delta\}$. An
\emph{induced matching} of $G$ is a set of pairwise disjoint edges of $G$
such that no edge of $G$ joins two of them. The induced
matching number $\indm(G)$ is the largest number of edges in an induced
matching of $G$, and $G$ is called \emph{gap-free} if $\indm(G)=1$. A graph
is \emph{chordal} if every cycle of length at least four has a chord. A
vertex of a graph is \emph{simplicial} if its neighbors form a clique. An
ordering $v_1,\dots,v_N$ of the vertices is a \emph{perfect elimination
ordering} if each $v_i$ is simplicial in the subgraph induced by
$\{v_i,\dots,v_N\}$, and a graph is chordal if and only if it admits a
perfect elimination ordering (see \cite[Chapter 9]{HHbook}).
Following \cite{BBH}, we call $G$ \emph{locally linear} if
$\reg(I(G):x)\le2$ for every vertex $x$. Regularity always means
Castelnuovo--Mumford regularity, and for a nonzero graded ideal $I$ of $R$
generated in positive degrees we use $\reg(R/I)=\reg I-1$.

We now recall some known results. The first
is Hochster's formula, which expresses the graded Betti numbers of a
Stanley--Reisner ring in terms of the reduced homology of induced
subcomplexes.

\begin{theorem}[{\cite[Corollary 5.12]{MS}}]\label{thm:hochster}
Let $\Delta$ be a simplicial complex on $V$. Then, for every pair $i,j$,
\[
\beta_{i,j}(R/I_\Delta)
=\sum_{W\subseteq V,\ |W|=j}\dim_\kk\Hred_{j-i-1}(\Delta[W];\kk).
\]
\end{theorem}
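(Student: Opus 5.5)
The statement just given is Hochster's formula, which the paper quotes from Miller--Sturmfels; I would prove it by computing $\operatorname{Tor}^R_i(R/I_\Delta,\kk)$ in two ways, using the $\ZZ^V$-grading. Both $R/I_\Delta$ and $\kk=R/(x_v:v\in V)$ are $\ZZ^V$-graded. Hence $\beta_{i,j}(R/I_\Delta)$ splits as a sum over multidegrees $\mathbf a\in\ZZ_{\ge0}^V$ with $|\mathbf a|=j$ of
\[
\beta_{i,\mathbf a}=\dim_\kk\operatorname{Tor}_i^R(R/I_\Delta,\kk)_{\mathbf a}.
\]
The plan has three steps. First, compute Tor by resolving $\kk$ with the Koszul complex $K_\bullet$ on $x_v$, $v\in V$, and tensoring with $R/I_\Delta$. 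Second, show that only squarefree multidegrees $\mathbf a$ contribute. Third, identify the degree-$\mathbf a$ strand with a simplicial cochain complex of $\Delta[W]$, where $W=\operatorname{supp}\mathbf a$.

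For the first two steps, $K_i$ has basis $e_F$ indexed by $F\subseteq V$ with $|F|=i$, in multidegree $\mathbf 1_F$. So $(K_\bullet\otimes R/I_\Delta)_{\mathbf a}$ in homological degree $i$ has basis the elements $m\,e_F$ with $m$ a monomial outside $I_\Delta$, $|F|=i$ and $\deg m+\mathbf 1_F=\mathbf a$. The vector space $(R/I_\Delta)_{\mathbf b}$ is one-dimensional exactly when $\operatorname{supp}\mathbf b\in\Delta$, and zero otherwise. If some coordinate $a_v\ge2$, I would show the strand is acyclic. Multiplication by $x_v$ is an isomorphism $(R/I_\Delta)_{\mathbf b}\to(R/I_\Delta)_{\mathbf b+e_v}$ whenever $b_v\ge1$, since the support does not change. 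Therefore the strand is the mapping cone of the identity in the $e_v$-direction, i.e. a Koszul complex on one variable acting bijectively. Concretely, the strand decomposes as $C\otimes(\kk\xrightarrow{\cong}\kk)$, so it is exact. This disposes of the non-squarefree $\mathbf a$.

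For squarefree $\mathbf a=\mathbf 1_W$, the basis consists of pairs $F\subseteq W$, $|F|=i$, with $W\setminus F\in\Delta$, i.e. $G=W\setminus F$ is a face of $\Delta[W]$ of dimension $|W|-i-1$. The Koszul differential sends $e_F$ to a signed sum $\sum\pm x_u e_{F\setminus u}$, moving $u$ from $F$ into $G$. This is exactly the simplicial coboundary of $\Delta[W]$, up to signs, including the empty face which gives reduced cochains. I expect the main obstacle to be the sign bookkeeping: checking that $\pm$ matches the coboundary after the complementation $F\leftrightarrow W\setminus F$. I would fix a total order on $V$ and compare the sign $(-1)^{\#\{f\in F:f<u\}}$ with the coboundary sign via a twist by $(-1)^{\#\{(f,g):f\in F,\,g\in G,\,f<g\}}$, which gives an isomorphism of complexes.

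This identification yields
\[
\operatorname{Tor}_i(R/I_\Delta,\kk)_{\mathbf 1_W}\cong\Hred^{|W|-i-1}(\Delta[W];\kk).
\]
Over a field, $\dim\Hred^{q}=\dim\Hred_q$. Summing over $|W|=j$ then gives the formula with index $j-i-1$.
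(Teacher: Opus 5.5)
The paper does not prove this statement. It quotes it from Miller--Sturmfels and uses it as a black box, so there is no argument in the paper to compare against.

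Your proposal is a correct, self-contained proof along the classical lines:
\begin{enumerate}[label=\textup{(\roman*)}]
\item resolve $\kk$ by the Koszul complex;
\item kill each strand with some $a_v\ge2$, since it is the mapping cone of multiplication by $x_v$, which is an isomorphism of strands there;
\item identify the $\mathbf 1_W$-strand with the reduced cochain complex of $\Delta[W]$, the empty face sitting in homological degree $|W|$.
\end{enumerate}
Compared with the citation, what this buys is self-containedness and the finer multigraded fact that all Betti numbers sit in squarefree degrees.

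One detail in your sign bookkeeping needs a small correction. Use the usual Koszul sign $(-1)^{\#\{f\in F:\,f<u\}}$ and the usual coboundary sign $(-1)^{\#\{g\in G:\,g<u\}}$, and rescale by $\epsilon(F,G)=(-1)^{\#\{(f,g)\in F\times G:\,f<g\}}$. Then on basis elements $x^Ge_F$ with $|G|$ fixed you get $\phi\circ\partial=(-1)^{|G|}\,\delta\circ\phi$. This already shows up for $W=\{1,2\}$ with $\{1,2\}\in\Delta$ and $G=\{1\}$, so $\phi$ is not literally a chain map.

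The discrepancy is harmless for your purpose, because it depends only on the degree. Hence $\phi$ still carries cycles to cocycles and boundaries to coboundaries, and the homology dimensions agree. If you want an honest isomorphism of complexes, multiply $\phi$ additionally by $(-1)^{\binom{|G|}{2}}$.
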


The following theorem of Fr\"oberg characterizes the edge ideals with linear
resolution.

\begin{theorem}[{\cite{Fro}}]\label{thm:froberg}
Let $G$ be a graph with at least one edge. Then $\reg I(G)=2$ if and only if
$\overline{G}$ is a chordal graph.
\end{theorem}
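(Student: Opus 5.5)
Fröberg's theorem is quoted from the literature, so we only sketch the standard proof. It uses Hochster's formula (Theorem \ref{thm:hochster}) applied to $\Delta=\Ind(G)=\Cl(\overline G)$, whose Stanley--Reisner ideal is $I(G)$.

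Since $I(G)$ is generated in degree $2$, we have $\reg I(G)=2$ exactly when $\beta_{i,j}(R/I(G))=0$ whenever $j-i\ge 2$ and $i\ge1$. By Hochster's formula this means
\[
\Hred_{t}(\Cl(\overline G)[W];\kk)=0\quad\text{for all } W\subseteq V \text{ and all } t\ge1 .
\]
Note that $\Cl(\overline G)[W]=\Cl(\overline G[W])$. The task is therefore to show that $\overline G$ is chordal if and only if every induced subgraph of $\overline G$ has a clique complex with vanishing reduced homology in all degrees $t\ge 1$.

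\emph{Necessity.} Suppose $\overline G[W]$ contains an induced cycle $C$ of length $\ell\ge4$, and take $W=V(C)$. The clique complex of $C$ is the cycle itself as a $1$-dimensional complex, because $C$ has no triangles. So $\Cl(\overline G)[W]\simeq S^1$ and $\Hred_1\neq0$. Hence a non-chordal complement forces a nonzero Betti number $\beta_{\ell-2,\ell}$ off the linear strand, and $\reg I(G)\ge3$.

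\emph{Sufficiency.} Assume $\overline G$ is chordal. Every induced subgraph of a chordal graph is chordal, so it suffices to show that the clique complex of any chordal graph $H$ is homotopy equivalent to a disjoint union of contractible spaces, one per connected component. We argue by induction on $|V(H)|$, using a perfect elimination ordering. Let $v$ be a simplicial vertex of $H$. Then $N_H[v]$ is a clique, so $\Cl(H)$ is the union of $\Cl(H-v)$ and the full simplex on $N_H[v]$. These two pieces meet in the simplex on $N_H(v)$.

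If $N_H(v)\neq\emptyset$, the intersection is contractible, so gluing on the simplex does not change the homotopy type, and the component count is unchanged. If $N_H(v)=\emptyset$, adding $v$ adds an isolated point, which is a new contractible component. Induction then gives that all $\Hred_t$ with $t\ge1$ vanish, and in fact all homology vanishes except $\Hred_0$.

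The main subtlety is the gluing step. One can either cite the gluing lemma for CW complexes along a contractible subcomplex, or argue directly with Mayer--Vietoris: the intersection is acyclic, so $\Hred_t(\Cl(H))\cong \Hred_t(\Cl(H-v))$ for $t\ge1$. The Mayer--Vietoris route is enough for our purposes.
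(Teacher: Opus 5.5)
The paper does not prove this statement; it quotes it from \cite{Fro} and uses it as a black box. Your sketch is the standard Hochster-formula proof and is correct. An induced cycle of length $\ell\ge4$ in $\overline G$ gives $\beta_{\ell-2,\ell}(R/I(G))\ne0$. Conversely, deleting a simplicial vertex $v$ with $N(v)\ne\emptyset$ leaves the homotopy type unchanged, because its link is a nonempty simplex. This is exactly the mechanism of Lemma \ref{lem:deletion}(1) that the paper uses in Lemma \ref{lem:interval}.
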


Katzman proved the following lower bound for the regularity of an edge
ideal.

\begin{theorem}[{\cite{Ka}}]\label{thm:katzman}
For every graph $G$, we have $\reg(R/I(G))\ge\indm(G)$.
\end{theorem}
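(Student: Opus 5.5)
The plan is to exhibit a nonzero graded Betti number $\beta_{i,j}(R/I(G))$ with $j-i=\indm(G)$, using Hochster's formula (Theorem \ref{thm:hochster}) applied to $\Delta=\Ind(G)$, whose Stanley--Reisner ideal is $I(G)$. Since $\reg(R/I(G))=\max\{j-i:\beta_{i,j}(R/I(G))\neq0\}$, such a Betti number gives the bound directly. If $\indm(G)=0$ there is nothing to prove, so I assume $s=\indm(G)\ge1$.

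First fix an induced matching $\{a_1,b_1\},\dots,\{a_s,b_s\}$ of $G$ and let $W=\{a_1,b_1,\dots,a_s,b_s\}$, so $|W|=2s$. Because the matching is induced, the induced subgraph $G[W]$ has exactly the $s$ edges $a_tb_t$ and no others. Hence a subset $F\subseteq W$ is independent in $G$ if and only if it contains at most one vertex from each pair $\{a_t,b_t\}$. This identifies $\Ind(G)[W]=\Ind(G[W])$ with the join of $s$ copies of the zero-sphere $S^0=\{\{a_t\},\{b_t\}\}$. That join is the boundary of the $s$-dimensional cross-polytope, a simplicial $(s-1)$-sphere, so $\Hred_{s-1}(\Ind(G)[W];\kk)\cong\kk\neq0$.

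Now take $j=2s$ and $i=s$ in Hochster's formula. Then $j-i-1=s-1$, and the summand indexed by $W$ is nonzero. Since all summands are nonnegative, $\beta_{s,2s}(R/I(G))\ge1$, and therefore $\reg(R/I(G))\ge 2s-s=s=\indm(G)$.

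No step is a serious obstacle. The only point needing care is the combinatorial identification of $\Ind(G)[W]$ as an iterated join of copies of $S^0$, which uses exactly that no edges of $G$ join different matching edges. The homology of a join of spheres (a sphere, or alternatively the Künneth formula for joins) then finishes the computation. An alternative route avoids topology: $I(G[W])$ is a complete intersection of $s$ quadrics in disjoint variables, its Koszul resolution gives $\reg=s$, and Betti numbers of the quotient by an induced subgraph's edge ideal are bounded above by those of $R/I(G)$ by the restriction property of Hochster's formula. I would present the first argument.
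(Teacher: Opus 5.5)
Your proof is correct: since the matching is induced, $G[W]=sK_2$, so $\Ind(G)[W]$ is the boundary of the $s$-dimensional cross-polytope, and Theorem \ref{thm:hochster} with $i=s$, $j=2s$ gives $\beta_{s,2s}(R/I(G))\ge1$, hence $\reg(R/I(G))\ge s$ over every field.

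The paper does not prove this statement; it only cites Katzman \cite{Ka}. Your argument is essentially the standard one from that source. Katzman restricts Betti numbers to an induced subgraph via Hochster's formula and then computes the complete intersection $I(sK_2)$. You fold both steps into a single computation using Theorem \ref{thm:hochster}, which the paper already states, so your version is self-contained within the paper.
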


Beyarslan, H\`a and Trung extended this inequality to all powers.

\begin{theorem}[{\cite[Theorem 4.5]{BHT}}]\label{thm:bht}
Let $G$ be a graph. Then $\reg(I(G)^k)\ge2k+\indm(G)-1$ for every $k\ge1$.
\end{theorem}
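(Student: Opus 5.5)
The statement is the Beyarslan--H\`a--Trung bound $\reg(I(G)^k)\ge 2k+\indm(G)-1$. I would prove it by restricting to an induced matching and using that taking induced subgraphs cannot increase the regularity of powers.

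Fix an induced matching $\{e_1,\dots,e_s\}$ of $G$ with $s=\indm(G)$, and let $H$ be the subgraph of $G$ induced on the $2s$ endpoints. Since the matching is induced, $H$ is exactly the disjoint union of the $s$ edges $e_1,\dots,e_s$.

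\emph{Step 1: restriction.} For an induced subgraph $H$ of $G$, I claim $\reg(I(H)^k)\le\reg(I(G)^k)$. Let $W=V(H)$. Because $H$ is induced, $I(H)^k$ is precisely the part of $I(G)^k$ spanned by monomials supported on $W$. I would pass to the $\ZZ^n$-graded minimal free resolution of $I(G)^k$ and keep only the multidegrees supported on $W$; equivalently, apply the exact functor that sets $x_v=1$ for $v\notin W$ and restricts to the relevant multigraded strand. This gives a (possibly non-minimal, but with no extra shifts) free resolution of $I(H)^k$ over $\kk[x_w:w\in W]$, whose multidegree shifts are a subset of those for $I(G)^k$. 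Hence $\beta_{i,\mathbf a}(I(H)^k)=\beta_{i,\mathbf a}(I(G)^k)$ for every $\mathbf a$ supported on $W$, and the regularity inequality follows. The point to check carefully is that multigraded Betti numbers in multidegrees supported on $W$ depend only on the restriction. This can also be seen via upper Koszul simplicial complexes: the complex attached to multidegree $\mathbf a$ involves only monomials dividing $x^{\mathbf a}$, and these are all supported on $W$.

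\emph{Step 2: the matching.} Now $I(H)=(x_1y_1,\dots,x_sy_s)$ is a complete intersection of $s$ quadrics in disjoint variables. I would compute $\reg(I(H)^k)$ directly. One route is to note that powers of a complete intersection are resolved by the Eagon--Northcott/Gulliksen-type complexes, so $\reg(I^k)=2k+(2-1)(s-1)=2k+s-1$, using the known formula $\reg(J^k)=dk+(d-1)(h-1)$ for a complete intersection $J$ of $h$ forms of degree $d$. More elementarily, I would induct on $s$ using the tensor product formula: for ideals $A,B$ in disjoint sets of variables, $(A+B)^k=\sum_{i+j=k}A^iB^j$, and regularity of sums in disjoint variables is additive in the appropriate sense, $\reg(R/(A+B))=\reg(R/A)+\reg(R/B)$. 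Alternatively, to get just the lower bound, I would exhibit a single nonzero Betti number. The monomial $M=(x_1y_1)^{k}\,x_2y_2\cdots x_sy_s$ of degree $2k+2s-2$ should appear as a multidegree of the top homological degree $s-1$ in the Taylor-type resolution, and it survives minimality because the lcm lattice here is Boolean in the $s$ directions. This gives $\beta_{s-1,2k+2s-2}\neq0$ and hence $\reg\ge 2k+2s-2-(s-1)=2k+s-1$.

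Combining the two steps gives $\reg(I(G)^k)\ge\reg(I(H)^k)\ge 2k+s-1$. I expect Step 1 to be the main technical point, namely justifying that restriction to $W$ preserves the multigraded Betti numbers. Step 2 is then a standard computation, and for the inequality only the lower bound is needed.
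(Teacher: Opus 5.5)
Your proposal is correct and is essentially the Beyarslan--H\`a--Trung proof of the result the paper quotes. You restrict to the induced subgraph $H$ on a maximum induced matching, where the multigraded Betti numbers in multidegrees supported on $V(H)$ do not change; your upper Koszul complex argument is the clean justification for this. You then use $\reg(I(H)^k)=2k+s-1$ for $s$ disjoint edges. Your class in the multidegree of $(x_1y_1)^kx_2y_2\cdots x_sy_s$ is indeed nonzero in homological degree $s-1$, because the upper Koszul complex there is homotopy equivalent to $S^{s-2}$.

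Two small corrections. First, ``setting $x_v=1$ for $v\notin W$'' is not an equivalent description of the restriction, since it sends $x_vx_w$ to $x_w$. Second, additivity of $\reg(R/(A+B))$ alone does not control $(A+B)^k$. So of your three routes for Step 2, only the complete-intersection formula and the explicit Betti number actually finish the argument.
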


The next theorem summarizes the results of Banerjee, Beyarslan and H\`a on
locally linear graphs.

\begin{theorem}[{\cite[Theorems 3.6 and 4.5]{BBH}}]\label{thm:bbh}
Let $G$ be a locally linear graph and let $I=I(G)$. Then
$\reg(I^k)\le2k+\reg I-2$ for every $k\ge1$. If in addition $G$ is gap-free,
then $\reg(I^k)=2k$ for every $k\ge2$.
\end{theorem}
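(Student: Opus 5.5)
The plan is to follow the approach of Banerjee via colon ideals, since the statement concerns all powers and induction on $k$ is the natural route. The key input is Banerjee's inequality: for every $s\ge1$,
\[
\reg(I^{s+1})\le\max\bigl\{\reg(I^{s+1}:M)+2s,\ \reg(I^s)\bigr\},
\]
where $M$ ranges over the minimal monomial generators of $I^s$. The argument rests on understanding $(I^{s+1}:M)$ concretely. By Banerjee's description, it is generated by quadrics: the edge ideal $I(G)$ together with the monomials $x_ux_v$ (possibly $u=v$) for which $u$ and $v$ are \emph{even-connected} with respect to $M$. After polarizing the squares $x_u^2$, this becomes the edge ideal of a graph $G_M\supseteq G$ on a slightly enlarged vertex set. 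This reduces the first claim to the bound
\[
\reg(I^{s+1}:M)\le\reg I \qquad\text{for all } s\ge1,
\]
after which induction on $s$ gives $\reg(I^{s+1})\le 2s+\reg I=2(s+1)+\reg I-2$.

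To prove the colon bound I would use local linearity vertex by vertex. Each new edge $x_ux_v$ of $G_M$ comes from an even walk $u=p_0,p_1,\dots,p_{2t+1}=v$ through edges $p_{2i-1}p_{2i}$ dividing $M$. I would first show that $x_u$ and $x_v$ both lie in the ideal generated by neighbourhood-type data of a single vertex, so that locally $(I^{s+1}:M)$ looks like $(I:x)$ plus variables. Concretely, I would use the exact-sequence estimate
\[
\reg J\le\max\{\reg(J:x)+1,\ \reg(J,x)\}
\]
repeatedly, splitting off vertices of $G_M$. At each step, either the piece $(J:x)$ reduces, modulo linear forms, to $(I:x)$ restricted to an induced subgraph, which has regularity at most $2$ by local linearity and since restriction to induced subgraphs does not increase regularity; or the piece $(J,x)$ is the colon ideal for a smaller graph or smaller $M$, handled by induction. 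Bookkeeping the $+1$ shifts should yield $\reg(I^{s+1}:M)\le\max\{3,\reg I\}$. Since $\reg I\ge 3$ whenever $I$ is not linear, and the linear case is covered by the chordal-complement argument below, this gives the claim.

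For the gap-free statement the target is sharper: $\reg(I^{s+1}:M)=2$, that is, by Theorem \ref{thm:froberg}, the complement of $G_M$ is chordal. I would argue by contradiction. Suppose there is an induced cycle $C$ of length at least $4$ in $\overline{G_M}$. The even-connection walks, together with gap-freeness (no two edges of $G$ form an induced $2K_2$), force some vertex $x$ of $M$'s support to be adjacent in $G$ to enough vertices of $C$. Then $C$ survives as an induced anticycle in the graph underlying $(I:x)$, contradicting $\reg(I:x)\le2$, again by Theorem \ref{thm:froberg}. Once the colon bound equals $2$, Banerjee's inequality and induction give $\reg(I^k)\le2k$. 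The reverse inequality $\reg(I^k)\ge2k$ is immediate, since $I^k$ is generated in degree $2k$ (it is also Theorem \ref{thm:bht} with $\indm=1$).

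The main obstacle is the combinatorial step in both parts: controlling even-connected pairs $u,v$ that are far apart in $G$ via long walks through $M$. I would try first to prove a lemma that reduces long walks to walks of length $3$, using gap-freeness or local linearity at the intermediate vertices, so that every new edge of $G_M$ is witnessed inside $(I:x)$ for a single vertex $x$.
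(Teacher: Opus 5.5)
The paper does not prove this statement. It quotes it from \cite{BBH} and uses it as a black box in Theorem \ref{thm:gen-powers}, so your proposal has to stand on its own. Your outer skeleton is sound. Banerjee's inequality plus induction reduces the first claim to $\reg(I^{s+1}:M)\le\reg I$. In effect the target is $\le 3$, since local linearity already forces $\reg I\le 3$ by splitting off vertices. Via Fr\"oberg it reduces the second claim to chordality of $\overline{G_M}$, and the lower bound $\reg(I^k)\ge 2k$ is trivial. This is also the strategy the paper carries out in Section \ref{sec:colon} for $\Gnm$ with $n\ge4m+1$. However, the entire content of the theorem lies in those two colon bounds, and for them you give only intentions (``I would first show'', ``should yield'', ``force some vertex''). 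As written, the proposal proves nothing beyond the reduction.

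Moreover, the mechanisms you sketch do not work as stated.
\begin{enumerate}
\item For a vertex $w$, the ideal $(J:x_w)$, modulo the variables of $N_{G_M}(w)$, is the edge ideal of $G_M-N_{G_M}[w]$. This graph contains every even-connection edge $uv\notin E(G)$ with $u,v\notin N_{G_M}[w]$, and such monomials are not in $(I:x_w)$. Take the path $u-a-b-v$ plus an isolated vertex $w$, with $M=x_ax_b$. Then $(J:x_w)=J\ni x_ux_v$, while $(I:x_w)=I$. So $(J:x)$ is not ``$(I:x)$ restricted to an induced subgraph'', and local linearity gives no direct control of it. You also never say which vertex to split off, or prove that a good one exists.
\item $(J,x_w)$ is a colon ideal of the same type for $G-w$ only when $w\notin\operatorname{supp}M$. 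If $w$ is in the support, $M$ is not a product of edges of $G-w$, and monomials $x_ux_v$ coming from even-connections routed through $w$ survive in $(J,x_w)$. So once only support vertices remain, your induction has nothing to stand on.
\item In the gap-free part the direction is reversed. Passing to $(I:x)$ turns the neighbours of $x$ into variables, so an anticycle $C$ could survive only among non-neighbours of $x$. Even then, the non-consecutive pairs of $C$ are edges of $G_M$, possibly even-connection edges rather than edges of $G$. Hence $C$ need not be induced in $\overline{G-N_G[x]}$, and no contradiction with $\reg(I:x)\le 2$ follows.
\end{enumerate}

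The lemma you flag at the end, reducing long even-connections to ones witnessed at a single vertex, is precisely the missing idea, and nothing in the proposal supplies it. In Section \ref{sec:colon} the paper can verify chordality of $\overline{G'}$ only because of the metric structure of balls in $\ZZ_n$ (Lemmas \ref{lem:transfer}--\ref{lem:elim}). A general gap-free locally linear graph has no such structure, so you need a genuinely general structural statement about even-connected pairs.

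Finally, for the linear case $\reg I=2$, cite Herzog--Hibi--Zheng, as the paper does in Theorem \ref{thm:gen-powers}, rather than routing it through the unproved gap-free argument.
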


A simplicial complex is \emph{pure} if all its facets have the same
dimension, and a face of a pure complex $\Delta$ has \emph{codimension one}
if its dimension is $\dim\Delta-1$. A \emph{shelling} of a pure complex is
an ordering $F_0,\dots,F_s$ of its facets such that, for every
$1\le p\le s$, the intersection $F_p\cap\bigcup_{q<p}F_q$ is a nonempty
union of codimension-one faces of $F_p$. A pure complex is \emph{shellable}
if its facets admit a shelling.

We shall use the following criterion of Schenzel for the Buchsbaum property
of a Stanley--Reisner ring.

\begin{theorem}[{\cite[Chapter II]{StV}}]\label{thm:schenzel}
Let $\Delta$ be a simplicial complex. Then $\kk[\Delta]$ is Buchsbaum if and
only if $\Delta$ is pure and $\Hred_i(\lk_\Delta F;\kk)=0$ for every
nonempty face $F$ of $\Delta$ and every $i<\dim\lk_\Delta F$. In this case,
\[
\dpt\kk[\Delta]
=\min\{\dim\kk[\Delta],\ \min\{i+1:\Hred_i(\Delta;\kk)\ne0\}\}.
\]
\end{theorem}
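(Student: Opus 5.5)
This is Schenzel's theorem, and I would derive it from Hochster's formula for local cohomology rather than from Theorem \ref{thm:hochster} directly. Write $d=\dim\kk[\Delta]=\dim\Delta+1$ and $\mathfrak m$ for the irrelevant ideal.

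The basic tool is the $\ZZ^V$-graded description of local cohomology. For $a\in\ZZ^V$ with negative part supported on a set $F$ (all other coordinates zero), one has
\[
\dim_\kk H^i_{\mathfrak m}(\kk[\Delta])_a=\dim_\kk\Hred_{i-|F|-1}(\lk_\Delta F;\kk)
\]
when $F\in\Delta$. The graded piece vanishes if $a$ has a positive coordinate or if $F\notin\Delta$. In particular, the degree-$0$ piece is $\Hred_{i-1}(\Delta;\kk)$, and the pieces in nonzero degrees are the homologies of links of nonempty faces.

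The plan has three steps.

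First, I establish the key reduction. A Stanley--Reisner ring is Buchsbaum if and only if it is equidimensional and $H^i_{\mathfrak m}(\kk[\Delta])$ is concentrated in degree $0$ (as a $\ZZ^V$-graded module) for every $i<d$. The implication "concentrated in degree $0$ $\Rightarrow$ Buchsbaum" goes via Schenzel's criterion: the natural maps $\operatorname{Ext}^i_R(\kk,\cdot)\to H^i_{\mathfrak m}$ are surjective for $i<d$. Using the squarefree $\ZZ^V$-grading and the Koszul/\v{C}ech description, this surjectivity can be checked componentwise, and only the degree-$0$ component is nonzero. Conversely, Buchsbaum implies quasi-Buchsbaum, so $\mathfrak m H^i_{\mathfrak m}=0$ for $i<d$. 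The multigraded structure then forces concentration in degree $0$: multiplication by $x_v$ from degree $a$ to degree $a+e_v$ is an isomorphism whenever $a_v<0$ and the link is unchanged. So a nonzero piece in degree $a\neq0$ would propagate to a piece that $\mathfrak m$ does not kill.

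Second, I translate the reduction through the displayed formula. Vanishing of all nonzero-degree pieces for $i<d$ says exactly that $\Hred_{j}(\lk_\Delta F;\kk)=0$ for every nonempty $F\in\Delta$ and every $j<d-|F|-1$. Equidimensionality of $\kk[\Delta]$ is purity of $\Delta$. Under purity, $\dim\lk_\Delta F=d-|F|-1$, so this is precisely the stated condition $\Hred_i(\lk_\Delta F;\kk)=0$ for $i<\dim\lk_\Delta F$. I also need to check that the link condition already forces purity once the vertex links are taken into account. If it does not, purity is simply kept as a separate hypothesis, as in the statement.

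Third, I read off the depth from $\dpt\kk[\Delta]=\min\{i:H^i_{\mathfrak m}(\kk[\Delta])\neq0\}$. For $i<d$ only the degree-$0$ piece $\Hred_{i-1}(\Delta;\kk)$ can survive, and $H^d_{\mathfrak m}\ne0$ always. Hence the depth is $\min\{d,\ \min\{i+1:\Hred_i(\Delta;\kk)\neq0\}\}$, which is the formula in the statement.

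The main obstacle is the first step, specifically the implication "quasi-Buchsbaum $\Rightarrow$ Buchsbaum" for Stanley--Reisner rings. In general this requires Schenzel's Ext-surjectivity criterion, and the componentwise verification in the $\ZZ^V$-graded Koszul complex is the delicate part. Since the paper cites \cite{StV}, I would follow their Chapter II treatment for that step and do the rest by the explicit Hochster computation above.
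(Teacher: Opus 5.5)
The paper does not prove this statement; it quotes it from St\"uckrad--Vogel. Your outline is the standard derivation and it is correct: Hochster's formula for $H^i_{\mathfrak m}(\kk[\Delta])$, multigraded propagation to show that quasi-Buchsbaum forces $H^i_{\mathfrak m}$, $i<d$, into multidegree $0$, the surjectivity criterion for the converse, and the depth read off from the degree-$0$ pieces $\Hred_{i-1}(\Delta;\kk)$.

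A few points would tighten it.
\begin{enumerate}
\item In the propagation step, start from $a-e_v$ rather than $a$. Then $x_v\colon H^i_{\mathfrak m}(\kk[\Delta])_{a-e_v}\to H^i_{\mathfrak m}(\kk[\Delta])_a$ runs between degrees with the same negative support $F\ni v$. There $x_v$ is a unit in every \v{C}ech term $\kk[\Delta]_{x_T}$ with $T\supseteq F$, so the map is an isomorphism.
\item The step you flag as delicate is actually short. Over the polynomial ring $S$, the Koszul complex on the variables resolves $\kk$, so $\operatorname{Ext}^i_S(\kk,\kk[\Delta])=H^i(\mathbf{x};\kk[\Delta])$. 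In multidegree $0$, both this Koszul cochain complex and the \v{C}ech complex reduce to the augmented cochain complex of $\Delta$ shifted by one. The comparison map $x_Te_T^*\mapsto 1$ is an isomorphism of these complexes. Hence, once $H^i_{\mathfrak m}$ vanishes off degree $0$ for $i<d$, the natural map $\operatorname{Ext}^i_S(\kk,\kk[\Delta])\to H^i_{\mathfrak m}(\kk[\Delta])$ is surjective, and the St\"uckrad--Vogel criterion applies.
\item On the purity question you left open: the link condition alone does not force purity. A triangle together with a disjoint vertex satisfies it, so purity must stay a hypothesis, as in the statement. Degree-$0$ concentration, however, does force purity. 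A facet $F$ with $|F|<d$ contributes $\Hred_{-1}(\{\emptyset\};\kk)=\kk$ to $H^{|F|}_{\mathfrak m}$ in a nonzero degree.
\end{enumerate}
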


The following theorem of Danaraj and Klee identifies balls among
shellable complexes.

\begin{theorem}[{\cite{DK}}]\label{thm:dk}
Let $\Delta$ be a pure shellable simplicial complex in which no face of
codimension one belongs to more than two facets. Then $\Delta$ is either a
ball or a sphere, and it is a ball as soon as some face of codimension one
belongs to only one facet.
\end{theorem}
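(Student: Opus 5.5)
The plan is to induct along the shelling $F_0,\dots,F_s$ of the $d$-dimensional complex $\Delta$. Set $\Delta_p=F_0\cup\dots\cup F_p$, where each $F_q$ stands for the full simplex with all its faces. I will show that each $\Delta_p$ is a PL $d$-ball, except that possibly the final $\Delta_s$ is a $d$-sphere. The base case is clear, since $\Delta_0=F_0$ is a $d$-simplex and hence a ball. Note that the hypothesis on codimension-one faces passes to every $\Delta_p$, because $\Delta_p$ is a subcomplex of $\Delta$ with fewer facets.

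For the inductive step, suppose $\Delta_{p-1}$ is a $d$-ball and put $A=F_p\cap\Delta_{p-1}$. By the definition of a shelling, $A$ is a nonempty union of $r$ codimension-one faces of the simplex $F_p$, with $1\le r\le d+1$. I will use two standard facts from PL topology.
\begin{enumerate}[label=(\roman*)]
\item A union of $r$ facets of $\partial F_p$ is a $(d-1)$-ball when $r\le d$, and is all of $\partial F_p\cong S^{d-1}$ when $r=d+1$.
\item Gluing two $d$-balls along a $(d-1)$-ball lying in both of their boundaries gives a $d$-ball. Gluing a $d$-ball to a $d$-simplex along the whole boundary of both gives a $d$-sphere.
\end{enumerate}

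To apply (ii), I need $A\subseteq\partial\Delta_{p-1}$. Let $G$ be a codimension-one face of $F_p$ contained in $A$. It lies in $F_p$ and in at least one of $F_0,\dots,F_{p-1}$. Since at most two facets of $\Delta$ contain $G$, exactly one earlier facet contains it. Hence $G$ is a boundary ridge of the pseudomanifold-with-boundary $\Delta_{p-1}$, and so $A$ lies in $\partial\Delta_{p-1}$, as it is generated by such ridges. There are now two cases.
\begin{itemize}
\item If $r\le d$, then $A$ is a $(d-1)$-ball in the boundary of both $F_p$ and $\Delta_{p-1}$, so $\Delta_p$ is a $d$-ball.
\item If $r=d+1$, then $\partial F_p\subseteq\partial\Delta_{p-1}$. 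I would argue that $\partial\Delta_{p-1}$ is a $(d-1)$-sphere, so the sub-sphere $\partial F_p$ must be all of it. Then $\Delta_p$ is a ball with its boundary capped off by a simplex, hence a $d$-sphere.
\end{itemize}

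In the second case every ridge of $\Delta_p$ already lies in two facets. Any later facet would meet $\Delta_p$ in a ridge that is then contained in three facets, contradicting the hypothesis. So a sphere can only appear at the last step, $p=s$. For the final dichotomy: if some codimension-one face of $\Delta$ lies in only one facet, then $\Delta$ has nonempty boundary. A sphere cannot have such a face, so $\Delta$ is a ball.

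The main obstacle is the PL input, namely facts (i) and (ii), in particular Newman's theorem that a $(d-1)$-ball in the boundary sphere of a $d$-ball can be used for gluing. I would quote these from standard PL topology (Rourke--Sanderson) rather than reprove them. A secondary point to check carefully is that $A$ is a subcomplex of $\partial\Delta_{p-1}$, and not merely that its maximal faces are. This follows because $A$ is pure and generated by boundary ridges, and the boundary complex is closed under taking faces.
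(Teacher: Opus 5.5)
Your proposal is correct. It is the standard induction along the shelling using the PL gluing lemmas, which is essentially how Danaraj and Klee prove the result; the paper gives no proof of its own and simply quotes it from their work. The one step you leave implicit, that the subcomplex $\partial F_p$ must exhaust the $(d-1)$-sphere $\partial\Delta_{p-1}$, follows at once: every $(d-2)$-face of $\partial F_p$ already lies in two $(d-1)$-faces of $\partial F_p$, and $\partial\Delta_{p-1}$ is a strongly connected pseudomanifold, so no further $(d-1)$-face can be present.
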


We next recall the notion of even-connectedness, introduced by Banerjee,
which describes the generators of the colon ideals $(I^{k+1}:M)$.

\begin{definition}[{\cite[Definition 6.2]{Ba}}]\label{def:evenconn}
Let $M=e_1\cdots e_k$ be a product of edges of a graph $G$. Two vertices $u$
and $v$, not necessarily distinct, are \emph{even-connected with respect to
$M$} if there exist $\ell\ge1$ and vertices $p_0=u,p_1,\dots,p_{2\ell+1}=v$
such that the following conditions hold.
\begin{enumerate}[label=\textup{(\roman*)}]
\item $\{p_i,p_{i+1}\}\in E(G)$ for every $0\le i\le2\ell$.
\item For each $1\le i\le\ell$, the edge
$\{p_{2i-1},p_{2i}\}$ is one of the $e_j$.
\item No edge $e_j$ is used in (ii) more often than its multiplicity in $M$.
\end{enumerate}
\end{definition}

The following theorem of Banerjee makes this description precise.

\begin{theorem}[{\cite[Theorems 6.1 and 6.7]{Ba}}]\label{thm:banerjeecolon}
Let $I=I(G)$, let $k\ge1$, and let $M$ be a minimal monomial generator of
$I^k$. Then $(I^{k+1}:M)$ is generated in degree two. Moreover,
$x_ux_v$ is a minimal generator of $(I^{k+1}:M)$ if and only if either
$\{u,v\}\in E(G)$ or $u$ and $v$ are even-connected with respect to $M$.
\end{theorem}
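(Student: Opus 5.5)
The plan is to prove both assertions together with one exchange argument, comparing two factorizations of a monomial.

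First, the easy direction. Write $M=e_1\cdots e_k$, where $e_j$ also denotes the monomial $x_ax_b$ of the edge $\{a,b\}$. Suppose $u,v$ are even-connected with respect to $M$ via $p_0=u,p_1,\dots,p_{2\ell+1}=v$. Then
\[
x_ux_v\prod_{i=1}^{\ell}x_{p_{2i-1}}x_{p_{2i}}=\prod_{i=0}^{\ell}x_{p_{2i}}x_{p_{2i+1}}.
\]
The left side is $x_ux_v$ times $\ell$ of the factors $e_j$, used with multiplicity allowed by (iii). The right side is a product of $\ell+1$ edges by (i). Multiplying both sides by the remaining $k-\ell$ factors of $M$ gives $x_ux_vM\in I^{k+1}$. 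If $\{u,v\}\in E(G)$, then trivially $x_ux_vM\in I^{k+1}$. No element of degree $\le1$ lies in the colon, since $\deg(x_wM)=2k+1<2k+2$. Hence all these $x_ux_v$ are minimal generators, provided we also show that every element of the colon is divisible by one of them.

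Second, the converse, which also gives generation in degree two. Since $I^{k+1}$ and $M$ are monomial, the colon is a monomial ideal. Let $f$ be a monomial with $fM\in I^{k+1}$, and write
\[
fM=g\,e'_1\cdots e'_{k+1},
\]
where the $e'_j$ are edges and $g$ is a monomial. Form a multigraph $\Gamma$ whose red edges are $e_1,\dots,e_k$ and whose blue edges are $e'_1,\dots,e'_{k+1}$, with repetitions. For each vertex $w$, let $r(w)$ and $b(w)$ be its red and blue degrees. Then
\[
b(w)\le r(w)+\deg_{x_w}f,
\]
and the set of vertices with $b(w)>r(w)$ carries total excess at least $2$. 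The core step is to extract from $\Gamma$ an alternating walk that starts and ends with blue edges, in which each red edge is used at most as often as it occurs in $M$, and whose two endpoints $u,v$ (possibly equal) satisfy $x_ux_v\mid f$. I would do this by the standard alternating-path argument, as in the symmetric difference of matchings. Start at a vertex $w_0$ with $b(w_0)>r(w_0)$ and leave by a blue edge. At each later vertex, if blue has not yet run out relative to red there, continue by an unused red edge and then an unused blue edge. Because blue edges outnumber red edges by one, a counting and parity argument over the whole of $\Gamma$ should force the walk to terminate on a blue edge at a vertex with surplus blue degree. Those surplus degrees come from $f$, which gives $x_u\mid f$ and $x_v\mid f$. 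The walk then witnesses that $u$ and $v$ are even-connected, or, when it consists of a single blue edge, that $\{u,v\}\in E(G)$. Hence $x_ux_v\mid f$ with $x_ux_v$ in the colon, which proves both that the colon is generated in degree two and the "only if" direction.

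The main obstacle is making the walk extraction rigorous. Three points need care. First, repeated edges in $M$ and in the blue factorization must be handled by multiplicity counts. Second, the walk may revisit vertices, so it must be organized as a walk rather than a path, matching Definition \ref{def:evenconn}, which does not require distinct $p_i$. Third, when $u=v$ we need $x_u^2\mid f$, which means showing that the surplus at $u$ is at least $2$. My first approach would be to decompose $\Gamma$ into closed alternating trails plus open alternating trails, an Euler-type decomposition pairing red and blue half-edges at each vertex. The excess blue edge forces an open trail with both end edges blue. Its endpoints then absorb degree from $f$, and this should also settle the $u=v$ case.
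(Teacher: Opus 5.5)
The paper gives no proof of this statement. It quotes Banerjee's Theorems 6.1 and 6.7 and uses them as a black box, so your argument has to stand on its own. It does, but only in the form of your last paragraph. The greedy walk you describe first is not enough.

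When the leftover factor $g$ is nontrivial there can be vertices with $r(w)>b(w)$, and a walk started at an arbitrary surplus vertex can get stuck on a red edge. Take a graph with edges $\{y,z\},\{u,y\},\{v,t\}$, let $M=x_yx_z$ and $f=x_ux_vx_t$, and write $fM=(x_ux_y)(x_vx_t)\,x_z$. Then $u$ has blue surplus, but the walk $u,y,z$ has no blue edge leaving $z$. The actual witness is the single blue edge $\{v,t\}$. So your claim that counting and parity force the walk to end correctly is false for a walk from a chosen start; the count has to be global.

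The half-edge pairing supplies exactly that global count.
\begin{itemize}
\item At each $w$, pair $\min\{r(w),b(w)\}$ red half-edges with blue ones. Following edges and pairings splits $\Gamma$ into closed trails and open trails, all alternating, whose ends are precisely the unpaired half-edges.
\item The number of unpaired blue half-edges minus unpaired red ones is $\sum_w(b(w)-r(w))=2(k+1)-2k=2$. An open trail contributes $2$, $0$ or $-2$ to this difference according as its ends are blue--blue, mixed or red--red. Hence some open trail $p_0,\dots,p_{2\ell+1}$ has both end edges blue.
\item Each red edge occurrence is traversed at most once, which gives condition (iii) of Definition \ref{def:evenconn}.
\item The two end half-edges are distinct unpaired blue half-edges. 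So $b(w)-r(w)\le\deg_{x_w}f$ gives $x_{p_0}x_{p_{2\ell+1}}\mid f$. When $p_0=p_{2\ell+1}=u$, it gives $b(u)-r(u)\ge2$, hence $x_u^2\mid f$. This settles your $u=v$ worry.
\end{itemize}

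What this route buys is that it treats an arbitrary monomial $f$ of the colon at once. Degree-two generation and the characterization of the degree-two generators therefore come out of one argument.

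A greedy walk from $u$ does work when $f=x_ux_v$: degrees then force $g=1$ and $b(w)=r(w)$ for $w\notin\{u,v\}$, so the walk can always be continued until it arrives by a blue edge at $v$ (or back at $u$ when $u=v$). That case covers only the ``only if'' part, not generation in degree two.

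Finally, minimality of a degree-two element $x_ux_v$ of the colon needs nothing beyond your observation that the colon contains no element of degree at most one. The proviso you attach to it is unnecessary.
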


We shall also use the fact that polarization preserves regularity
\cite[Corollary 1.6.3]{HHbook}. In the situation of Theorem
\ref{thm:banerjeecolon}, set $J=(I^{k+1}:M)$. Let $G'$ be the graph on $V$
whose edges are the pairs $\{u,v\}$ with $u\ne v$ and $x_ux_v\in J$, and let
$S=\{u:x_u^2\in J\}$. Since $J$ is generated in degree two, it is
generated by the monomials $x_ux_v$ with $\{u,v\}\in E(G')$ and $x_u^2$ with
$u\in S$. Polarization replaces each $x_u^2$ by $x_ux_{u'}$ for a new
variable $x_{u'}$. Hence the polarization $J^{\mathrm{pol}}$ is the edge
ideal of the graph $G''$ obtained from $G'$ by attaching a pendant vertex
$u'$ to each $u\in S$.

\section{The homotopy dichotomy}\label{sec:dichotomy}

In this section, we prove Theorem
\ref{thm:dichotomy}, which determines the homotopy type of every induced
subcomplex of the clique complex of a graph in the following class.

\begin{definition}\label{def:class}
Let $N\ge2$, and let $H$ be a graph on $\ZZ_N$ whose maximal cliques are
\emph{arcs}, that is, sets of cyclically consecutive elements of $\ZZ_N$.
Write $\omega=\omega(H)$ for its clique number. We assume that
\begin{equation}\label{eq:star}
|N_H[v]|+\omega\le N+1\qquad\text{for every }v\in\ZZ_N. \tag{$\ast$}
\end{equation}
\end{definition}

Every graph $H$ as in Definition \ref{def:class} is a circular interval
graph in the sense of Chudnovsky and Seymour \cite{CS}, with its maximal
cliques giving the circular interval representation. We do not use the
converse, since a circular interval representation need not have the
prescribed arcs as its maximal cliques. The hypotheses of Definition
\ref{def:class} refer to the chosen labeling of the vertices by $\ZZ_N$. In
other words, we require that there exists some cyclic order in which the
maximal cliques are arcs and \eqref{eq:star} holds, and every statement
below is read with such an order fixed. The conclusions that we draw from
these hypotheses, namely the homotopy types of induced subcomplexes and the
regularities of powers, depend only on the isomorphism class of $H$.

The assumption $N\ge2$ excludes only the graph $H=K_1$, which satisfies
\eqref{eq:star} but has $\omega=N$. The graphs
$C_n^m$, whose maximal cliques for $n\ge3m+1$ are the arcs of $m+1$
consecutive vertices by Lemma \ref{lem:windows} below, satisfy $\omega=m+1$
and $|N_H[v]|=2m+1$. Hence \eqref{eq:star} becomes $3m+2\le n+1$, that is,
$n\ge3m+1$, and the standing assumption for $C_n^m$ is exactly
\eqref{eq:star}. Figure \ref{fig:g92} shows the graphs $C_9^2$ and
$G_{9,2}$.

\begin{figure}[ht]
\centering
\begin{tikzpicture}[scale=1.35]
\begin{scope}
\foreach \i in {0,...,8}{
  \coordinate (a\i) at ({90-40*\i}:1.25);
}
\foreach \i in {0,...,8}{
  \pgfmathtruncatemacro{\j}{mod(\i+1,9)}
  \pgfmathtruncatemacro{\k}{mod(\i+2,9)}
  \draw[thick] (a\i) -- (a\j);
  \draw[thick] (a\i) -- (a\k);
}
\foreach \i in {0,...,8}{
  \fill (a\i) circle (1.6pt);
  \node[font=\scriptsize] at ({90-40*\i}:1.48) {$\i$};
}
\node[font=\small] at (0,-1.85) {$C_9^2$};
\end{scope}
\begin{scope}[xshift=4.2cm]
\foreach \i in {0,...,8}{
  \coordinate (b\i) at ({90-40*\i}:1.25);
}
\foreach \i in {0,...,8}{
  \pgfmathtruncatemacro{\j}{mod(\i+3,9)}
  \pgfmathtruncatemacro{\k}{mod(\i+4,9)}
  \draw[thick] (b\i) -- (b\j);
  \draw[thick] (b\i) -- (b\k);
}
\foreach \i in {0,...,8}{
  \fill (b\i) circle (1.6pt);
  \node[font=\scriptsize] at ({90-40*\i}:1.48) {$\i$};
}
\node[font=\small] at (0,-1.85) {$G_{9,2}=\overline{C_9^2}$};
\end{scope}
\end{tikzpicture}
\caption{The square of the $9$-cycle and its complement. Vertices at cyclic
distance at most $2$ are adjacent on the left, and vertices at cyclic distance
at least $3$ are adjacent on the right.}
\label{fig:g92}
\end{figure}

Let $W=\{w_0,\dots,w_{s-1}\}\subseteq\ZZ_N$ be nonempty, with its vertices in
cyclic order. For $t\in\ZZ_s$ denote by $[w_t,w_{t+1}]$ the arc traversed from
$w_t$ to $w_{t+1}$ in the positive direction. When $s=1$, this arc is all of
$\ZZ_N$. The \emph{gap} $g_t$ of the $t$-th pair is the number of steps from
$w_t$ to $w_{t+1}$ in the positive direction. Thus $g_t\in\{1,\dots,N\}$ and
$\sum_{t\in\ZZ_s}g_t=N$. We call the $t$-th link \emph{covered} if
$[w_t,w_{t+1}]$ is a clique of $H$, and we set
\[
c(W)=\#\{t:\text{the $t$-th link is not covered}\}.
\]
A covered link necessarily joins adjacent vertices, since the forward arc
contains both of its endpoints. When $H=C_n^m$, a link is covered exactly
when its gap is at most $m$, so $c(W)$ counts the cyclic gaps of $W$
exceeding $m$. Figure \ref{fig:cw}
shows an example with two uncovered links.

\begin{figure}[ht]
\centering
\begin{tikzpicture}[scale=1.5]
\foreach \i in {0,...,11}{
  \coordinate (v\i) at ({90-30*\i}:1.25);
}
\foreach \i in {2,4,5,6,9,10}{
  \fill[gray!50] (v\i) circle (1.1pt);
  \node[font=\scriptsize,gray!70] at ({90-30*\i}:1.02) {$\i$};
}
\foreach \i in {0,1,3,7,8,11}{
  \fill (v\i) circle (2pt);
  \node[font=\scriptsize] at ({90-30*\i}:1.02) {$\i$};
}
\draw[very thick] ({90-30*11}:1.7) arc ({90-30*11}:{90-30*12}:1.7);
\draw[very thick] (90:1.7) arc (90:{90-30*1}:1.7);
\draw[very thick] ({90-30*1}:1.7) arc ({90-30*1}:{90-30*3}:1.7);
\draw[very thick] ({90-30*7}:1.7) arc ({90-30*7}:{90-30*8}:1.7);
\draw[dashed] ({90-30*3}:1.45) arc ({90-30*3}:{90-30*7}:1.45);
\draw[dashed] ({90-30*8}:1.45) arc ({90-30*8}:{90-30*11}:1.45);
\end{tikzpicture}
\caption{The set $W=\{0,1,3,7,8,11\}$ in $\ZZ_{12}$ with $m=2$. The four
covered links (solid outer arcs) have gaps at most $2$, the two uncovered
links (dashed arcs) have gaps $4$ and $3$, so $c(W)=2$, and $W$ splits into
the runs $\{11,0,1,3\}$ and $\{7,8\}$.}
\label{fig:cw}
\end{figure}

We begin with the following well-known lemma, which describes the effect of
removing a single vertex from a simplicial complex. We shall use it
repeatedly.

\begin{lemma}\label{lem:deletion}
Let $\Delta$ be a simplicial complex and let $v$ be a vertex of $\Delta$.
Write $\Delta\setminus v$ for the induced subcomplex on the remaining
vertices. Then
\[
\Delta=\bigl(v*\lk_\Delta(v)\bigr)\cup(\Delta\setminus v),
\qquad
\bigl(v*\lk_\Delta(v)\bigr)\cap(\Delta\setminus v)=\lk_\Delta(v).
\]
Consequently, the following statements hold.
\begin{enumerate}[label=\textup{(\arabic*)}]
\item If $\lk_\Delta(v)$ is contractible, then
$\Delta\simeq\Delta\setminus v$.
\item If $\lk_\Delta(v)\simeq S^0$ and $\Delta\setminus v$ is contractible,
then $\Delta\simeq S^1$.
\end{enumerate}
\end{lemma}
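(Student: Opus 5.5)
The plan is to verify the set-theoretic decomposition directly from the definitions, and then deduce (1) and (2) from standard gluing facts about homotopy types.

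\emph{The decomposition.} Let $\sigma\in\Delta$. If $v\notin\sigma$, then $\sigma$ lies on the remaining vertices, so $\sigma\in\Delta\setminus v$. If $v\in\sigma$, put $\tau=\sigma\setminus\{v\}$. Then $\tau\cap\{v\}=\emptyset$ and $\tau\cup\{v\}=\sigma\in\Delta$, so $\tau\in\lk_\Delta(v)$ and $\sigma\in v*\lk_\Delta(v)$. The reverse inclusion holds because $v*\lk_\Delta(v)\subseteq\Delta$ by the definition of the link. For the intersection, a face of $v*\lk_\Delta(v)$ avoiding $v$ is a face of $\lk_\Delta(v)$. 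Conversely, every face of $\lk_\Delta(v)$ avoids $v$ and lies in $\Delta$, hence lies in both pieces.

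\emph{Part (1).} The cone $v*\lk_\Delta(v)$ is contractible. If $\lk_\Delta(v)$ is contractible, then $\Delta$ is obtained from $\Delta\setminus v$ by gluing a contractible complex along a contractible subcomplex. The inclusion $\lk_\Delta(v)\hookrightarrow v*\lk_\Delta(v)$ is a homotopy equivalence and a cofibration, since these are CW pairs. The gluing lemma, applied to the pushout, then shows that $\Delta\setminus v\hookrightarrow\Delta$ is a homotopy equivalence. Equivalently, a subcomplex inclusion that is a homotopy equivalence can be collapsed: $\Delta\simeq\Delta/(v*\lk_\Delta(v))$, and likewise $\Delta\setminus v\simeq(\Delta\setminus v)/\lk_\Delta(v)$. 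These two quotients are the same space.

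\emph{Part (2).} Since $\Delta\setminus v$ is contractible, $\Delta\simeq\Delta/(\Delta\setminus v)$. This quotient equals the cone $v*\lk_\Delta(v)$ with its base $\lk_\Delta(v)$ collapsed to a point, which is the suspension $\Sigma\lk_\Delta(v)$. Suspension preserves homotopy type for CW complexes, so $\Sigma\lk_\Delta(v)\simeq\Sigma S^0=S^1$.

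The only step needing care is the homotopy invariance of gluing and collapsing. This follows from the homotopy extension property of CW pairs, or equivalently from the standard fact that a homotopy pushout along cofibrations is invariant under homotopy equivalences. I would simply cite this, for instance Hatcher, Proposition 0.17 and its consequences. No real obstacle is expected, since the lemma is classical.
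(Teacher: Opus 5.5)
Your proof is correct. The decomposition and part (1) match the paper, which also applies the gluing lemma to the inclusion $\Delta\setminus v\hookrightarrow\Delta$. Your alternative for (1) is also valid: collapse the contractible subcomplexes $v*\lk_\Delta(v)$ and $\lk_\Delta(v)$, and observe that $|\Delta|/|v*\lk_\Delta(v)|\cong|\Delta\setminus v|/|\lk_\Delta(v)|$.

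Part (2) is where you take a different route. You collapse the contractible subcomplex $\Delta\setminus v$ and identify the quotient $(v*\lk_\Delta(v))/\lk_\Delta(v)$ with the unreduced suspension. The paper never takes a quotient. It realizes the suspension as two cones $c_\pm*\lk_\Delta(v)$ glued along the link. It maps $c_+*\lk_\Delta(v)$ isomorphically onto $v*\lk_\Delta(v)$, and extends the null-homotopic inclusion $\lk_\Delta(v)\hookrightarrow\Delta\setminus v$ over $c_-*\lk_\Delta(v)$. It then applies the gluing lemma a second time to conclude that this map is a homotopy equivalence.

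Your route is shorter. It rests on one standard fact (a CW pair with contractible subcomplex collapses up to homotopy, Hatcher's Proposition 0.17) together with $CX/X\cong SX$ for nonempty compact $X$. The identification with the suspension needs $\lk_\Delta(v)\neq\emptyset$, which holds since $\lk_\Delta(v)\simeq S^0$. The paper's route uses a single tool, the gluing lemma, for both parts. It also produces an explicit map from the suspension into $\Delta$ rather than a zigzag through a quotient space.
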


\begin{proof}
Note that a face of $\Delta$ containing $v$ lies in $v*\lk_\Delta(v)$, and a
face of $\Delta$ not containing $v$ lies in $\Delta\setminus v$. This proves
the displayed decomposition, and the displayed intersection follows directly
from the definitions. Throughout this proof, we write $|\cdot|$ for geometric
realizations. In the rest of the paper, we omit the bars. We use the gluing
lemma (see \cite{Bj}). It states that if a continuous map between the
geometric realizations of two complexes, each written as the union of two
subcomplexes, carries pieces to pieces, and if its restrictions to the two
pieces and to their intersections are homotopy equivalences, then the map
itself is a homotopy equivalence. Note that all decompositions used below
are by subcomplexes. Hence the gluing lemma applies to them. Note also that
the cone $v*\lk_\Delta(v)$ is contractible.

(1) Consider the inclusion map $\varphi\colon\Delta\setminus v\to\Delta$.
Write $\Delta\setminus v=\lk_\Delta(v)\cup(\Delta\setminus v)$ and
$\Delta=\bigl(v*\lk_\Delta(v)\bigr)\cup(\Delta\setminus v)$. Then the
intersection of the two pieces is $\lk_\Delta(v)$ on both sides, and
$\varphi$ carries pieces to pieces. The restriction
$\lk_\Delta(v)\to v*\lk_\Delta(v)$ is a map between contractible complexes.
Hence it is a homotopy equivalence. The other two restrictions are identity
maps. Therefore, by the gluing lemma, $\varphi$ is a homotopy equivalence.

(2) Realize the unreduced suspension of the link as the simplicial
complex
\[
\operatorname{susp}\lk_\Delta(v)
=\bigl(c_+*\lk_\Delta(v)\bigr)\cup\bigl(c_-*\lk_\Delta(v)\bigr),
\]
that is, as two cones glued along $\lk_\Delta(v)$. We define a continuous map
$\psi\colon|\operatorname{susp}\lk_\Delta(v)|\to|\Delta|$ carrying pieces
to pieces as follows. On $|c_+*\lk_\Delta(v)|$, let $\psi$ be the realization
of the simplicial isomorphism $c_+*\lk_\Delta(v)\to v*\lk_\Delta(v)$ which
sends $c_+$ to $v$ and fixes $\lk_\Delta(v)$. Then $\psi$ is a homeomorphism
onto $|v*\lk_\Delta(v)|$ on this piece. Since $|\Delta\setminus v|$ is
contractible, the inclusion
$|\lk_\Delta(v)|\hookrightarrow|\Delta\setminus v|$ is null-homotopic.
Hence it extends to a continuous map on the cone. On $|c_-*\lk_\Delta(v)|$,
let $\psi$ be such an extension. The two definitions agree on
$|\lk_\Delta(v)|$. Therefore, $\psi$ is well defined and continuous. The
restrictions of $\psi$ to the two cones are maps between contractible
spaces. Hence they are homotopy equivalences. Moreover, the restriction of
$\psi$ to the intersections is the identity map of $|\lk_\Delta(v)|$.
Therefore, by the gluing lemma, we get
$|\Delta|\simeq|\operatorname{susp}\lk_\Delta(v)|$. Since
$\lk_\Delta(v)\simeq S^0$ and unreduced suspension preserves homotopy
equivalences, we conclude that $\Delta\simeq\operatorname{susp}S^0=S^1$.
\end{proof}

\begin{lemma}\label{lem:interval}
Let $\Gamma$ be a graph on a finite linearly ordered vertex set
$u_1<\cdots<u_q$ with the property that, whenever $u_i$ and $u_j$ are adjacent
with $i<j$, the set $\{u_i,u_{i+1},\dots,u_j\}$ is a clique of $\Gamma$. If
$\Gamma$ is connected, then $\Cl(\Gamma)$ is contractible.
\end{lemma}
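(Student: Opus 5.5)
We argue by induction on $q$, using Lemma \ref{lem:deletion}(1) to delete the largest vertex $u_q$. If $q=1$ the complex is a point and there is nothing to prove. Suppose $q\ge2$ and put $\Gamma'=\Gamma\setminus u_q$, the graph induced on $u_1<\cdots<u_{q-1}$. The interval property passes to $\Gamma'$ unchanged: if $u_i,u_j$ with $i<j\le q-1$ are adjacent, then $\{u_i,\dots,u_j\}$ is a clique of $\Gamma$ avoiding $u_q$, hence a clique of $\Gamma'$. Moreover $\Cl(\Gamma)\setminus u_q=\Cl(\Gamma')$, because clique complexes are flag and induced subcomplexes correspond to induced subgraphs.

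The first step is to describe the link of $u_q$. Since $\Gamma$ is connected and $q\ge2$, the vertex $u_q$ has at least one neighbour. Let $i$ be the smallest index with $u_i$ adjacent to $u_q$. The hypothesis says $\{u_i,\dots,u_q\}$ is a clique, so the neighbourhood of $u_q$ is exactly $\{u_i,\dots,u_{q-1}\}$, and this set is itself a clique. In a flag complex the link of a vertex is the clique complex of its neighbourhood. Hence $\lk_{\Cl(\Gamma)}(u_q)$ is the full simplex on $\{u_i,\dots,u_{q-1}\}$. This is a nonempty simplex, so it is contractible. By Lemma \ref{lem:deletion}(1), $\Cl(\Gamma)\simeq\Cl(\Gamma')$.

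The second step is to check that $\Gamma'$ is connected, so that the induction hypothesis applies. This is the only point needing care, since deleting a vertex can in general disconnect a graph. Take an edge $\{u_a,u_b\}$ of $\Gamma$ with $a<b$. The whole interval $\{u_a,\dots,u_b\}$ is a clique, so consecutive vertices $u_j,u_{j+1}$ with $a\le j<b$ are adjacent. Since $\Gamma$ is connected, every pair $u_j,u_{j+1}$ lies inside the interval of some edge: a path from $u_1$ to $u_q$ must cross the gap between $j$ and $j+1$. Therefore $u_j\sim u_{j+1}$ for all $j<q$. In particular the path $u_1,u_2,\dots,u_{q-1}$ lies in $\Gamma'$, so $\Gamma'$ is connected.

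By induction, $\Cl(\Gamma')$ is contractible, and hence so is $\Cl(\Gamma)$. I expect no step to be a serious obstacle. The only points to verify carefully are the connectivity of $\Gamma'$, handled by the consecutive-adjacency observation above, and the identification of the link as a simplex, which follows from the flag property.
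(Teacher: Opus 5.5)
Your proposal is correct and follows essentially the same route as the paper: induction on $q$, consecutive adjacency forced by connectivity and the interval property, a vertex whose link is a nonempty simplex, and then Lemma \ref{lem:deletion}(1). The only difference is cosmetic: you delete $u_q$, using the smallest neighbour index, whereas the paper deletes $u_1$, using $f=\max\{j:u_1u_j\in E(\Gamma)\}$.
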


\begin{proof}
We proceed by induction on $q$. If $q=1$, then $\Cl(\Gamma)$ is a single
point, and the assertion is obvious.
Assume that $q\ge2$. We first show that consecutive vertices are adjacent.
Suppose that $u_iu_{i+1}\notin E(\Gamma)$ for some $i$. If
$u_au_b\in E(\Gamma)$ for some $a\le i<b$, then $\{u_a,\dots,u_b\}$ is a
clique of $\Gamma$. In particular, $u_i$ and $u_{i+1}$ are adjacent, which
is a contradiction. Hence no edge of $\Gamma$ joins $\{u_1,\dots,u_i\}$ to
$\{u_{i+1},\dots,u_q\}$. This contradicts the assumption that $\Gamma$ is
connected. Therefore, consecutive vertices are adjacent.

Set $f=\max\{j:u_1u_j\in E(\Gamma)\}$. Since consecutive vertices are
adjacent, we have $u_1u_2\in E(\Gamma)$. Hence $f$ is well defined and
$f\ge2$. Applying the hypothesis to the edge $u_1u_f$, we get that
$\{u_1,\dots,u_f\}$ is a clique. Hence $u_2,\dots,u_f$ are neighbors of
$u_1$. Moreover, by the maximality of $f$, the vertex $u_1$ has no other
neighbors. Therefore, the neighbors of $u_1$ are exactly $u_2,\dots,u_f$,
and they span a clique. This
implies that $\lk_{\Cl(\Gamma)}(u_1)$ is a nonempty simplex. Therefore, by
Lemma \ref{lem:deletion}(1), we get $\Cl(\Gamma)\simeq\Cl(\Gamma-u_1)$.
Note that the graph $\Gamma-u_1$, with the induced order
$u_2<\cdots<u_q$, satisfies the hypothesis of the lemma.
Moreover, $\Gamma-u_1$ is connected, since $u_2,\dots,u_q$ remain
consecutively adjacent. Now the assertion follows from the induction
hypothesis.
\end{proof}

\begin{lemma}\label{lem:arcrep}
Let $N\ge2$, suppose that every maximal clique of $H$ is an arc, and assume
\eqref{eq:star}. Then the following statements hold.
\begin{enumerate}[label=\textup{(\arabic*)}]
\item We have $N\ge2\omega-1$ and $\omega<N$. Every edge $uv$ belongs to a
clique of $H$ that is an arc, and in particular one of the two arcs with
endpoints $u,v$ is a clique.
\item For every $v\in\ZZ_N$ the closed neighborhood $N_H[v]$ is an arc, and
$|N_H[v]|\le N-\omega+1$.
\item Let $J$ be an arc with $|J|\le N-\omega+1$, written in its induced linear
order as $u_1<\cdots<u_p$. Then $H[J]$ is an indifference graph, that is,
there exist integers $f(1)\le\cdots\le f(p)$ such that, for $i<j$,
\[
u_iu_j\in E(H)\quad\Longleftrightarrow\quad j\le f(i).
\]
Consequently, $H[J]$ and all its induced subgraphs are chordal, and if
$J'\subseteq J$ is such that $H[J']$ is connected, then $\Cl(H[J'])$ is
contractible.
\end{enumerate}
\end{lemma}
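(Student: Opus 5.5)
We prove the three parts in order, since (2) uses (1) and (3) uses (2).

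For (1), we start from a maximum clique $Q$. It is an arc of size $\omega$. Write it as $v_1<\cdots<v_\omega$ in the arc order and take the middle vertex $v$ with $r=\lceil\omega/2\rceil$.

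The key observation is that neighborhoods must extend beyond cliques. Let $Q'$ be any maximal clique containing $v$. Then $Q'$ is an arc containing $v$ and of size at most $\omega$. If $N_H[v]=Q$, then \eqref{eq:star} gives $\omega+\omega\le N+1$, so $N\ge2\omega-1$. The general case needs care, so we argue more simply using only $|N_H[v]|\ge\omega$ for $v\in Q$. This gives $2\omega\le|N_H[v]|+\omega\le N+1$, hence $N\ge2\omega-1$.

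Then $\omega<N$: if $\omega=N$, we would get $N\ge2N-1$, so $N\le1$, contradicting $N\ge2$. For the edge statement, $\{u,v\}$ is a clique and so lies in some maximal clique. That clique is an arc containing $u$ and $v$, so it contains one of the two arcs between them, and subsets of cliques are cliques.

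For (2), $N_H[v]$ is the union of the maximal cliques containing $v$. Each of these is an arc through $v$. A union of arcs all containing a common point $v$ is either an arc or all of $\ZZ_N$. It cannot be all of $\ZZ_N$, because \eqref{eq:star} gives $|N_H[v]|\le N+1-\omega<N$. The bound is exactly \eqref{eq:star}.

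For (3), set $f(i)=\max\{j\ge i: u_iu_j\in E(H)\text{ or }j=i\}$. We must show two things: that $u_i$ is adjacent to $u_j$ with $i<j\le f(i)$ exactly when $j\le f(i)$, and that $f$ is monotone.

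\textbf{Main obstacle.} The difficulty is that the clique arc joining $u_i,u_{f(i)}$ might go around the other way, outside $J$. Suppose $u_iu_j$ is an edge with $i<j$ in $J$. By (1), either the arc inside $J$ from $u_i$ to $u_j$ is a clique, or the complementary arc is. The complementary arc has size $N-(j-i)+1$, and it contains $\ZZ_N\setminus J$ together with $u_1,\dots,u_i$ and $u_j,\dots,u_p$. Since it is a clique, its size is at most $\omega$, so $N-(j-i)+1\le\omega$. This gives $j-i\ge N-\omega+1\ge p$, contradicting $j-i\le p-1$. Hence the inner arc $\{u_i,\dots,u_j\}$ is a clique.

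With this in hand, adjacency to $u_{f(i)}$ forces adjacency to all of $u_{i+1},\dots,u_{f(i)}$, which gives the equivalence. For monotonicity, suppose $i<i'$ and $f(i)>i'$. Then the clique $\{u_i,\dots,u_{f(i)}\}$ shows $u_{i'}u_{f(i)}$ is an edge, so $f(i')\ge f(i)$. If instead $f(i)\le i'$, then $f(i)\le i'\le f(i')$. Either way $f$ is nondecreasing.

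Now $H[J]$, and every induced subgraph of it, satisfies the hypothesis of Lemma~\ref{lem:interval} in the induced linear order. It is also chordal: the order $u_p,u_{p-1},\dots,u_1$ is a perfect elimination ordering, since the earlier neighbors of $u_i$ in that order, namely the $u_j$ with $j<i$, form an interval ending at $u_i$, and that interval is a clique by the same property.

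Finally, if $J'\subseteq J$ and $H[J']$ is connected, Lemma~\ref{lem:interval} gives that $\Cl(H[J'])$ is contractible.
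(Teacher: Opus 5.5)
Your proof follows the paper's argument essentially step for step: the bound $2\omega\le|N_H[v]|+\omega\le N+1$ for (1), $N_H[v]$ as the union of the clique-arcs through $v$ for (2), and for (3) the same inner/outer arc size comparison and the same $f$, with your explicit perfect elimination ordering $u_p,\dots,u_1$ replacing the paper's citation that indifference graphs are chordal. The one slip is in (2): the inequality $N+1-\omega<N$ needs $\omega\ge2$, so the case $\omega=1$ must be handled separately, as the paper does (there $H$ is edgeless and $N_H[v]=\{v\}$, which is a proper arc since $N\ge2$).
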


\begin{proof}
(1) Choose a vertex $v$ in a clique of size $\omega$. Then
$|N_H[v]|\ge\omega$. Hence, by condition \eqref{eq:star} applied to $v$, we
get $2\omega\le|N_H[v]|+\omega\le N+1$. Therefore, $N\ge2\omega-1$. Since
$N\ge2$, this implies that $\omega\le(N+1)/2<N$. Note that every edge
belongs to a maximal clique, and every maximal clique is an arc. Such an arc
contains one of the two arcs joining the endpoints of the edge. Since a
subset of a clique is a clique, one of the two arcs with endpoints $u,v$ is
a clique.

(2) If $\omega=1$, then $H$ has no edges and $N_H[v]=\{v\}$, which is an arc
satisfying the bound. Assume that $\omega\ge2$. We claim that the closed
neighborhood $N_H[v]$ is the union of the maximal cliques containing $v$.
Indeed, $v$ lies in some maximal clique, every edge at $v$ lies in a maximal
clique containing $v$, and every clique containing $v$ lies in $N_H[v]$.
This proves the claim. These maximal cliques are arcs through the common
vertex $v$. Hence their union is either an arc or all of $\ZZ_N$. By
\eqref{eq:star}, we have $|N_H[v]|\le N-\omega+1$. Since $\omega\ge2$, this
gives $|N_H[v]|\le N-1$. This rules out the second possibility.

(3) Let $u,w\in J$ be adjacent. By part (1), there is an arc $A$ which is a
clique of $H$ and contains both $u$ and $w$. Since $A$ is an arc containing
$u$ and $w$, it contains one of the two arcs of $\ZZ_N$ that join $u$ to
$w$. Note that one of these two arcs lies inside $J$, since $J$ is an arc
containing $u$ and $w$. We call it the inner arc, and we call the other one
the outer arc. Since the two arcs together cover $\ZZ_N$ and the inner arc
lies inside $J$, the outer arc contains every vertex of
$\ZZ_N\setminus J$, together with $u$ and $w$ themselves. Hence it has at
least $N-|J|+2$ vertices. By hypothesis, $|J|\le N-\omega+1$. Therefore,
$N-|J|+2\ge\omega+1$. On the other hand, since $A$ is a clique, we have
$|A|\le\omega$. Hence $A$ does not contain the outer arc. Therefore, $A$
contains the inner arc. Since the inner arc is contained in the clique
$A$, the vertices of $J$ lying between $u$ and $w$ lie in $A$. Hence they
are pairwise adjacent and adjacent to both $u$ and $w$.

Define $f(i)=\max\bigl(\{j:u_iu_j\in E(H)\}\cup\{i\}\bigr)$. Then
$f(i)\ge i$ for every $i$. Moreover, if $f(i)>i$, then
$u_iu_{f(i)}\in E(H)$ by the definition of $f$. Applying the previous
paragraph to the pair $u_i,u_{f(i)}$, we get that $u_iu_j\in E(H)$ for
every $j$ with $i<j\le f(i)$. Conversely, if $u_iu_j\in E(H)$ with $i<j$,
then $j\le f(i)$ by the definition of $f$. This is the asserted
description of the edges. It remains to show that $f$ is nondecreasing.
Let $i<i'$. If $f(i)\le i'$, then $f(i)\le i'\le f(i')$ and there is
nothing to prove. Assume that $f(i)>i'$. Then $u_{i'}$ lies strictly
between $u_i$ and $u_{f(i)}$ in the linear order of $J$, and $u_i$ and
$u_{f(i)}$ are adjacent. Hence, by the previous paragraph, we get
$u_{i'}u_{f(i)}\in E(H)$. This implies that $f(i')\ge f(i)$. Therefore,
$H[J]$ is an indifference graph. It is known that indifference graphs are
chordal. Moreover, the defining property is inherited by induced
subgraphs. Hence the induced subgraphs of $H[J]$ are chordal as well.

Finally, by the first paragraph of this part, whenever two vertices of $J$
are adjacent, these two vertices together with the vertices of $J$ lying
between them form a clique. Hence $H[J]$ satisfies the hypothesis of Lemma
\ref{lem:interval} in its linear order. Since a subset of a clique is a
clique, this hypothesis passes to induced subgraphs on subsets of $J$.
Therefore, for every $J'\subseteq J$ such that $H[J']$ is connected, it
follows from Lemma \ref{lem:interval} that $\Cl(H[J'])$ is contractible.
\end{proof}

We are now ready to prove the main result of this section.

\begin{theorem}\label{thm:dichotomy}
Let $N\ge2$, suppose that every maximal clique of $H$ is an arc, and assume
\eqref{eq:star}. For every nonempty $W\subseteq\ZZ_N$, the complex $\Cl(H)[W]$
is homotopy equivalent to a disjoint union of $c(W)$ contractible complexes
when $c(W)\ge1$, and to $S^1$ when $c(W)=0$.
\end{theorem}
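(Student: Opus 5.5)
We split into the cases $c(W)\ge1$ and $c(W)=0$. In both cases we work with the runs of $W$: cutting the cyclic sequence $w_0,\dots,w_{s-1}$ at the uncovered links gives $c(W)$ maximal blocks of cyclically consecutive elements of $W$ in which every internal link is covered. For a run $B$, let $J_B$ be the arc from its first to its last element.

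\textbf{Case $c(W)\ge1$.} We first show that each $\Cl(H)[B]$ is contractible. Consecutive elements of $B$ are adjacent, because covered links join adjacent vertices, so $H[B]$ is connected. Lemma \ref{lem:arcrep}(3) would then give contractibility, provided $|J_B|\le N-\omega+1$. This length bound is the main obstacle, and we plan to get it as follows. The uncovered link leaving the last element of $B$ is not a clique. Its arc contains the whole complement of $J_B$ together with its two endpoints. If $|J_B|\ge N-\omega+2$, this arc would have at most $\omega$ vertices. That alone does not force it to be a clique. So we instead argue vertex by vertex inside $J_B$: take $b\in B$ and its neighborhood $N_H[b]$, which is an arc of size at most $N-\omega+1$ by Lemma \ref{lem:arcrep}(2). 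We expect a more robust route to be the following. Because the runs are separated by uncovered links, there are no edges between different runs that would create cycles through the complement, and we can apply Lemma \ref{lem:deletion}(1) repeatedly. At each step we delete an endpoint vertex of a run whose link in the current complex is a simplex, namely the clique of its neighbours lying on the covered side, as in the proof of Lemma \ref{lem:interval}.

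Second, we must show that there are no edges of $H$ between different runs, or, more precisely, that the components of $\Cl(H)[W]$ are exactly the runs. Suppose $u\in B$ and $v\in B'$ are adjacent with $B\ne B'$. By Lemma \ref{lem:arcrep}(1), one of the two arcs joining $u$ and $v$ is a clique. Both arcs contain a full uncovered link, since each passes between the two runs. A sub-arc of a clique is a clique, so that uncovered link would be covered, a contradiction. Hence there are no such edges. With this, the first step can be done using cliques inside each run: every clique of $H$ on $W$ lies in a run, and the deletion argument on the endpoint vertex $u_1$ of a run works because its neighbours within the run form a forward arc intersected with $W$. Indeed, any neighbour $w$ of $u_1$ in the run is joined to $u_1$ by a clique arc, and that arc cannot be the one through the uncovered link. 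Iterating the deletion contracts each run to a point, giving $c(W)$ contractible components.

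\textbf{Case $c(W)=0$.} Every link is covered, so $s\ge2$, using $\omega<N$ when $s=1$. Pick the vertex $v=w_0$. We claim that $\lk(v)$ in $\Cl(H)[W]$ is the clique complex of $N_H(v)\cap W$, and that this splits into a forward part and a backward part. The neighbourhood $N_H[v]$ is an arc of length at most $N-\omega+1$, so $H$ restricted to it is an indifference graph by Lemma \ref{lem:arcrep}(3), with $v$ in the interior. We need no edge between the forward and backward neighbours: such an edge together with $v$ would require a clique arc either through $v$, which makes forward and backward neighbours adjacent only if they lie within $\omega$ of each other, or around the far side. We will use \eqref{eq:star} to rule out the far side and to show that the forward and backward neighbour sets are nonempty (by coveredness) cliques not adjacent to each other. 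The latter is the delicate point and may require choosing $v$ cleverly, for instance a vertex whose forward clique is maximal. This gives $\lk(v)\simeq S^0$.

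Finally, $W\setminus v$ has exactly one uncovered-type cut at $v$, so it forms a single run. By the argument of the first case, applied with the cut at $v$, $\Cl(H)[W\setminus v]$ is contractible. Lemma \ref{lem:deletion}(2) then gives $S^1$.
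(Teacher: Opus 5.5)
Your treatment of the case $c(W)\ge1$ is sound and is essentially the paper's argument. Both arcs joining vertices of different runs contain the forward arc of an uncovered link, so there are no edges between runs. The same observation inside a run shows that an adjacent pair spans a clique together with everything between them in the cut order. The endpoint deletion of Lemma \ref{lem:interval}, which you could simply invoke, then contracts each run. You were right to abandon the length bound on $J_B$, which fails in general.

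The case $c(W)=0$, however, has a genuine gap. Your plan needs a vertex $v=w_t\in W$ whose backward and forward neighbours in $W$ are mutually nonadjacent. Equivalently, the merged arc $[w_{t-1},w_{t+1}]$ is not a clique. This is also exactly what you need for $W\setminus v$ to acquire an uncovered link, so that the first case applies. Such a vertex need not exist, and no clever choice of $v$ can produce one. For $H=C_n^m$ with $m\ge2$ and $W=\ZZ_n$, every arc $[t-1,t+1]$ is a clique, so $t-1\sim t+1$. The link of every vertex is then the staircase ball $\Sigma_m$, which is contractible rather than $S^0$. By rotational symmetry all vertices behave alike, so your construction cannot start.

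The missing idea is a reduction by strong induction on $|W|$. Call $t$ bridgeable if $[w_{t-1},w_{t+1}]$ is a clique.
\begin{itemize}
\item For bridgeable $t$, consecutive elements of $N_W(w_t)$ are still adjacent, since the only new consecutive pair is $w_{t-1},w_{t+1}$. So $H[N_W(w_t)]$ is a connected induced subgraph on a subset of the arc $N_H[w_t]$, and Lemma \ref{lem:arcrep}(3) makes $\lk(w_t)$ contractible.
\item Lemma \ref{lem:deletion}(1) then lets you delete $w_t$. The merged link is covered, so $c(W\setminus w_t)=0$ and the induction hypothesis applies.
\item Only when no index is bridgeable does your argument go through. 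A clique-arc through $v$ containing some $x\in L$ and $y\in R$ would contain $[w_{t-1},w_{t+1}]$. The far arc has at least $N-|N_H[v]|+2\ge\omega+1$ vertices by \eqref{eq:star}. Hence $\lk(v)\simeq S^0$, and $W\setminus v$ is a single run.
\end{itemize}
The induction is well founded because $c(W)=0$ forces every gap to be at most $\omega-1$, hence $|W|\ge N/(\omega-1)$. You also need $|W|\ge3$, not merely $|W|\ge2$, so that $w_{t-1}\ne w_{t+1}$ and $L,R$ are both nonempty and distinct. This follows from $N\ge2\omega-1$ in Lemma \ref{lem:arcrep}(1), which gives $N/(\omega-1)>2$.
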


\begin{proof}
Suppose first that $c(W)\ge1$. Cutting at the uncovered links decomposes $W$
into maximal runs $W_1,\dots,W_{c(W)}$. If $c(W)=1$, then there is a single
run, equal to $W$ itself. We claim that vertices $x$ and $y$ belonging to
different runs, which requires $c(W)\ge2$, are nonadjacent. Note that each of
the two arcs of $\ZZ_N$ joining $x$ to $y$ contains a pair of consecutive
elements $w_t,w_{t+1}$ of $W$ whose link is uncovered, together with all of
$\ZZ_N$ between them. Hence each of these two arcs contains the forward arc
$[w_t,w_{t+1}]$ of an uncovered link. Suppose that $x$ and $y$ are adjacent.
By Lemma \ref{lem:arcrep}(1), there is a clique of $H$ which is an arc and
contains $x$ and $y$. We call such a clique a \emph{clique-arc}. This
clique-arc contains one of the two arcs joining $x$ to $y$. Hence it
contains the forward arc of an uncovered link. Since a subset of a clique
is a clique, the forward arc of an uncovered link is a clique, which is a
contradiction. This proves the claim.

Consider one run $W_r=\{u_0<\cdots<u_p\}$, in the linear order obtained
after cutting. Assume that $u_a\sim u_b$ with $a<b$. By Lemma
\ref{lem:arcrep}(1), there is a clique-arc containing $u_a$ and $u_b$, and
it contains one of the two arcs joining $u_a$ to $u_b$. As in the previous
paragraph, the arc that passes across a cut contains the forward arc of an
uncovered link. Hence it cannot be contained in a clique. Therefore, the
clique-arc contains the arc from $u_a$ to $u_b$ inside the run. In
particular, the vertices of $W_r$ between $u_a$ and $u_b$ form together
with $u_a$ and $u_b$ a clique. This shows that $H[W_r]$ satisfies the
hypothesis of Lemma \ref{lem:interval} in the cut order. Moreover, the
forward arc of a covered link is a clique containing its two endpoints.
Hence consecutive vertices of $W_r$ are adjacent, and $H[W_r]$ is
connected. It follows from Lemma \ref{lem:interval} that
$\Cl(H)[W_r]$ is contractible. By the claim, every clique of $H$ contained
in $W$ lies in a single run. Hence
\[
\Cl(H)[W]=\bigsqcup_{r=1}^{c(W)}\Cl(H)[W_r],
\]
and the first assertion follows.

Now assume that $c(W)=0$. We first show that $\omega\ge2$. Suppose that
$\omega=1$. Then $H$ has no edges. If $|W|\ge2$, then every forward arc
containing at least two vertices is uncovered. If $|W|=1$, then the forward
arc is $\ZZ_N$, which is not a clique, since $\omega<N$ by Lemma
\ref{lem:arcrep}(1). In both cases we get $c(W)\ge1$, which is a
contradiction. Therefore, $\omega\ge2$.

Since $c(W)=0$, every forward arc $[w_t,w_{t+1}]$ is a clique. Hence it has
at most $\omega$ vertices. This implies that the gaps satisfy
$g_t\le\omega-1$. Set $s=|W|$. Then $N=\sum_tg_t\le s(\omega-1)$.
Therefore,
\[
s\ge s_0=\left\lceil\frac{N}{\omega-1}\right\rceil .
\]
Moreover, since $N\ge2\omega-1$ by Lemma \ref{lem:arcrep}(1), we have
$N/(\omega-1)>2$. Hence $s_0\ge3$. In particular, $w_{t-1}$, $w_t$ and
$w_{t+1}$ are pairwise distinct for every $t$.

Next, we show that $W$ is not a clique. Suppose that $W$ is a clique. Then
$W$ lies in a maximal clique, which is a clique-arc $A$. Consider the
forward arc from the last vertex of $W$ in $A$ back to the first. Note that
these two vertices are consecutive in the cyclic order of $W$. Hence this
arc is the forward arc of a link of $W$. It contains $\ZZ_N\setminus A$
together with its two endpoints. Since $A$ is a clique, we have
$|A|\le\omega$. Hence the arc has at least $N-\omega+2$ vertices. By Lemma
\ref{lem:arcrep}(1), we have $N\ge2\omega-1$. Therefore, the arc has at
least $\omega+1$ vertices. This implies that it is not a clique, which
contradicts the assumption $c(W)=0$. Hence $W$ is not a clique.

Call an index $t$ \emph{bridgeable} if $[w_{t-1},w_{t+1}]$ is a clique. For
each $t$, write $v=w_t$ and set $N_W(v)=(W\cap N_H[v])\setminus\{v\}$. By
Lemma \ref{lem:arcrep}(2), the set $N_W(v)$ consists of consecutive vertices
of $W$ with $v$ removed, lying in the arc $N_H[v]$. We prove, by induction
on $s$, that $\Cl(H)[W]\simeq S^1$ for every $W$ with $c(W)=0$ and $|W|=s$.
By the bound above, every such $W$ satisfies $s\ge s_0$. The bridgeable case
below reduces $s$ by one and uses the induction hypothesis, and we show that
it cannot occur at $s=s_0$. The unbridgeable case yields $S^1$ directly,
without the induction hypothesis. In particular, it settles the base case
$s=s_0$.

Case 1: Suppose that some $t$ is bridgeable, and fix such a $t$. Since the
links $[w_{t-1},w_t]$ and $[w_t,w_{t+1}]$ are covered, their forward arcs
are cliques containing $v$. Hence $w_{t-1},w_{t+1}\in N_W(v)$, and
$N_W(v)$ is nonempty. We claim that consecutive elements of $N_W(v)$ are
adjacent. For pairs that remain consecutive in $W$, this is clear, since
the forward arc of a covered link is a clique containing its two
endpoints. The only new consecutive pair is $w_{t-1},w_{t+1}$, and this
pair is adjacent because $t$ is bridgeable. This proves the claim.
Therefore, $H[N_W(v)]$ is connected. Since $|N_H[v]|\le N-\omega+1$ by
Lemma \ref{lem:arcrep}(2), Lemma \ref{lem:arcrep}(3) applies to the arc
$N_H[v]$ and shows that $\lk(v)$ is contractible. Hence, by Lemma
\ref{lem:deletion}(1), we get
$\Cl(H)[W]\simeq\Cl(H)[W\setminus\{v\}]$. Note that deleting $v$ replaces
the two adjacent links by the covered link $[w_{t-1},w_{t+1}]$. Hence
$c(W\setminus\{v\})=0$, and the induction hypothesis applies. Moreover, this
case cannot occur when $s=s_0$. Indeed, if it did, then the deletion would
produce a set with $c=0$ and fewer than $N/(\omega-1)$ vertices. This
contradicts the inequality $N\le s(\omega-1)$ established above.

Case 2: Suppose that no index is bridgeable. Fix $t$ and again write
$v=w_t$. Split $N_W(v)=L\sqcup R$ into the vertices preceding and following
$v$ in the arc $N_H[v]$. Since the links $[w_{t-1},w_t]$ and
$[w_t,w_{t+1}]$ are covered, we have $w_{t-1}\in L$ and $w_{t+1}\in R$.
Hence both sets are nonempty. We claim that there is no edge between $L$
and $R$. Suppose that $x\in L$ and $y\in R$ are adjacent. By Lemma
\ref{lem:arcrep}(1), there is a clique-arc containing $x$ and $y$, and it
contains one of the two arcs joining $x$ to $y$. One of these two arcs
passes through $v$ and lies inside the arc $N_H[v]$. Since $w_{t-1}$ is the
last element of $W$ before $v$ and $w_{t+1}$ is the first element of $W$
after $v$, the arc through $v$ contains $[w_{t-1},w_{t+1}]$. Hence, if the
clique-arc contains the arc through $v$, then, since a subset of a clique
is a clique, $[w_{t-1},w_{t+1}]$ is a clique. This implies that $t$ is
bridgeable, which is a contradiction. Otherwise, the clique-arc contains
the other arc. The other arc contains $\ZZ_N\setminus N_H[v]$ together
with $x$ and $y$. By Lemma \ref{lem:arcrep}(2), we have
$|N_H[v]|\le N-\omega+1$. Hence the clique-arc has at least
$N-|N_H[v]|+2\ge\omega+1$ vertices. This is impossible, since a clique has
at most $\omega$ vertices, which is again a contradiction. This proves the
claim.

Note that consecutive elements of $L$ are consecutive in $W$. Since
$c(W)=0$, their links are covered. Hence they are adjacent, and $H[L]$ is
connected. The same argument applies to $R$. Therefore, by Lemma
\ref{lem:arcrep}(3) applied to the arc $N_H[v]$, we get
$\lk(v)=\Cl(H[L])\sqcup\Cl(H[R])\simeq S^0$.
Observe that deleting $v$ creates exactly one uncovered link, namely the
merged link $[w_{t-1},w_{t+1}]$. Hence, by the first part of the proof,
$\Cl(H)[W\setminus\{v\}]$ is contractible. Since $\lk(v)\simeq S^0$, we
conclude from Lemma \ref{lem:deletion}(2) that $\Cl(H)[W]\simeq S^1$.
\end{proof}

We now specialize to $H=C_n^m$, where $m$ and $n$ are integers with $m\ge1$
and $n\ge3m+1$. For the rest of the paper we
let $V=\ZZ_n$ with the cyclic distance
$d(x,y)=\min\{(x-y)\bmod n,\ (y-x)\bmod n\}$. For $x\in\ZZ_n$, the closed
$m$-ball centered at $x$ is
\[
\ball{x}=\{y\in\ZZ_n:d(x,y)\le m\},
\]
and it has $2m+1$ elements, since $n\ge2m+1$. An arc of $m+1$ consecutive
vertices, that is, a set of the form $\{i,i+1,\dots,i+m\}$ with $i\in\ZZ_n$,
is called a \emph{window}. Thus $x\sim y$ in $C_n^m$ if
and only if
$y\in\ball{x}\setminus\{x\}$, whereas $x\sim y$ in $\Gnm$ if and only if
$d(x,y)\ge m+1$. We write $\Dnm=\Ind(\Gnm)=\Cl(C_n^m)$.

\begin{lemma}\label{lem:windows}
Let $n\ge3m+1$. The facets of $\Dnm$ are the $n$ windows $\{i,\dots,i+m\}$,
$i\in\ZZ_n$, and $\dim\Dnm=m$.
\end{lemma}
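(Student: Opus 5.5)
The plan is to show two things directly: every window is a face of $\Dnm$, and every face of $\Dnm$ lies in some window. The dimension statement then follows at once, since windows have $m+1$ elements. Recall that $\Dnm=\Cl(C_n^m)$, so the faces are the sets of pairwise adjacent vertices of $C_n^m$, that is, the sets in which any two elements have cyclic distance at most $m$.

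\emph{Windows are faces.} Two elements of $\{i,\dots,i+m\}$ differ by at most $m$ in the forward direction, so their cyclic distance is at most $m$. Hence every window is a clique of $C_n^m$. Distinct windows are distinct sets of the same size, so none contains another. Once we know that every clique lies in a window, it follows that the windows are exactly the maximal faces and that there are $n$ of them, one for each $i\in\ZZ_n$ (they are distinct because $n>m+1$).

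\emph{Every clique lies in a window.} This is the main step. Let $F$ be a clique with $|F|\ge2$, and list its elements in cyclic order, $F=\{w_0,\dots,w_{s-1}\}$, with gaps $g_t$ summing to $n$. Adjacency of $w_t$ and $w_{t+1}$ gives $\min(g_t,n-g_t)\le m$.

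First, I claim that at most one gap exceeds $m$. If two gaps $g_a,g_b>m$, then their complements satisfy $n-g_a\le m$ and $n-g_b\le m$. Since $g_a+g_b\le n$, this gives $g_a+g_b\ge 2n-2m$, and hence $n\le 2m$, contradicting $n\ge3m+1$.

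Next, suppose every gap is at most $m$. Choose any $x\in F$. The elements of $F$ lie in $\ball{x}$, and $\ball{x}$ is a proper arc because $n\ge2m+1$. Going around the circle from $x$, the cyclic walk through $F$ must leave $\ball{x}$ on both sides, or else $F$ cannot wrap all the way around. I expect this is the delicate point. A cleaner route is to choose the gap-at-most-$m$ condition together with the pairwise-distance condition as follows. Take $x,y\in F$ that are adjacent in the cyclic order, and consider the element $z$ of $F$ furthest forward from $x$ within distance $m$. A cleaner argument altogether is by counting. If all gaps are at most $m$, then $F$ meets every arc of $m$ consecutive vertices. Since $n\ge3m+1$, we can choose $x\in F$ and then find an arc of length $m$ lying entirely outside $\ball{x}$, namely the points at forward distance $m+1,\dots,2m$ from $x$; this arc avoids $\ball{x}$ because $2m<n-m$. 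That arc must contain a point of $F$, which is at distance greater than $m$ from $x$, a contradiction.

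So exactly one gap, say $g_{s-1}$ (from $w_{s-1}$ back to $w_0$), exceeds $m$, and $n-g_{s-1}\le m$. Then $F$ is contained in the forward arc from $w_0$ to $w_{s-1}$. This arc has $n-g_{s-1}+1\le m+1$ vertices, so it lies in the window starting at $w_0$.

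Consequently, every face is contained in a window, the facets are exactly the $n$ windows, and $\dim\Dnm=(m+1)-1=m$. The main obstacle is the case where all gaps are at most $m$. It is disposed of by the arc of $m$ consecutive points at forward distance $m+1,\dots,2m$ from $x$, which exists and avoids $\ball{x}$ precisely because $n\ge3m+1$.
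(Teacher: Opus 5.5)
Your proof is correct. It has the same skeleton as the paper's: windows are cliques, distinct windows are incomparable, and every clique lies in a window. The difference is in that last step. The paper does not prove it but imports it from \cite[Lemma 3.6]{RPA}, whereas you prove it directly by a gap argument.

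Your two-stage analysis also shows exactly where the hypothesis enters. Ruling out two gaps larger than $m$ needs only $n\ge2m+1$. Ruling out a clique all of whose gaps are at most $m$ is where $n\ge3m+1$ is used: the $m$ points at forward distance $m+1,\dots,2m$ from $x$ lie outside $\ball{x}$ and must meet $F$. The bound is sharp, since for $n=3m$ the set $\{0,m,2m\}$ is a clique of $C_n^m$ contained in no window.

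The paper's route is shorter but relies on the external reference. Yours is self-contained and fits the gap bookkeeping used in Section~\ref{sec:dichotomy}.

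When you write it up, delete the abandoned fragments in the middle paragraph: the sentences about the walk leaving $\ball{x}$ on both sides and about the element $z$ furthest forward from $x$. Only the counting argument is needed, and it is complete as stated.
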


\begin{proof}
By \cite[Lemma 3.6]{RPA}, every clique of $C_n^m$ is contained in an arc of
$m+1$ consecutive vertices. Note that each such window is itself a clique.
Indeed, two vertices of a window differ by at most $m$, and since
$n\ge2m+1$, their cyclic distance equals this difference. Moreover, since
all windows have the same number of elements and are pairwise distinct, no
window is contained in another. Hence the windows are precisely the facets,
and $\dim\Dnm=m$.
\end{proof}

\begin{corollary}\label{cor:dichotomy}
Let $n\ge3m+1$ and let $\emptyset\ne W\subseteq\ZZ_n$. If $c(W)\ge1$, then
$\Dnm[W]$ is homotopy equivalent to a disjoint union of $c(W)$ contractible
complexes. If $c(W)=0$, then $\Dnm[W]\simeq S^1$. Consequently,
\[
\dim_\kk\Hred_i(\Dnm[W];\kk)=
\begin{cases}
c(W)-1,& i=0,\ c(W)\ge1,\\
1,& i=1,\ c(W)=0,\\
0,&\text{otherwise.}
\end{cases}
\]
\end{corollary}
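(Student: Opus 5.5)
The plan is to deduce the corollary directly from Theorem \ref{thm:dichotomy} applied to $H=C_n^m$ with $N=n$, and then read off reduced homology.

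\emph{Checking the hypotheses.} By Lemma \ref{lem:windows}, the maximal cliques of $C_n^m$ are the windows $\{i,\dots,i+m\}$, which are arcs. We have $\omega=m+1$. Moreover $|N_H[v]|=|\ball{v}|=2m+1$, since $n\ge2m+1$. So condition \eqref{eq:star} reads $3m+2\le n+1$, which is exactly the standing assumption $n\ge3m+1$. Also $N=n\ge4\ge2$.

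\emph{Identifying $c(W)$.} We must check that the $c(W)$ of the theorem agrees with the description used here, namely the number of cyclic gaps exceeding $m$. A forward arc $[w_t,w_{t+1}]$ with gap $g_t$ has $g_t+1$ vertices.
\begin{itemize}
\item If $g_t\le m$, the arc lies in a window, hence is a clique.
\item If $g_t\ge m+1$, the arc has more than $\omega=m+1$ vertices, so it is not a clique.
\end{itemize}
The case $s=1$, where the forward arc is all of $\ZZ_n$, falls under the second bullet. Hence covered links are exactly those with gap at most $m$. The theorem then gives the two homotopy statements verbatim, and $\Dnm[W]=\Cl(C_n^m)[W]$ by definition.

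\emph{Computing reduced homology.} Reduced homology over $\kk$ is a homotopy invariant. A disjoint union of $c\ge1$ contractible complexes has $\Hred_0$ of dimension $c-1$ and no higher homology. Indeed, each component is acyclic and $\Hred_0$ counts components minus one. For $S^1$, the only nonzero reduced group is $\Hred_1\cong\kk$. This yields the displayed table.

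No step is a real obstacle. The only care needed is confirming that ``covered'' matches ``gap at most $m$'', including the degenerate case $s=1$; this rests on $\omega=m+1$, which Lemma \ref{lem:windows} provides.
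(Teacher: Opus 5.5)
Your proposal is correct and follows the paper's proof: you check that the windows are arcs with $\omega=m+1$ and $|N[v]|=2m+1$, so that \eqref{eq:star} is exactly $n\ge3m+1$; you identify covered links with gaps at most $m$; and you invoke Theorem \ref{thm:dichotomy}. The differences are cosmetic: you rule out long gaps by counting vertices against $\omega=m+1$, where the paper exhibits two vertices at distance $m+1$, and you spell out the reduced homology (note that for $s=1$ the arc has $n$ rather than $g_t+1$ vertices, which is still more than $m+1$).
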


\begin{proof}
By Lemma \ref{lem:windows}, the maximal cliques of $C_n^m$ are arcs and
$\omega(C_n^m)=m+1$. Moreover, $|N_{C_n^m}[v]|=|\ball{v}|=2m+1$ for every
vertex $v$. Hence
$|N_{C_n^m}[v]|+\omega(C_n^m)=3m+2\le n+1$, so \eqref{eq:star} holds. Next,
we claim that a link is covered if and only if its gap is at most $m$. If
the gap is at most $m$, then the forward arc lies in a window. Hence it is
a clique. If the gap exceeds $m$, then the forward arc contains two
vertices at distance $m+1$, which are nonadjacent. Hence it is not a
clique. This proves the claim. Therefore, the assertion follows from
Theorem \ref{thm:dichotomy}.
\end{proof}

For $H=C_n^m$, the case $c(W)=0$ of Corollary \ref{cor:dichotomy}, together
with the vanishing of all positive-dimensional reduced homology in the case
$c(W)\ge1$, was proved by Rather, Pirzada and Aijaz
\cite[Proposition 3.8 and Lemma 3.5]{RPA} using the Nerve Lemma. Corollary
\ref{cor:dichotomy} determines, in addition, the number of connected
components of $\Dnm[W]$ when $c(W)\ge1$, and hence the dimension of
$\Hred_0(\Dnm[W];\kk)$.

\section{The linear strand, depth, and the Buchsbaum property}\label{sec:betti}

In this section, we compute the linear strand of the Betti table of
$R/I(\Gnm)$, and we prove that $R/I(\Gnm)$ is a Buchsbaum ring and that the
independence complex $\Dnm$ is a triangulated $m$-manifold with boundary
for $m\ge2$.

Let $C_m(n,k)$ denote the number of compositions of $n$ into $k$ ordered
parts belonging to $\{1,\dots,m\}$. By inclusion-exclusion over the parts
exceeding $m$,
\[
C_m(n,k)=\sum_{j\ge0}(-1)^j\binom{k}{j}\binom{n-jm-1}{k-1},
\]
where we use the convention that $\binom{a}{b}=0$ whenever $b<0$ or $a<b$.
Thus the sum is finite. The sets $W$ with $|W|=k$ and $c(W)=0$ were
enumerated in \cite[Lemma 3.3]{RPA}:
\begin{equation}\label{eq:zerocount}
\#\{W:|W|=k,\ c(W)=0\}=\frac{n}{k}\,C_m(n,k).
\end{equation}

\begin{lemma}\label{lem:counts}
Let $n\ge3m+1$ and $1\le k\le n$. Then
$\sum_{|W|=k}c(W)=n\binom{n-m-1}{k-1}$.
\end{lemma}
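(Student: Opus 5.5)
We compute $\sum_{|W|=k}c(W)$ by double counting: it equals the number of pairs $(W,t)$ with $|W|=k$ and with the $t$-th link of $W$ uncovered. By Corollary \ref{cor:dichotomy} (more precisely, the claim in its proof), for $H=C_n^m$ a link is uncovered exactly when its gap exceeds $m$. So we must count pairs $(W,w)$ where $w\in W$, $|W|=k$, and the next element of $W$ after $w$ in the positive direction lies at forward distance at least $m+1$ from $w$. When $k=1$ the unique link has gap $n>m$, and this reading still makes sense.

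Fix the starting vertex $w$; there are $n$ choices, and by rotational symmetry each contributes equally. With $w$ fixed, the condition says that $w+1,\dots,w+m$ are not in $W$. The remaining $k-1$ elements of $W$ must therefore be chosen from the $n-1-m$ vertices $w+m+1,\dots,w+n-1=w-1$, and any such choice works. It gives $\binom{n-m-1}{k-1}$ sets, with the convention that this is $0$ when $k-1>n-m-1$, where indeed no such $W$ exists. One also needs that distinct pairs $(W,w)$ correspond to distinct uncovered links. This holds because each link of $W$ is indexed by its starting element $w_t$, so $t\mapsto w_t$ is a bijection between links and elements of $W$. Summing over $w$ gives $n\binom{n-m-1}{k-1}$.

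The only point needing care is the boundary case $k=1$. There the link goes from $w$ back to itself with gap $n\ge 3m+1>m$, so it is uncovered. The count gives $n\binom{n-m-1}{0}=n$, which matches $c(\{w\})=1$ for each of the $n$ singletons. No real obstacle is expected; the main thing to state cleanly is the identification of links with their starting vertices, which makes the double count exact.
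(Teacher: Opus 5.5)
Your proof is correct and uses the same double count of pairs $(W,t)$ with $g_t>m$ as the paper. The paper additionally fixes the gap length $g\in\{m+1,\dots,n-k+1\}$, counts $\binom{n-g-1}{k-2}$ choices for the remaining vertices, sums via the hockey-stick identity and treats $k=1$ separately. Your observation that the condition is simply $w+1,\dots,w+m\notin W$ gives $\binom{n-m-1}{k-1}$ directly and covers $k=1$ uniformly; the paper's sum is just your count split according to the position $w+g$ of the next element.
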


\begin{proof}
Since a link of $W$ is covered exactly when its gap is at most $m$, the sum
$\sum_{|W|=k}c(W)$ equals the number of pairs $(W,t)$ with $|W|=k$ and
$g_t>m$. We count these pairs. If $k=1$, then each singleton has a single
link, of gap $n>m$. Hence there are $n$ such pairs, which agrees with
$n\binom{n-m-1}{0}=n$. Assume that
$k\ge2$. Note that a pair $(W,t)$ is determined by the left endpoint $x$ of
the uncovered gap, its length $g\in\{m+1,\dots,n-k+1\}$, and a choice of the
remaining $k-2$ vertices among the $n-g-1$ vertices strictly between $x+g$
and $x$. Therefore, using the change of variable $h=n-g-1$ and the
hockey-stick identity, we obtain
\[
\sum_{|W|=k}c(W)=n\sum_{g=m+1}^{n-k+1}\binom{n-g-1}{k-2}
=n\sum_{h=k-2}^{n-m-2}\binom{h}{k-2}=n\binom{n-m-1}{k-1}. \qedhere
\]
\end{proof}

Rather, Pirzada and Aijaz obtained the second strand
$\beta_{k-2,k}(R/I)=\frac nkC_m(n,k)$ for $k\ge2$, the extremal Betti number
$\beta_{n-2,n}=1$, and the values $\reg(R/I)=2$ and $\pd(R/I)=n-2$
\cite[Theorems 3.10 and 3.11 and Corollary 3.12]{RPA}, and they determined
the linear strand recursively from the Hilbert series of the independence
complex \cite[Theorem 4.3]{RPA}. The following theorem gives the linear
strand in closed form.

\begin{theorem}\label{thm:linstrand}
Let $n\ge3m+1$ and $I=I(\Gnm)$. Then, for $1\le k\le n$,
\[
\beta_{k-1,\,k}(R/I)=n\binom{n-m-1}{k-1}-\binom{n}{k}+\frac{n}{k}\,C_m(n,k).
\]
Moreover, $\dpt R/I=2$ and $\dim R/I=m+1$.
\end{theorem}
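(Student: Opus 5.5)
We will apply Hochster's formula (Theorem \ref{thm:hochster}) with $\Delta=\Dnm$, whose Stanley--Reisner ideal is $I$. For $i=k-1$ and $j=k$ it gives
\[
\beta_{k-1,k}(R/I)=\sum_{|W|=k}\dim_\kk\Hred_0(\Dnm[W];\kk).
\]
By Corollary \ref{cor:dichotomy}, the summand equals $c(W)-1$ when $c(W)\ge1$ and $0$ when $c(W)=0$. We therefore write it uniformly as $c(W)-1+[c(W)=0]$, which gives
\[
\beta_{k-1,k}=\sum_{|W|=k}c(W)-\binom nk+\#\{W:|W|=k,\ c(W)=0\}.
\]
Lemma \ref{lem:counts} gives the first term, and \eqref{eq:zerocount} gives the last. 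This yields the formula for $1\le k\le n$. The case $k=1$ is consistent: there $c(W)=1$, and both the left side and the correction term vanish since $C_m(n,1)=0$ when $n>m$.

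For the dimension, Lemma \ref{lem:windows} shows $\dim\Dnm=m$, so $\dim R/I=m+1$.

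For the depth, we will use Hochster's formula together with the Auslander--Buchsbaum formula, $\dpt R/I=n-\pd R/I$. It suffices to show $\pd R/I=n-2$; this is already recorded from \cite{RPA}, but we can rederive it from the dichotomy.

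First, $\beta_{i,j}\ne0$ requires some $W$ with $|W|=j$ and $\Hred_{j-i-1}(\Dnm[W])\ne0$. By Corollary \ref{cor:dichotomy}, only $j-i-1\in\{0,1\}$ can occur, so $i\ge j-2\ge$ nothing useful on its own. Combining with $i\le j\le n$, however, we get $i=j-1\le n-1$ or $i=j-2\le n-2$.

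The case $i=n-1$ would require $W=\ZZ_n$ with $\Hred_0\ne0$. But $c(\ZZ_n)=0$, since all gaps equal $1\le m$, so $\Dnm\simeq S^1$ and $\Hred_0(\Dnm)=0$. Hence $\beta_{n-1,j}=0$ for all $j$.

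On the other hand, $\beta_{n-2,n}=\dim\Hred_1(\Dnm)=1$. Hence $\pd R/I=n-2$ and $\dpt R/I=2$.

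The main point needing care is the bookkeeping in the Hochster sum, namely the uniform rewriting of $\dim\Hred_0$ that absorbs the $c(W)=0$ sets. Beyond that, everything follows from the already established dichotomy and the counting lemmas, so we expect no real obstacle.
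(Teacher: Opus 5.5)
Your proposal is correct, and the Betti formula and the dimension are handled exactly as in the paper: Hochster's formula, Corollary~\ref{cor:dichotomy}, the rewriting $\sum_{|W|=k}c(W)-\binom nk+\#\{W:|W|=k,\ c(W)=0\}$, then Lemma~\ref{lem:counts} and \eqref{eq:zerocount}, and Lemma~\ref{lem:windows} for $\dim R/I=m+1$. The only difference is the depth. The paper cites $\pd(R/I)=n-2$ from \cite[Corollary 3.12]{RPA} before applying Auslander--Buchsbaum, whereas you rederive it from Corollary~\ref{cor:dichotomy}: $\beta_{n-1,j}=0$ because $\Hred_0(\Dnm;\kk)=0$, and $\beta_{n-2,n}=\dim\Hred_1(\Dnm;\kk)=1$. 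This is valid and makes the depth claim independent of \cite{RPA}, though you should delete the garbled phrase ``$i\ge j-2\ge$ nothing useful on its own''.
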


\begin{proof}
By Theorem \ref{thm:hochster}, we have
$\beta_{k-1,k}(R/I)=\sum_{|W|=k}\dim_\kk\Hred_0(\Dnm[W];\kk)$. Hence, by
Corollary \ref{cor:dichotomy}, we get
\[
\beta_{k-1,k}(R/I)=\sum_{\substack{|W|=k\\ c(W)\ge1}}\bigl(c(W)-1\bigr)
=\sum_{|W|=k}c(W)-\binom{n}{k}+\#\{W:|W|=k,\ c(W)=0\},
\]
where the second equality holds because the sets with $c(W)=0$ contribute
nothing to the sum $\sum_{|W|=k}c(W)$. Therefore, the formula follows from
Lemma \ref{lem:counts} and
\eqref{eq:zerocount}. By \cite[Corollary 3.12]{RPA}, we have
$\pd(R/I)=n-2$. Therefore, the Auslander--Buchsbaum formula gives
$\dpt R/I=n-\pd(R/I)=2$. Finally, it follows from Lemma \ref{lem:windows}
that $\dim R/I=\dim\Dnm+1=m+1$.
\end{proof}

\begin{example}\label{ex:92}
For $n=9$ and $m=2$, Theorem \ref{thm:linstrand} and
\cite[Theorem 3.10]{RPA} give the Betti table
\[
\begin{array}{r|rrrrrrrr}
 & 0&1&2&3&4&5&6&7\\\hline
0& 1&\cdot&\cdot&\cdot&\cdot&\cdot&\cdot&\cdot\\
1& \cdot&18&51&54&18&\cdot&\cdot&\cdot\\
2& \cdot&\cdot&\cdot&9&30&27&9&1
\end{array}
\]
and therefore $\reg=2$, $\pd=7=n-2$, $\dpt=2$ and $\dim=3$. For $n=7,8$ and
$m=2$ the formulas reproduce the tables of \cite{RPS}, and for $n=12$, $m=3$
they reproduce the table computed from the Hilbert-series recursion in
\cite[Example 4.5]{RPA}. We have also checked Theorem \ref{thm:linstrand}
directly from Hochster's formula for $m=2$ with $n\le12$ and for $m=3$ with
$n\le13$.
\end{example}

We next study the local structure of $\Dnm$. The links of faces of $\Dnm$
are joins of simplices with the staircase complexes introduced in the
following lemma. The complex $\Sigma_3$ is drawn in Figure
\ref{fig:staircase}.

\begin{lemma}\label{lem:staircase}
For $r\ge1$, let $\Sigma_r$ be the complex on
$\ell_1,\dots,\ell_r,\rho_1,\dots,\rho_r$ with facets
$F_p=\{\ell_1,\dots,\ell_p\}\cup\{\rho_1,\dots,\rho_{r-p}\},
\qquad 0\le p\le r.$ Then $F_0,\dots,F_r$ is a shelling. Moreover,
$\Sigma_1=\{\ell_1\}\sqcup\{\rho_1\}\cong S^0$, while for $r\ge2$ the complex
$\Sigma_r$ is a shellable triangulated $(r-1)$-ball, and in particular is
contractible.
\end{lemma}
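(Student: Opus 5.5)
We prove the shelling property directly and then derive the topological conclusions from Theorem \ref{thm:dk}. All facets $F_p$ have $r$ elements, so $\Sigma_r$ is pure of dimension $r-1$.

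\emph{Step 1: the shelling.} Fix $1\le p\le r$ and compute $F_p\cap F_q$ for $q<p$. Since $q<p$, this intersection is $\{\ell_1,\dots,\ell_q\}\cup\{\rho_1,\dots,\rho_{r-p}\}$. It is therefore contained in $F_p\cap F_{p-1}=F_p\setminus\{\ell_p\}$. Hence $F_p\cap\bigcup_{q<p}F_q$ is the single codimension-one face $F_p\setminus\{\ell_p\}$. This face is nonempty as soon as $r\ge2$, and for $r=1$ it is the empty face. So $F_0,\dots,F_r$ is a shelling for $r\ge2$, with nonempty intersections as required in the definition recalled earlier. For $r=1$ the complex consists of the two isolated vertices $\ell_1$ and $\rho_1$, which is $S^0$. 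The statement of the shelling for $r=1$ should be read with this convention, and we will remark on it.

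\emph{Step 2: at most two facets per ridge.} Assume $r\ge2$ and let $G$ be a codimension-one face, so $|G|=r-1$. Suppose $G\subseteq F_p$. Then $G=F_p$ minus one element, namely either $\ell_p$ or $\rho_{r-p}$ (since the $\ell$-part and $\rho$-part of any face of a facet are initial segments, removing an inner element would not give a subset of another facet anyway, but it still lies in $F_p$).

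The cleaner route is to count directly. A set $G\subseteq F_p$ is contained in $F_q$ only if its $\ell$-part has at most $q$ elements and its $\rho$-part has at most $r-q$ elements. Write $a$ for the largest index of an $\ell$ in $G$ and $b$ for the largest index of a $\rho$ in $G$. Then $G\subseteq F_q$ if and only if $a\le q\le r-b$. Since $|G|=r-1$ forces $a+b\ge r-1$, at most two values of $q$ qualify.

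Moreover, the face $F_0\setminus\{\rho_1\}$ has $a=0$ and $b=r$, so it lies only in $F_0$. Concretely, $\{\rho_2,\dots,\rho_r\}$ lies in $F_q$ only when $r-q\ge r$, that is, only when $q=0$. This face is nonempty when $r\ge2$.

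\emph{Step 3: conclusion.} By Theorem \ref{thm:dk}, $\Sigma_r$ is a ball or a sphere, and it is a ball because of the free ridge found in Step 2. Its dimension is $r-1$, and balls are contractible.

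The only delicate point is the ridge count in Step 2. We handle it with the parameters $(a,b)$, which turn the condition $G\subseteq F_q$ into the interval constraint $a\le q\le r-b$, of length at most one.
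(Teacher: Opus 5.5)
Your proof is correct and follows the paper's route: you compute $F_p\cap\bigcup_{q<p}F_q=F_p\cap F_{p-1}=F_p\setminus\{\ell_p\}$ in the same way, use the same free ridge $\{\rho_2,\dots,\rho_r\}$, and apply Theorem \ref{thm:dk}. Two differences are worth noting: your characterization $G\subseteq F_q$ if and only if $a\le q\le r-b$, together with $a+b\ge|G|=r-1$, is a tidier replacement for the paper's case analysis on which vertex is deleted from $F_p$, and you obtain contractibility from the ball rather than from the paper's separate gluing induction. Delete the opening claim of Step 2 that every ridge in $F_p$ is $F_p\setminus\{\ell_p\}$ or $F_p\setminus\{\rho_{r-p}\}$, with initial-segment parts. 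It is false, for instance $F_p\setminus\{\ell_1\}$ with $p\ge2$, and your $(a,b)$ count does not need it.
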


\begin{figure}[ht]
\centering
\begin{tikzpicture}[scale=1.05]
\coordinate (r3) at (0,0);
\coordinate (r2) at (1,1.1);
\coordinate (r1) at (2,0);
\coordinate (l1) at (3,1.1);
\coordinate (l2) at (4,0);
\coordinate (l3) at (5,1.1);
\fill[gray!25] (r3)--(r2)--(r1)--cycle;
\fill[gray!25] (r1)--(l1)--(l2)--cycle;
\draw[thick] (r3)--(r2)--(r1)--(l1)--(l2)--(l3);
\draw[thick] (r3)--(r1)--(l2);
\draw[thick] (r2)--(l1)--(l3);
\foreach \p in {r3,r2,r1,l1,l2,l3}{\fill (\p) circle (1.7pt);}
\node[below,font=\scriptsize] at (r3) {$\rho_3$};
\node[above,font=\scriptsize] at (r2) {$\rho_2$};
\node[below,font=\scriptsize] at (r1) {$\rho_1$};
\node[above,font=\scriptsize] at (l1) {$\ell_1$};
\node[below,font=\scriptsize] at (l2) {$\ell_2$};
\node[above,font=\scriptsize] at (l3) {$\ell_3$};
\node[font=\scriptsize] at (0.95,0.37) {$F_0$};
\node[font=\scriptsize] at (1.97,0.75) {$F_1$};
\node[font=\scriptsize] at (3.0,0.37) {$F_2$};
\node[font=\scriptsize] at (4.02,0.75) {$F_3$};
\end{tikzpicture}
\caption{The staircase complex $\Sigma_3$, a shellable triangulated
$2$-ball. Its four facets $F_0=\{\rho_1,\rho_2,\rho_3\}$,
$F_1=\{\ell_1,\rho_1,\rho_2\}$, $F_2=\{\ell_1,\ell_2,\rho_1\}$ and
$F_3=\{\ell_1,\ell_2,\ell_3\}$ are the consecutive triangles of the strip.}
\label{fig:staircase}
\end{figure}

\begin{proof}
Note that the sets $F_0,\dots,F_r$ are pairwise incomparable. Indeed, for
$q<p$ we have $\ell_p\in F_p\setminus F_q$ and
$\rho_{r-q}\in F_q\setminus F_p$. Hence $F_0,\dots,F_r$ are precisely the
facets of $\Sigma_r$. Each facet $F_p$ has $r$ vertices. Therefore,
$\Sigma_r$ is pure of dimension $r-1$. Let $q<p$. Then
\[
F_p\cap F_q=\{\ell_1,\dots,\ell_q\}\cup\{\rho_1,\dots,\rho_{r-p}\}
\subseteq\{\ell_1,\dots,\ell_{p-1}\}\cup\{\rho_1,\dots,\rho_{r-p}\}
=F_p\cap F_{p-1},
\]
and $|F_p\cap F_{p-1}|=r-1$. Therefore,
$F_p\cap\bigcup_{q<p}F_q=F_p\cap F_{p-1}$. This shows that each new facet
meets the preceding complex in a single codimension-one face. Hence
$F_0,\dots,F_r$ is a shelling.

If $r=1$, then the two facets $F_0=\{\rho_1\}$ and $F_1=\{\ell_1\}$ are
disjoint. Hence $\Sigma_1\cong S^0$. Assume now that $r\ge2$. We show by
induction on $p$ that $\bigcup_{q\le p}F_q$ is contractible. For $p=0$,
this complex is the simplex $F_0$. For $p\ge1$, it is the union of the
contractible complex $\bigcup_{q<p}F_q$ and the simplex $F_p$ along
$F_p\cap F_{p-1}$, which is a simplex on $r-1\ge1$ vertices. Since the
union of two contractible complexes along a nonempty contractible
intersection is contractible, the induction proceeds. In particular,
$\Sigma_r$ is contractible.

It remains to verify the ball condition. Let $G\subset F_p$ with
$|G|=r-1$, and suppose that $G\subseteq F_q$. If $G=F_p\setminus\{\ell_p\}$
with $p\ge1$, then $\{\ell_1,\dots,\ell_{p-1}\}\subseteq F_q$ gives
$q\ge p-1$, and $\{\rho_1,\dots,\rho_{r-p}\}\subseteq F_q$ gives $q\le p$.
Hence $q\in\{p-1,p\}$. Similarly, if $G=F_p\setminus\{\rho_{r-p}\}$ with
$p\le r-1$, then $q\in\{p,p+1\}$. In every other case, $G$ is obtained by
deleting some $\ell_i$ with $i<p$ or some $\rho_i$ with $i<r-p$. In the
first case, $\ell_p\in G$ gives $q\ge p$, and $\rho_{r-p}\in G$ gives
$q\le p$ when $p<r$, while for $p=r$ the inequality $q\le r$ holds
trivially. In the second case, the same argument applies with the roles of
$\ell_p$ and $\rho_{r-p}$ exchanged. Hence $q=p$, and $G$ is contained only
in $F_p$. Therefore, every codimension-one face belongs to at most two
facets. Moreover, $F_0\setminus\{\rho_1\}=\{\rho_2,\dots,\rho_r\}$ is
nonempty, and it contains $\rho_r$, which lies only in $F_0$. Hence this
codimension-one face belongs to only one facet. Therefore, by Theorem
\ref{thm:dk}, we conclude that $\Sigma_r$ is a triangulated $(r-1)$-ball.
\end{proof}

\begin{theorem}\label{thm:buchsbaum}
Let $n\ge3m+1$. Then $R/I(\Gnm)$ is a Buchsbaum ring of dimension $m+1$ and
depth $2$, and it is Cohen--Macaulay if and only if $m=1$. Moreover,
$\Delta_{n,1}=C_n$ is a closed triangulated $1$-manifold, while for $m\ge2$ the
complex $\Dnm$ is a triangulated $m$-manifold with boundary, every vertex
lying on the boundary. When $m=2$ the complex is a M\"obius band if $n$ is
odd and an annulus if $n$ is even, its boundary being the $2$-regular graph
with edges $\{i,i+2\}$.
\end{theorem}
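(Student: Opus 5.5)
The plan is to compute all links of nonempty faces of $\Dnm$ explicitly and then apply Theorem \ref{thm:schenzel}, Lemma \ref{lem:staircase} and Theorem \ref{thm:dk}.

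\emph{Computing links.} Let $F$ be a nonempty face. By Lemma \ref{lem:windows}, $F$ lies in some window. Since $n\ge3m+1$, the set $F$ has a well-defined span: a minimal arc $[a,b]$ containing it, of length $b-a=d\le m$. The facets containing $F$ are the windows $\{i,\dots,i+m\}$ with $b-m\le i\le a$; there are $r+1$ of them, where $r=m-d$. Their union is the arc $[b-m,a+m]$. Write $\ell_j=a+m-j+1$ and $\rho_j=b-m+j-1$ for $1\le j\le r$, so that the $\ell_j$ lie after the span and the $\rho_j$ before it. Also write $D=[a,b]\setminus F$ for the interior vertices of the span missing from $F$.

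The window starting at $a-p$ then meets the complement of $F$ in
\[
D\cup\{\rho_{r-p+1},\dots\}\ldots
\]
After relabeling, this intersection is $D\cup\{\ell_1,\dots,\ell_{r-p}\}\cup\{\rho_1,\dots,\rho_p\}$. The facets of $\lk F$ are exactly these sets, because of the bijection between windows containing $F$ and facets of the link. Hence
\[
\lk_{\Dnm}F\cong \overline{D}*\Sigma_r
\]
when $r\ge1$, where $\overline{D}$ denotes the full simplex on $D$ (the empty complex if $D=\emptyset$). When $r=0$, $F$ spans a window and the link is the simplex on $D$.

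The step I expect to be fiddly is exactly this identification, especially checking that no extra faces appear. Any face of the link lies in a window containing $F$, so it lies inside one of these sets. Everything follows from that.

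\emph{Buchsbaum property and depth.} $\Dnm$ is pure by Lemma \ref{lem:windows}. By the link computation, every link of a nonempty face is a simplex, a cone over $\Sigma_r$, or $\Sigma_r$ itself. The first two are contractible, as is $\Sigma_r$ for $r\ge2$ by Lemma \ref{lem:staircase}. The only remaining case is $\Sigma_1\cong S^0$, which occurs when $D=\emptyset$ and $r=1$, so the link has dimension $0$ and its $\Hred_{-1}$ vanishes. Thus $\Hred_i(\lk F)=0$ for $i<\dim\lk F$, and Theorem \ref{thm:schenzel} gives the Buchsbaum property.

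For the depth, take $W=\ZZ_n$, where $c(W)=0$. Corollary \ref{cor:dichotomy} gives $\Dnm\simeq S^1$. Theorem \ref{thm:schenzel} then yields depth $\min\{m+1,2\}=2$, consistent with Theorem \ref{thm:linstrand}, and the dimension is $m+1$ by that theorem. Cohen--Macaulayness means depth equals dimension, i.e. $m+1=2$, i.e. $m=1$.

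\emph{Manifold structure.} Each link is either a ball or a sphere. In the cases $r\ge2$ and $D$ arbitrary, and $r\le1$ with $D\ne\emptyset$, the link is a ball, being a cone or a join of a simplex with a ball. For $r=1$, $D=\emptyset$, the link $S^0$ sits in a link of dimension $0$, so it is a sphere; for $r=0$ the link is a simplex, hence a ball. For a vertex $F=\{v\}$ we have $d=0$ and $r=m$. The link is then $\Sigma_m$, a ball when $m\ge2$, so every vertex lies on the boundary. When $m=1$ the link is $S^0$ and $\Delta_{n,1}=C_n$ is a circle. Since all links of faces are balls or spheres of the right dimension, $\Dnm$ is a homology (indeed PL) $m$-manifold with boundary, using the standard link criterion for combinatorial manifolds.

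\emph{The case $m=2$.} The boundary edges are the codimension-one faces lying in only one facet. The edge $\{i,i+1\}$ lies in two windows and the edge $\{i,i+2\}$ in only $\{i,i+1,i+2\}$. So the boundary is the graph with edges $\{i,i+2\}$, which is one cycle if $n$ is odd and two cycles if $n$ is even.

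The complex is a strip of $n$ triangles glued cyclically, and $\Dnm\simeq S^1$. It is therefore a compact surface with Euler characteristic $0$ and nonempty boundary, hence an annulus or a M\"obius band. These are distinguished by the number of boundary components: a M\"obius band has one, an annulus two. This gives the parity statement.
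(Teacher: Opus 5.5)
Your proposal is correct and follows the paper's proof essentially step by step. It uses the same identification $\lk_{\Dnm}F\cong\overline{D}*\Sigma_r$ (up to relabeling the $\ell_i,\rho_i$), Schenzel's criterion with $\Dnm\simeq S^1$ from Corollary \ref{cor:dichotomy} for the Buchsbaum property and depth, Lemma \ref{lem:staircase} with the link criterion for the manifold structure, and the Euler characteristic and boundary-count classification for $m=2$.

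Two small points need tidying. First, when $F$ is a facet its link is $\{\emptyset\}$, which is not contractible as you claim for "a simplex", but it has dimension $-1$, so Schenzel's condition is vacuous there. Second, your claim that every window containing $F$ contains the whole span $[a,b]$ needs the paper's one-line reason: the other arc from $b$ back to $a$ has at least $n-(b-a)+1\ge2m+2>m+1$ vertices.
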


\begin{figure}[ht]
\centering
\begin{tikzpicture}[scale=1.05]
\foreach \i in {0,...,8}{
  \pgfmathtruncatemacro{\y}{mod(\i,2)}
  \coordinate (v\i) at (\i,{1.15*\y});
}
\foreach \i in {0,2,4,6}{
  \pgfmathtruncatemacro{\j}{\i+1}
  \pgfmathtruncatemacro{\k}{\i+2}
  \fill[gray!22] (v\i)--(v\j)--(v\k)--cycle;
}
\foreach \i in {1,3,5}{
  \pgfmathtruncatemacro{\j}{\i+1}
  \pgfmathtruncatemacro{\k}{\i+2}
  \fill[gray!40] (v\i)--(v\j)--(v\k)--cycle;
}
\foreach \i in {0,...,6}{
  \pgfmathtruncatemacro{\k}{\i+2}
  \draw[line width=1.4pt] (v\i)--(v\k);
}
\foreach \i in {0,...,7}{
  \pgfmathtruncatemacro{\j}{\i+1}
  \draw[thin] (v\i)--(v\j);
}
\draw[line width=1.1pt,
  decoration={markings, mark=at position 0.55 with {\arrow{stealth}}},
  postaction={decorate}] (v0)--(v1);
\draw[line width=1.1pt,
  decoration={markings, mark=at position 0.55 with {\arrow{stealth}}},
  postaction={decorate}] (v7)--(v8);
\foreach \i in {0,...,8}{\fill (v\i) circle (1.6pt);}
\node[below,font=\scriptsize] at (v0) {$0$};
\node[above,font=\scriptsize] at (v1) {$1$};
\node[below,font=\scriptsize] at (v2) {$2$};
\node[above,font=\scriptsize] at (v3) {$3$};
\node[below,font=\scriptsize] at (v4) {$4$};
\node[above,font=\scriptsize] at (v5) {$5$};
\node[below,font=\scriptsize] at (v6) {$6$};
\node[above,font=\scriptsize] at (v7) {$0$};
\node[below,font=\scriptsize] at (v8) {$1$};
\end{tikzpicture}
\caption{The complex $\Delta_{7,2}$ as a triangulated M\"obius band. The
seven triangles are the windows $\{i,i+1,i+2\}$ of $\ZZ_7$. Identifying the
two arrowed edges, both traversed from $0$ to $1$, closes the strip with a
half twist. The thick horizontal edges $\{i,i+2\}$, each lying in a single
triangle, form the boundary circle.}
\label{fig:mobius}
\end{figure}

\begin{proof}
Let $F$ be a nonempty face of $\Dnm$. By Lemma \ref{lem:windows}, $F$ is
contained in a window. Let $a$ and $b$ be the first and last elements of $F$
in that window, and set $r=m-(b-a)\ge0$. We claim that the windows
containing $F$ are exactly the sets $\{a-i,\dots,a-i+m\}$ with $0\le i\le
r$. Indeed, a window containing $a$ and $b$ contains one of the two arcs
joining them, and the arc from $b$ back to $a$ has $n-(b-a)+1\ge2m+2$
vertices, which exceeds $m+1$. Hence such a window contains the arc from
$a$ to $b$, and in particular contains $F$. Therefore, a window contains
$F$ if and only if it contains $a$ and $b$, that is, if and only if it is
of the form $\{a-i,\dots,a-i+m\}$ with $0\le i\le r$. This proves the
claim. Since every face of $\Dnm$ containing $F$ lies in a window
containing $F$, it lies in their union, the arc $[a-r,\,b+r]$. This arc has
$2m-(b-a)+1\le2m+1<n$ elements. Therefore, it is proper and carries a
linear order. Let $S$ be a set disjoint from $F$ with $S\cup F\in\Dnm$. By
the above, $S$ lies in the arc $[a-r,b+r]$. Moreover, for
$S\subseteq[a-r,b+r]\setminus F$, the set $S\cup F$ is a face of $\Dnm$ if
and only if it lies in a window containing $F$, and in the linear
coordinates of the arc this holds if and only if
$\max(S\cup F)-\min(S\cup F)\le m$. Decompose $S$ as
$S=S^0\sqcup S^-\sqcup S^+$, where $S^0\subseteq(a,b)\setminus F$,
$S^-\subseteq\{a-1,\dots,a-r\}$ and $S^+\subseteq\{b+1,\dots,b+r\}$. Set
$p=\max\{i:a-i\in S^-\}$ and $q=\max\{i:b+i\in S^+\}$, with
$\max\emptyset=0$. Then $\min(S\cup F)=a-p$ and $\max(S\cup F)=b+q$. Hence
the condition reads $(b-a)+p+q\le m$, that is, $p+q\le r$. In particular,
it imposes no restriction on $S^0$. Note that a subset of
$\{\ell_1,\dots,\ell_r,\rho_1,\dots,\rho_r\}$ lies in a facet $F_p$ of
$\Sigma_r$ if and only if its largest occurring indices $p'$ and $q'$
satisfy $p'+q'\le r$, since containment in $F_p$ means precisely that
$p'\le p$ and $q'\le r-p$. Therefore, identifying
$\ell_i=a-i$ and $\rho_i=b+i$, we obtain
\[
\lk_{\Dnm}(F)=\bigl(\text{simplex on }(a,b)\setminus F\bigr)*\Sigma_r,
\qquad \Sigma_0:=\{\emptyset\}.
\]
Suppose first that $r\ge2$. By Lemma \ref{lem:staircase}, $\Sigma_r$ is
contractible. Since a join with a contractible complex is contractible,
$\lk_{\Dnm}(F)$ is contractible as well. If $r=1$ and
$(a,b)\setminus F\ne\emptyset$, then the link has a nonempty simplex as a
join factor. Therefore, it is again contractible. If $r=1$ and
$(a,b)\setminus F=\emptyset$, then $\lk_{\Dnm}(F)=\Sigma_1\cong S^0$, which
has dimension $0$. In this case, Schenzel's condition requires only
$\Hred_{-1}=0$, which holds, since $S^0$ is nonempty. Finally, if $r=0$,
then the link is the simplex on $(a,b)\setminus F$. If this simplex is
nonempty, then the link is contractible. If it equals $\{\emptyset\}$,
which happens if and only if $F$ is a facet, then
$\dim\lk_{\Dnm}(F)=-1$ and the condition is vacuous. Hence the condition
is satisfied in this case as well. In every case, we get
$\Hred_i(\lk_{\Dnm}(F);\kk)=0$ for all $i<\dim\lk_{\Dnm}(F)$.

By Lemma \ref{lem:windows}, the complex $\Dnm$ is pure of dimension $m$.
Therefore, by Theorem \ref{thm:schenzel}, the ring $R/I$ is Buchsbaum. We
apply Corollary \ref{cor:dichotomy} to $W=\ZZ_n$. Every gap of $\ZZ_n$
equals $1\le m$. Hence $c(\ZZ_n)=0$, and we get $\Hred_0(\Dnm;\kk)=0$ and
$\Hred_1(\Dnm;\kk)\ne0$. Therefore, the depth formula of Theorem
\ref{thm:schenzel} gives $\dpt R/I=\min\{m+1,2\}=2$, in agreement with
Theorem \ref{thm:linstrand}. Since $\dim R/I=m+1$, the ring $R/I$ is
Cohen--Macaulay if and only if $m=1$.

We next consider the local topology. Taking $F=\{v\}$, we get $r=m$ and
$(a,b)\setminus F=\emptyset$. Hence $\lk_{\Dnm}(v)=\Sigma_m$. If $m=1$, then
this link is $S^0$. Therefore, $\Delta_{n,1}$ is a closed triangulated
$1$-manifold. In fact, $\Delta_{n,1}=\Cl(C_n)=C_n$. Assume that $m\ge2$. By
Lemma \ref{lem:staircase}, every vertex link is an $(m-1)$-ball. We use here
the link criterion of piecewise-linear topology. It states that a finite
simplicial complex in which the link of every vertex is a piecewise-linear
$(m-1)$-ball or $(m-1)$-sphere is a combinatorial $m$-manifold with
boundary, that its underlying space is a compact topological $m$-manifold
with boundary, and that the boundary is carried by the vertices whose link
is a ball (see \cite[Chapter 2]{RS} for combinatorial manifolds and this
criterion). The balls produced by Theorem \ref{thm:dk} are piecewise-linear, since
shellable balls are piecewise-linear balls \cite{DK}. Hence the links
$\Sigma_m$ are piecewise-linear $(m-1)$-balls, and they satisfy the
hypothesis of the criterion.
Therefore, $\Dnm$ is a combinatorial $m$-manifold with boundary, and every
vertex lies on the boundary.

It remains to identify the surface when $m=2$. The facets are the $n$
triangles $\{i,i+1,i+2\}$. Note that the edge $\{i,i+1\}$ belongs to the
two facets $\{i-1,i,i+1\}$ and $\{i,i+1,i+2\}$, whereas $\{i,i+2\}$ belongs
only to $\{i,i+1,i+2\}$. Since the boundary of a triangulated surface
consists of the edges lying in exactly one triangle, $\partial\Delta_{n,2}$
is the $2$-regular graph with edges $\{i,i+2\}$. This graph is a single
cycle when $n$ is odd and two cycles when $n$ is even, since the map
$i\mapsto i+2$ has one orbit on $\ZZ_n$ in the first case and two orbits in
the second. By Corollary \ref{cor:dichotomy}, we have
$\Delta_{n,2}\simeq S^1$. Hence the surface $\Delta_{n,2}$ is connected
and satisfies $\chi(\Delta_{n,2})=\chi(S^1)=0$. Compact connected surfaces
with boundary are classified by orientability, genus and the number $b$ of
boundary circles \cite[Chapter I]{Ma}, with $\chi=2-2g-b$ in the orientable
case and $\chi=2-g-b$, $g\ge1$, in the nonorientable case. If $\chi=0$ and
$b=1$, then the orientable case would give $2g=1$, which is impossible.
Hence the surface is nonorientable with $g=1$, that is, the M\"obius band.
If $\chi=0$ and $b=2$, then the nonorientable case would give $g=0<1$.
Hence the surface is orientable with $g=0$, that is, the annulus. Figure
\ref{fig:mobius} shows the case $n=7$.
\end{proof}

\section{Regularity of powers}\label{sec:powers}

In this section, we prove that the complement of every graph in the class of
Definition \ref{def:class} is locally linear, and we determine the
regularity of all powers of its edge ideal. We then specialize to $\Gnm$.

\begin{theorem}\label{thm:gen-loclin}
Let $N\ge2$, suppose that every maximal clique of $H$ is an arc, and assume
\eqref{eq:star}. Then $\overline H$ is locally linear.
\end{theorem}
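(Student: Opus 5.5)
We need $\reg(I(\overline H):x_v)\le2$ for every vertex $v$. The plan is to identify this colon ideal explicitly and then reduce to Fröberg's criterion (Theorem \ref{thm:froberg}). For a vertex $v$ of a graph $G$, the colon $(I(G):x_v)$ is generated by the variables $x_u$ with $u\in N_G(v)$ together with $I(G-N_G[v])$. Variables split off in a tensor product and do not change regularity. Also, the isolated vertex $v$ contributes nothing, since $v\notin N_G(v)$ and $v$ has no edges in $G-N_G(v)$. Hence
\[
\reg(I(\overline H):x_v)=\reg I\bigl(\overline H[N_H[v]\setminus\{v\}]\bigr),
\]
because the non-neighbours of $v$ in $\overline H$ are exactly $N_H(v)$. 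If this induced subgraph has no edges, the colon is generated by variables and its regularity is $1\le2$. Otherwise, by Theorem \ref{thm:froberg}, it suffices to show that the complement of $\overline H[N_H(v)]$, namely $H[N_H(v)]$, is chordal.

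Chordality follows from Lemma \ref{lem:arcrep}. By part (2), $J=N_H[v]$ is an arc with $|J|\le N-\omega+1$. By part (3), every induced subgraph of $H[J]$ is chordal, in particular $H[N_H(v)]=H[J\setminus\{v\}]$. This settles every vertex, so $\overline H$ is locally linear.

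There are two degenerate points to check. First, when $\omega=1$, $N_H(v)$ is empty and the colon ideal is generated by variables (or the ideal is the whole ring generated by variables), so its regularity is $1$. Second, the colon formula must be stated correctly when some variables appear as generators: the regularity of $(x_u:u\in A)+I(G')$, with $G'$ on disjoint variables, equals $\max(1,\reg I(G'))$. This follows because the Koszul complex on the variables tensored with the resolution of $I(G')$ gives the Betti numbers of the sum.

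The only real content is the identification of the colon ideal. Once it is in place, the rest is a direct invocation of Lemma \ref{lem:arcrep}(2),(3) and Theorem \ref{thm:froberg}, so I expect no serious obstacle. The main care point is making sure Fröberg's theorem is applied to the graph on exactly the vertex set $N_H(v)$, where complementation is taken, and not on all of $\ZZ_N$.
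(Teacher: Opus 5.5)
Your proposal is correct and follows the paper's proof essentially step for step. Both arguments use the same decomposition $(I(\overline H):x_v)=(x_u:u\in N_{\overline H}(v))+I(\overline H[N_H(v)])$ in disjoint sets of variables, reduce the regularity to that of the edge-ideal summand, and get chordality of $H[N_H(v)]$ from Lemma \ref{lem:arcrep}(2),(3) together with Theorem \ref{thm:froberg}. The only extra bookkeeping in the paper is an explicit check that $N_{\overline H}(v)\neq\emptyset$, which guarantees the colon ideal is nonzero.
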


\begin{proof}
Fix $v\in\ZZ_N$. If $\omega\ge2$, then Lemma \ref{lem:arcrep}(2) gives
$|N_H[v]|\le N-\omega+1\le N-1$. If $\omega=1$, then $H$ has no edges and
$N_H[v]=\{v\}$. In both cases, some vertex is nonadjacent to $v$ in $H$.
Hence $N_{\overline H}(v)\ne\emptyset$. Set
$K=\overline H-N_{\overline H}[v]$, that is, the induced subgraph of
$\overline H$ on the vertex set $N_H(v)$. We claim that
\[
(I(\overline H):x_v)=(x_u:u\in N_{\overline H}(v))+I(K).
\]
Indeed, for $u\in N_{\overline H}(v)$ we have $x_ux_v\in I(\overline H)$.
Hence $x_u$ lies in the colon ideal, and so does
$I(K)\subseteq I(\overline H)$. Conversely, let $w$ be a monomial with
$wx_v\in I(\overline H)$. Then $x_ax_b$ divides $wx_v$ for some edge
$\{a,b\}$ of $\overline H$. If $v\in\{a,b\}$, say $b=v$, then
$a\in N_{\overline H}(v)$ and $x_a$ divides $w$. If $v\notin\{a,b\}$, then
$x_ax_b$ divides $w$. In this case, either $\{a,b\}$ meets
$N_{\overline H}(v)$, and $w$ lies in the left summand, or
$a,b\in N_H(v)$, and $\{a,b\}$ is an edge of $K$. This proves the claim.
Note that the two summands involve disjoint sets of variables. If $K$ has
no edges, then the colon ideal is generated by the nonempty set of
variables on the left. Hence $\reg(I(\overline H):x_v)=1$. Assume that $K$
has an edge. Since the two summands lie in disjoint sets of variables,
$R/(I(\overline H):x_v)$ is the tensor product of a polynomial ring modulo
variables with a polynomial ring modulo $I(K)$. Since regularity is
additive on tensor products of $\kk$-algebras, this implies that
$\reg(I(\overline H):x_v)=\reg I(K)$. Note that the complement of $K$ on its
vertex set is $H[N_H(v)]$, which is an induced subgraph of $H[N_H[v]]$. By
parts (2) and (3) of Lemma \ref{lem:arcrep}, the graph $H[N_H[v]]$ is
chordal. Since chordality passes to induced subgraphs, $H[N_H(v)]$ is
chordal as well. Therefore, by Theorem \ref{thm:froberg}, we get
$\reg I(K)=2$. In every case, $\reg(I(\overline H):x_v)\le2$. Hence
$\overline H$ is locally linear.
\end{proof}

\begin{corollary}\label{cor:loclin}
Let $n\ge3m+1$. Then $\Gnm$ is locally linear.
\end{corollary}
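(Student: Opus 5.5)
This is a direct specialization of Theorem \ref{thm:gen-loclin} to $H=C_n^m$, and the proof consists of checking that $C_n^m$ belongs to the class of Definition \ref{def:class}.

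We take $N=n$ and $H=C_n^m$ on $\ZZ_n$. Since $m\ge1$ and $n\ge3m+1\ge4$, the condition $N\ge2$ holds. By Lemma \ref{lem:windows}, the maximal cliques of $C_n^m$ are the windows $\{i,\dots,i+m\}$. These are arcs, and they give $\omega(C_n^m)=m+1$.

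Every closed neighborhood is the ball $\ball{v}$, which has exactly $2m+1$ elements because $n\ge2m+1$. Hence
\[
|N_H[v]|+\omega=3m+2\le n+1,
\]
so \eqref{eq:star} holds. This is the same verification already carried out at the start of the proof of Corollary \ref{cor:dichotomy}.

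Theorem \ref{thm:gen-loclin} now applies and shows that $\overline{C_n^m}=\Gnm$ is locally linear. No step is a real obstacle: all the work lies in Theorem \ref{thm:gen-loclin} and Lemma \ref{lem:windows}, and the only point to confirm is that the hypotheses of Definition \ref{def:class} are met, which the computation above does.
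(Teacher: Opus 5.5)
Your proposal is correct and follows essentially the same route as the paper: you check, via Lemma \ref{lem:windows} and the count $|\ball{v}|=2m+1$, that $C_n^m$ satisfies the hypotheses of Definition \ref{def:class} including \eqref{eq:star}, and then apply Theorem \ref{thm:gen-loclin}. Your explicit check that $N=n\ge2$ is a small addition the paper leaves implicit.
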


\begin{proof}
By Lemma \ref{lem:windows}, the maximal cliques of $C_n^m$ are arcs, and
\eqref{eq:star} holds, as shown in the proof of Corollary
\ref{cor:dichotomy}. Hence the assertion follows from Theorem
\ref{thm:gen-loclin} applied to $H=C_n^m$.
\end{proof}

\begin{theorem}\label{thm:gen-powers}
Let $N\ge2$, suppose that every maximal clique of $H$ is an arc, and assume
\eqref{eq:star}. Put $I=I(\overline H)$. Then $\reg(R/I)\le2$,
$\indm(\overline H)\le2$, and
\[
\reg(I^k)=2k+\indm(\overline H)-1\qquad\text{for every }k\ge2.
\]
In particular, some $I^k$ with $k\ge2$ has a linear resolution if and only if
$\overline H$ is gap-free, and in that case every $I^k$ with $k\ge2$ has one.
\end{theorem}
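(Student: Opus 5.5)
The first step is to show $\reg(R/I)\le2$. By Hochster's formula (Theorem \ref{thm:hochster}), with $\Delta=\Ind(\overline H)=\Cl(H)$, we have $\beta_{i,j}(R/I)\neq0$ only if $\Hred_{j-i-1}(\Cl(H)[W];\kk)\ne0$ for some $W$ with $|W|=j$. By Theorem \ref{thm:dichotomy}, every induced subcomplex is homotopy equivalent either to a disjoint union of contractible pieces or to $S^1$, so reduced homology occurs only in degrees $0$ and $1$. Hence $j-i\le2$ whenever $\beta_{i,j}\ne0$, which gives $\reg(R/I)\le2$, that is, $\reg I\le3$. If $H$ is complete on $\ZZ_N$ this would clash with $\omega<N$ from Lemma \ref{lem:arcrep}(1), so $\overline H$ has at least one edge and $I\ne0$.

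Next comes $\indm(\overline H)\le2$. Katzman's bound (Theorem \ref{thm:katzman}) gives $\indm(\overline H)\le\reg(R/I)\le2$ immediately.

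For the powers, locally linearity of $\overline H$ is Theorem \ref{thm:gen-loclin}, so Theorem \ref{thm:bbh} yields $\reg(I^k)\le2k+\reg I-2\le2k+1$ for all $k$. The lower bound $\reg(I^k)\ge2k+\indm(\overline H)-1$ comes from Theorem \ref{thm:bht}. We split into cases.
\begin{enumerate}[label=\textup{(\alph*)}]
\item If $\indm(\overline H)=2$, the lower bound $2k+1$ matches the upper bound, so $\reg(I^k)=2k+1$, for every $k\ge1$ in fact.
\item If $\indm(\overline H)=1$, then $\overline H$ is gap-free and locally linear, and the second part of Theorem \ref{thm:bbh} gives $\reg(I^k)=2k=2k+\indm-1$ for $k\ge2$.
\end{enumerate}
Since $\overline H$ has an edge, $\indm\ge1$, so these two cases are exhaustive.

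Finally, $I^k$ is generated in degree $2k$, so it has a linear resolution if and only if $\reg(I^k)=2k$. For $k\ge2$ this holds if and only if $\indm(\overline H)=1$, that is, if and only if $\overline H$ is gap-free, and in that case it holds for every $k\ge2$.

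There is no serious obstacle, since everything is assembled from earlier results. The only care needed is in the first step: translating the dichotomy into vanishing of $\Hred_i$ for $i\ge2$ via Hochster's formula, and checking nondegeneracy ($I\neq0$) so that Theorems \ref{thm:bbh} and \ref{thm:bht} apply.
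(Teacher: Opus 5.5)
Your proof is correct and follows the paper's argument. It uses Hochster's formula with Theorem \ref{thm:dichotomy} to get $\reg(R/I)\le2$, Katzman to get $\indm(\overline H)\le2$, and local linearity with Theorems \ref{thm:bbh} and \ref{thm:bht} for the powers. The one difference is that you split on $\indm(\overline H)\in\{1,2\}$ rather than on $\reg(R/I)\in\{1,2\}$. This lets you skip the paper's separate case $\reg(R/I)=1$, which the paper handles via Fr\"oberg and Herzog--Hibi--Zheng. That case has $\indm(\overline H)=1$, so the gap-free part of Theorem \ref{thm:bbh} already covers it for $k\ge2$.
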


\begin{proof}
By Theorem \ref{thm:dichotomy}, every induced subcomplex of
$\Ind(\overline H)=\Cl(H)$ on a nonempty vertex set is homotopy equivalent
to a disjoint union of contractible complexes or to $S^1$. Hence its
reduced homology is concentrated in degrees $0$ and $1$. The empty vertex
set contributes only $\beta_{0,0}(R/I)=1$. Therefore, by Theorem
\ref{thm:hochster}, apart from $\beta_{0,0}(R/I)=1$, we have
$\beta_{i,j}(R/I)\ne0$ only if $j-i-1\in\{0,1\}$, that is, only if
$j\in\{i+1,i+2\}$. This implies that
$\reg(R/I)\le2$. Hence, by Theorem \ref{thm:katzman}, we get
$\indm(\overline H)\le2$. Moreover, by Theorem \ref{thm:gen-loclin},
$\overline H$ is locally linear. Note that $H$ is not complete, since
$\omega<N$ by Lemma \ref{lem:arcrep}(1). Hence $\overline H$ has an edge.
This implies that $I\ne0$ and $\indm(\overline H)\ge1$. Since $I$ is
generated in degree two, we have $\beta_{1,2}(R/I)\ne0$. Therefore,
$\reg(R/I)\ge1$, and we conclude that $\reg(R/I)\in\{1,2\}$.

Case 1: Suppose that $\reg(R/I)=1$. Then $\reg I=2$. Hence, by Theorem
\ref{thm:froberg}, $H$ is chordal. Since $I$ is generated in degree two and
has a linear resolution, it follows from \cite{HHZ} that all powers of $I$
have linear resolutions. Since $I^k$ is generated in degree $2k$, this
gives $\reg(I^k)=2k$ for every $k\ge1$. Moreover, by Theorem
\ref{thm:katzman}, we get
$\indm(\overline H)\le\reg(R/I)=1$. Hence $\indm(\overline H)=1$ and
$\reg(I^k)=2k=2k+\indm(\overline H)-1$.

Case 2: Suppose that $\reg(R/I)=2$. Then $\reg I=3$. Since $\overline H$
is locally linear, the first part of Theorem \ref{thm:bbh} gives
$\reg(I^k)\le2k+\reg I-2=2k+1$. If $\indm(\overline H)=2$, then Theorem
\ref{thm:bht} gives the lower bound
$\reg(I^k)\ge2k+\indm(\overline H)-1=2k+1$. Hence $\reg(I^k)=2k+1$. If
$\indm(\overline H)=1$, then $\overline H$ is gap-free. Since $\overline H$
is also locally linear, the second part of Theorem \ref{thm:bbh} gives
$\reg(I^k)=2k$ for $k\ge2$. In both cases,
$\reg(I^k)=2k+\indm(\overline H)-1$.

Finally, the same formula shows that $\reg(I^k)=2k$ if and only if
$\indm(\overline H)=1$, that is, if and only if $\overline H$ is gap-free.
\end{proof}

For $\Gnm$ the induced matching number was computed in \cite{RPA}.

\begin{theorem}[{\cite[Proposition 3.1 and Corollary 3.13]{RPA}}]\label{thm:ind}
Let $n\ge3m+1$. Then
\[
\indm(\Gnm)=
\begin{cases}
2,&3m+1\le n\le4m,\\
1,&n\ge4m+1.
\end{cases}
\]
\end{theorem}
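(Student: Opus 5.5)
The statement is cited from \cite{RPA}, but it can also be proved directly, and the plan is as follows. In $\Gnm$, vertices $x,y$ are adjacent exactly when $d(x,y)\ge m+1$. By Theorem \ref{thm:gen-powers} we already know $\indm(\Gnm)\le2$. Since $\Gnm$ has an edge, $\indm\ge1$. So it suffices to decide, in each range, whether an induced matching of size two exists.

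For $n\ge4m+1$ I will show $\Gnm$ is gap-free. Take two disjoint edges $\{a,b\}$ and $\{c,d\}$, so $d(a,b)\ge m+1$ and $d(c,d)\ge m+1$. I need one of the four cross pairs to be at cyclic distance $\ge m+1$. Suppose instead that all four cross distances are $\le m$. Then $c$ and $d$ both lie in $\ball{a}\cap\ball{b}$. Since $d(a,b)\ge m+1$, this intersection is contained in the two short arcs between $a$ and $b$. A counting argument on the circle of length $n$ then forces $d(c,d)\le$ something at most $m$. Concretely, I place $a=0$ and $b=g$ with $m+1\le g\le n/2$. Then $\ball{0}\cap\ball{g}$ is contained in $[g-m,m]\cup[n-m,\,g+m]$, the second arc interpreted cyclically. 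When $n\ge4m+1$ the second arc is empty, because $g+m<n-m$ whenever $g\le n/2$ and $n\ge 4m+1$. Actually I should check this carefully: $g+m\ge n-m$ means $g\ge n-2m>n/2$ when $n>4m$, so it fails. Hence $c,d\in[g-m,m]$, an arc with at most $2m-g+1\le m$ vertices. That gives $d(c,d)\le m-1$, a contradiction. So $\indm=1$.

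For $3m+1\le n\le4m$ I will exhibit an explicit induced matching with two edges. Take $g=n-2m$, which lies in $[m+1,2m]$, and the edges $\{0,g\}$ and $\{c,d\}$ with $c=m$ and $d=n-m$. I need to check:
\begin{itemize}
\item $d(0,g)=\min(g,n-g)=g\ge m+1$, since $n-g=2m\ge g$;
\item $d(m,n-m)=\min(n-2m,2m)\ge m+1$;
\item all four cross distances are $\le m$: $d(0,m)=m$, $d(0,n-m)=m$, $d(g,m)=g-m\le m$, and $d(g,n-m)=|n-m-g|=m$.
\end{itemize}
The four vertices must also be distinct, which holds since $0<m<g<n-m$ because $g=n-2m>m$. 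So $\indm\ge2$, and with the upper bound, $\indm=2$.

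The main obstacle is the bookkeeping in the gap-free case: getting the intersection of two $m$-balls right on $\ZZ_n$, including the wrap-around arc, and confirming that it is exactly the condition $n\ge4m+1$ that kills the second arc. I will handle this by the normalization $a=0$, $b=g\le n/2$ (using the symmetry $x\mapsto -x$ and translation) and then a direct interval computation.
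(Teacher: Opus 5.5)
Your proof is correct, and it takes a different route from the paper. The paper gives no argument of its own: it imports the statement from \cite[Proposition 3.1 and Corollary 3.13]{RPA}. The only in-paper verification is the remark after Corollary \ref{cor:classification} for $m=2$. That remark uses your strategy: it exhibits explicit induced matchings in $G_{7,2}$ and $G_{8,2}$ and takes $\indm\le2$ from Theorem \ref{thm:gen-powers}.

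\textbf{Lower bound for $3m+1\le n\le4m$.} Your matching $\{0,n-2m\},\{m,n-m\}$ makes that check uniform in $m$, and for $n=8$, $m=2$ it reproduces the paper's $\{0,4\},\{2,6\}$. It also shows the geometry: with $g=n-2m$, the two endpoints $m$ and $n-m$ of the second edge lie in the two separate pieces $[g-m,m]$ and $[n-m,g+m]$ of $\ball{0}\cap\ball{g}$.

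\textbf{Gap-freeness for $n\ge4m+1$.} This argument is essentially Lemma \ref{lem:arcs}(1). In this range the two balls around the endpoints of an edge meet in a single arc with $2m+1-d(a,b)\le m$ vertices, so that arc cannot contain an edge of $\Gnm$. Your interval computation, after normalizing by the automorphisms $x\mapsto x+t$ and $x\mapsto -x$, carries this out correctly. The earlier phrase about the intersection lying in ``two short arcs'' is loose but harmless.

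\textbf{What each approach buys.} Your route is self-contained and makes the threshold $4m+1$ transparent: it is exactly where the wrap-around overlap $[n-m,g+m]$ disappears. The cost is that the upper bound $\indm\le2$ in the first range rests on Theorem \ref{thm:gen-powers}, and hence on the dichotomy theorem and Katzman's bound. This is not circular, since Theorem \ref{thm:gen-powers} is proved without Theorem \ref{thm:ind}. Alternatively, you could take $\reg(R/I)=2$ from \cite{RPA} together with Theorem \ref{thm:katzman}. The citation is lighter on the page but leaves the threshold unexplained.
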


\begin{theorem}\label{thm:main}
Let $n\ge3m+1$ and let $I=I(\Gnm)$. Then $\reg I=3$ and
\[
\reg(I^k)=2k+\indm(\Gnm)-1\qquad\text{for all }k\ge2 .
\]
More explicitly,
\[
\reg(I^k)=
\begin{cases}
2k+1,&3m+1\le n\le4m,\\
2k,&n\ge4m+1.
\end{cases}
\]
In the first range the formula holds for every $k\ge1$, and in the second range
for every $k\ge2$. At $k=1$ in the second range $2k+\indm(\Gnm)-1=2$, whereas
$\reg I=3$.
\end{theorem}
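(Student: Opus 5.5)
The plan is to specialize Theorem \ref{thm:gen-powers} to $H=C_n^m$, $N=n$, and to combine it with the known value of $\reg I$ and the induced matching number. By Lemma \ref{lem:windows}, every maximal clique of $C_n^m$ is a window, hence an arc. As shown in the proof of Corollary \ref{cor:dichotomy}, condition \eqref{eq:star} holds exactly because $n\ge3m+1$. Since $n\ge4\ge2$, Theorem \ref{thm:gen-powers} applies. It gives $\reg(I^k)=2k+\indm(\Gnm)-1$ for every $k\ge2$. Substituting the values of $\indm(\Gnm)$ from Theorem \ref{thm:ind} yields the explicit two-range formula for $k\ge2$.

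Next I would establish $\reg I=3$. One option is to cite \cite{RPA}, which gives $\reg(R/I)=2$. A self-contained route is Corollary \ref{cor:dichotomy} applied to $W=\ZZ_n$: every gap equals $1\le m$, so $c(\ZZ_n)=0$ and $\Dnm\simeq S^1$. Hochster's formula (Theorem \ref{thm:hochster}) with $W=\ZZ_n$, $j=n$, $i=n-2$ then gives $\beta_{n-2,n}(R/I)=\dim_\kk\Hred_1(\Dnm;\kk)=1\neq0$. Hence $\reg(R/I)\ge n-(n-2)=2$. The upper bound $\reg(R/I)\le2$ was already established in the proof of Theorem \ref{thm:gen-powers}. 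Therefore $\reg(R/I)=2$, and so $\reg I=3$.

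Finally I would treat $k=1$. In the first range $3m+1\le n\le4m$ we have $\indm(\Gnm)=2$, so $2\cdot1+2-1=3=\reg I$, and the formula holds for every $k\ge1$. In the second range $\indm(\Gnm)=1$, so the formula at $k=1$ would predict $2$, while $\reg I=3$. This matches the remark in the statement, and so the formula there holds exactly for $k\ge2$.

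There is no real obstacle here, since the substance is contained in Theorems \ref{thm:gen-powers} and \ref{thm:ind}. The only care needed is to check that the hypotheses of Definition \ref{def:class} are met, and to justify $\reg I=3$ independently of $\indm$, which I would do through the nonvanishing of $\Hred_1(\Dnm)$.
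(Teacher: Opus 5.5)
Your proposal is correct and follows the paper's proof: you apply Theorem \ref{thm:gen-powers} to $H=C_n^m$ (checking its hypotheses via Lemma \ref{lem:windows} and the proof of Corollary \ref{cor:dichotomy}), insert the values from Theorem \ref{thm:ind}, and settle $k=1$ by comparing with $\reg I=3$. The only difference is that the paper cites \cite[Corollary 3.12]{RPA} for $\reg(R/I)=2$. Your self-contained argument is also valid and removes that dependence on \cite{RPA}: the lower bound comes from $\Dnm\simeq S^1$ via Hochster's formula, which gives $\beta_{n-2,n}(R/I)=1$, and the upper bound from Theorem \ref{thm:gen-powers}.
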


\begin{proof}
By Lemma \ref{lem:windows}, the maximal cliques of $C_n^m$ are arcs, and
\eqref{eq:star} holds, as shown in the proof of Corollary
\ref{cor:dichotomy}. Hence Theorem \ref{thm:gen-powers} applies to
$H=C_n^m$. By \cite[Corollary 3.12]{RPA}, we have $\reg(R/I)=2$. Therefore,
$\reg I=3$. Combining Theorem \ref{thm:gen-powers} with Theorem
\ref{thm:ind}, we get $\reg(I^k)=2k+\indm(\Gnm)-1$ for every $k\ge2$. This
is the displayed case distinction. Finally, if $k=1$, then the value
$\reg I=3$ equals $2k+1$. Hence, in the range $3m+1\le n\le4m$, the formula
extends to every $k\ge1$.
\end{proof}

\begin{corollary}\label{cor:classification}
Let $n\ge3m+1$. Some power of $I(\Gnm)$ has a linear resolution if and only if
$n\ge4m+1$. In this case every $I^k$ with $k\ge2$ has a linear resolution.
\end{corollary}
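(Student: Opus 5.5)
This corollary follows directly from Theorem \ref{thm:main}; the plan is to read off both directions from the explicit formula for $\reg(I^k)$. Throughout, $I=I(\Gnm)$ is generated in degree two, so $I^k$ is generated in degree $2k$. Hence $I^k$ has a linear resolution exactly when $\reg(I^k)=2k$, since the regularity of an ideal generated in a single degree $d$ is at least $d$, with equality precisely when the resolution is linear.

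\emph{Case $n\ge4m+1$.} Here Theorem \ref{thm:main} gives $\reg(I^k)=2k$ for every $k\ge2$, so every such power has a linear resolution. In particular some power does, which also proves the last sentence of the statement.

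\emph{Case $3m+1\le n\le4m$.} Here Theorem \ref{thm:main} gives $\reg(I^k)=2k+1>2k$ for every $k\ge1$, so no power of $I$ has a linear resolution. This includes $k=1$, which is consistent with $\reg I=3$. Combining the two cases yields the equivalence.

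There is no real obstacle. The only points to state explicitly are the equivalence between a linear resolution and $\reg(I^k)=2k$ for an ideal generated in degree $2k$, and that the first range is covered for all $k\ge1$, not only $k\ge2$. Alternatively, one could cite the last sentence of Theorem \ref{thm:gen-powers} together with Theorem \ref{thm:ind}, which says $\Gnm$ is gap-free iff $n\ge4m+1$; the case $k=1$ is then handled by $\reg I=3$ from Theorem \ref{thm:main}.
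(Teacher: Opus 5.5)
Your proposal is correct and is essentially the paper's proof: both reduce linearity of $I^k$ to $\reg(I^k)=2k$, since $I^k$ is generated in degree $2k$, and then read off the two ranges from Theorem \ref{thm:main}, using that the first range is covered for every $k\ge1$. Your alternative via Theorem \ref{thm:gen-powers} and Theorem \ref{thm:ind} also works but is not needed.
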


\begin{proof}
Note that $I^k$ is generated in degree $2k$ for every $k\ge1$. Hence $I^k$
has a linear resolution if and only if $\reg(I^k)=2k$. If
$3m+1\le n\le4m$, then Theorem \ref{thm:main} gives $\reg(I^k)=2k+1$ for
every $k\ge1$. Therefore, no power of $I$ has a linear resolution in this
range. If $n\ge4m+1$, then Theorem \ref{thm:main} gives $\reg(I^k)=2k$ for
every $k\ge2$. Hence every such power has a linear resolution.
\end{proof}

For $m=2$, Theorems \ref{thm:ind} and \ref{thm:main} give
$\indm(\overline{C_n^2})=2$ for $n\in\{7,8\}$ and
$\indm(\overline{C_n^2})=1$ for $n\ge9$. The lower bounds in the first
value can be checked by
hand. Indeed, the edges $\{0,3\}$ and $\{1,5\}$ form an induced matching of
$G_{7,2}$, and the edges $\{0,4\}$ and $\{2,6\}$ form an induced matching of
$G_{8,2}$, while the reverse inequality $\indm\le2$ holds by Theorem
\ref{thm:gen-powers}. Consequently,
$\reg(I(\overline{C_7^2})^k)=\reg(I(\overline{C_8^2})^k)=2k+1$ for every
$k\ge1$, so no power of these two ideals has a linear resolution, while
$\reg(I(\overline{C_n^2})^k)=2k$ for $n\ge9$ and $k\ge2$. The article
\cite{RPS} treats $\overline{C_n^2}$ for $n>6$ as a family with induced
matching number one. The induced matchings above show that this holds only
for $n\ge9$, and the statements of \cite{RPS} that depend on gap-freeness,
in particular the regularity of powers in \cite[Theorem 44]{RPS}, are
therefore valid for $n\ge9$.

\begin{example}\label{ex:anticycle}
For $m=1$ and $n\ge5$ we have $G_{n,1}=\overline{C_n}$, and Theorem
\ref{thm:main} gives $\reg(I(\overline{C_n})^k)=2k$ for $k\ge2$. The
remaining case
$n=4$ lies in the first range of Theorem \ref{thm:main}, since $3m+1=4m=4$.
Here $G_{4,1}=\overline{C_4}=2K_2$, $\indm(G_{4,1})=2$ and
$\reg(I^k)=2k+1$.
\end{example}

For $m\ge2$ and $n\ge4m+1$, the linear resolutions of the
powers $I(\Gnm)^k$ are not covered by the known sufficient conditions of
\cite{Ne}, \cite{Ba} and \cite{Er}. The graph $\Gnm$ contains an induced
claw with center $m+3$ and leaves $0,1,2$, together with an induced cricket
with triangle $\{0,m+1,2m+2\}$ and pendant vertices $1,2$ attached at
$2m+2$. Hence the claw-free hypothesis, under which Nevo \cite{Ne} obtained
a linear resolution for the second power, and the cricket-free hypothesis,
under which Banerjee \cite{Ba} obtained linear resolutions for all $I^k$
with $k\ge2$, both fail for $\Gnm$. When $n\ge3m+4$, the graph also contains
an induced diamond on $\{0,1,m+2,2m+3\}$ whose missing edge is $\{0,1\}$.
Here the adjacency of $0$ and $2m+3$ amounts to
$d(0,2m+3)=n-2m-3\ge m+1$, that is, to the stated bound $n\ge3m+4$. Hence
the diamond-free criterion of Erey \cite{Er} fails as well. For $m\ge3$ the
inequality $4m+1\ge3m+4$ shows that this is automatic, and the single
exception in the range is $(n,m)=(9,2)$. A triangle of $G_{9,2}$ through $0$
requires two further vertices at cyclic distance at least $3$ from $0$ and
from each other, which forces the triangle $\{0,3,6\}$. Hence, by the
rotational symmetry of $G_{9,2}$, the triangles
of $G_{9,2}$ are the three pairwise disjoint sets $\{i,i+3,i+6\}$, no two
triangles share an edge, and $G_{9,2}$ is diamond-free. The linear
resolutions of its powers are therefore also covered by \cite{Er}.

The class of graphs in Definition \ref{def:class} is considerably larger
than the family of powers of cycles. The following lemma provides a general
construction of members of this class.

\begin{lemma}\label{lem:supply}
Let $s\ge4$. Let $A_0,\dots,A_{s-1}$ be arcs of $\ZZ_N$, indexed
cyclically, such that consecutive arcs intersect, arcs that are not
consecutive are disjoint, no arc contains another, and the arcs cover
$\ZZ_N$. Let $H$ be the graph on $\ZZ_N$ whose edges are the pairs of
distinct vertices lying in a common $A_i$. Then the maximal cliques of $H$
are exactly $A_0,\dots,A_{s-1}$, and $H$ satisfies \eqref{eq:star} if and
only if
\[
|A_i\cup A_{i+1}|+\max_j|A_j|\le N+1\qquad\text{for every }i.
\]
\end{lemma}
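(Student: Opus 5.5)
The proof has two parts. First we identify the maximal cliques. Then we compute closed neighborhoods and translate \eqref{eq:star}.

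\emph{Step 1: every clique of $H$ lies in some $A_i$.} Each $A_i$ is a clique by construction. Let $Q$ be a clique and suppose it lies in no single arc. Two adjacent vertices lie in a common arc. A vertex lies in one arc, or in two consecutive arcs if it is in an overlap $A_i\cap A_{i+1}$, since nonconsecutive arcs are disjoint. Consider the sets $S(x)=\{i: x\in A_i\}$ for $x\in Q$. Each $S(x)$ is a singleton or a consecutive pair, and any two of them intersect because their vertices are adjacent. Since $s\ge4$, a family of pairwise intersecting sets of this form has a common element. Indeed, two pairwise intersecting consecutive pairs $\{i,i+1\}$ and $\{i+1,i+2\}$ leave any third member forced to contain $i+1$; for $s=3$ the triangle $\{0,1\},\{1,2\},\{2,0\}$ would be a counterexample. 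So $Q\subseteq A_i$ for some $i$. Since no arc contains another, the maximal cliques are exactly the $A_i$, and all of them are arcs. In particular $\omega=\max_j|A_j|$.

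\emph{Step 2: compute $N_H[v]$.} For any $v$, $N_H[v]$ is the union of the arcs containing $v$. If $v\in A_i\cap A_{i+1}$, then $N_H[v]=A_i\cup A_{i+1}$. If $v$ lies only in $A_i$, then $N_H[v]=A_i$, which is contained in $A_i\cup A_{i+1}$. Every pair $A_i\cup A_{i+1}$ is actually realized as a closed neighborhood, because consecutive arcs intersect. Hence
\[
\max_v|N_H[v]|=\max_i|A_i\cup A_{i+1}|,
\]
and \eqref{eq:star} is equivalent to the displayed condition.

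\emph{Main obstacle.} The delicate point is the Helly-type argument in Step 1. One has to make sure that a vertex never lies in three arcs; this holds because nonconsecutive arcs are disjoint and $s\ge4$, so $A_{i-1}$ and $A_{i+1}$ are not consecutive. With that in place, the case check of pairwise intersecting singletons and consecutive pairs on the cycle $\ZZ_s$ with $s\ge4$ closes the argument.
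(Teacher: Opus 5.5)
Your proposal is correct and takes essentially the same route as the paper. Both arguments use $s\ge4$ to rule out a vertex lying in three arcs, deduce that every clique lies in a single $A_i$, and then read off $N_H[v]\in\{A_i,\;A_i\cup A_{i+1}\}$ to translate \eqref{eq:star}. Your Helly-type statement about the index sets $S(x)$ is a repackaging of the paper's case split on one vertex $u$ of the clique, where the key fact is that vertices of $A_i\setminus A_{i+1}$ and $A_{i+1}\setminus A_i$ share no arc.
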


\begin{proof}
Note that each $A_i$ is a clique by construction. Since arcs that are not
consecutive are disjoint, two arcs with a common vertex are consecutive.
Moreover, no vertex lies in three arcs, since for $s\ge4$ the arcs
$A_{i-1}$ and $A_{i+1}$ are not consecutive and are therefore disjoint.
Hence every vertex lies in a single arc or in two consecutive arcs. Let $C$ be a clique of $H$ and let $u\in C$. Suppose first
that $u$ lies only in $A_i$. Then every vertex of $C$ shares an arc with
$u$. Hence $C\subseteq A_i$. Suppose next that $u\in A_i\cap A_{i+1}$. Then
$C\subseteq A_i\cup A_{i+1}$. We claim that $C$ cannot contain both a vertex
$x\in A_i\setminus A_{i+1}$ and a vertex $y\in A_{i+1}\setminus A_i$. Note
that the arcs containing $x$ are among $A_{i-1},A_i$, and the arcs
containing $y$ are among $A_{i+1},A_{i+2}$. Since $s\ge4$, the indices
$i-1,i,i+1,i+2$ are pairwise distinct modulo $s$. Hence $x$ and $y$ share no
arc. Therefore, $x$ and $y$ are nonadjacent, which proves the claim. It
follows that every clique lies in a single $A_i$. Since no arc contains
another, the maximal cliques are exactly the $A_i$.
Finally, we have $N_H[v]=A_i\cup A_{i+1}$ for $v\in A_i\cap A_{i+1}$, and
$N_H[v]=A_i$ for $v$ lying in the single arc $A_i$. Hence the displayed
condition is \eqref{eq:star} at the overlap vertices, which exist because
consecutive arcs intersect, and it implies \eqref{eq:star} at the remaining
vertices.
\end{proof}

Varying the number, the lengths and the pairwise overlaps of the arcs
produces a broad collection of irregular graphs in the class, and no
irregular graph is a power of a cycle, since $C_n^m$ is $2m$-regular. We
conclude this section with an explicit example.

\begin{example}\label{ex:z8}
Let $N=8$ and let $H$ be the graph of Lemma \ref{lem:supply} whose arcs,
and hence maximal cliques, are
\[
\{7,0,1\},\qquad \{1,2\},\qquad \{2,3,4\},\qquad \{4,5,6\},\qquad \{6,7\},
\]
shown in Figure \ref{fig:z8}. Here $\omega=3$ and the degree sequence is
$(2,3,3,2,4,2,3,3)$. Hence $H$ is not regular. Therefore, $H$ is not a power
of a cycle. Moreover, $\max_v|N_H[v]|+\omega=8\le N+1$. Hence \eqref{eq:star}
holds. Note that $H$ is not chordal, since the vertices $1,2,4,6,7$ induce a
five-cycle. Taking $W=\ZZ_8$, every link is covered. Hence, by Theorem
\ref{thm:dichotomy}, we get $\Cl(H)\simeq S^1$. Therefore, by Hochster's
formula, $\beta_{6,8}(R/I(\overline H))\ne0$. This implies that
$\reg(R/I(\overline H))\ge2$. Since Theorem \ref{thm:gen-powers} supplies
the matching upper bound, we conclude that $\reg(R/I(\overline H))=2$.
Checking the pairs of edges of $\overline H$, we get
$\indm(\overline H)=1$. Therefore, by Theorem \ref{thm:gen-powers}, we
obtain $\reg(I(\overline H)^k)=2k$ for every $k\ge2$.
\end{example}

\begin{figure}[ht]
\centering
\begin{tikzpicture}[scale=1.5]
\foreach \i in {0,...,7}{
  \coordinate (h\i) at ({90-45*\i}:1.25);
}
\fill[gray!20] (h7)--(h0)--(h1)--cycle;
\fill[gray!20] (h2)--(h3)--(h4)--cycle;
\fill[gray!20] (h4)--(h5)--(h6)--cycle;
\foreach \i in {0,...,7}{
  \pgfmathtruncatemacro{\j}{mod(\i+1,8)}
  \draw[thick] (h\i) -- (h\j);
}
\draw[thick] (h7)--(h1);
\draw[thick] (h2)--(h4);
\draw[thick] (h4)--(h6);
\foreach \i in {0,...,7}{
  \fill (h\i) circle (1.7pt);
  \node[font=\scriptsize] at ({90-45*\i}:1.46) {$\i$};
}
\end{tikzpicture}
\caption{A circular interval graph on $\ZZ_8$ that is not a power of a
cycle. Its maximal cliques are the two shaded triangles $\{2,3,4\}$ and
$\{4,5,6\}$, the triangle $\{7,0,1\}$, and the edges $\{1,2\}$ and
$\{6,7\}$.}
\label{fig:z8}
\end{figure}

\section{Even-connection graphs and colon ideals}\label{sec:colon}

In this section, we prove Theorem \ref{thm:key}. Throughout this section we
assume that $n\ge4m+1$, and we write $G=\Gnm$ and $I=I(G)$. Let
$M=e_1\cdots e_k$ be a minimal monomial generator of $I^k$, with
$e_j=\{a_j,b_j\}\in E(G)$, so that $d(a_j,b_j)\ge m+1$ for every $j$. Set
$A_j=\ball{a_j}$ and $B_j=\ball{b_j}$, and define
\[
\cB=\{\ball{c}:c\in\{a_1,b_1,\dots,a_k,b_k\}\},
\qquad
\cM(u)=\{X\in\cB:u\notin X\}\ \ (u\in\ZZ_n).
\]
The \emph{span} of an arc is one less than its number of vertices. We begin
with two lemmas concerning arcs in $\ZZ_n$.

\begin{lemma}\label{lem:lindist}
Let $J\subseteq\ZZ_n$ lie in an arc of span at most $n-m-1$. If $u,v\in J$
are at linear distance $\delta$ in the linear order of that arc, then
$d(u,v)\le m$ if and only if $\delta\le m$, and in
that case $d(u,v)=\delta$.
\end{lemma}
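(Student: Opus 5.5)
The plan is a direct computation from the definition of the cyclic distance. Fix the arc containing $J$, say $[c,c+L]$ with span $L\le n-m-1$, and give its vertices the linear coordinates $0,1,\dots,L$. Let $u,v\in J$ have coordinates $i\le j$, so that $\delta=j-i$ with $0\le\delta\le L$. Since the arc is proper (its span $L\le n-m-1<n$), we have $v=u+\delta$ in $\ZZ_n$. Hence the two forward distances are $(v-u)\bmod n=\delta$ and $(u-v)\bmod n=n-\delta$ (both taken as $0$ when $\delta=0$). By the definition of $d$ this gives
\[
d(u,v)=\min\{\delta,\ n-\delta\}.
\]

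The key step is the observation that $n-\delta\ge n-L\ge m+1$, which is exactly what the span hypothesis supplies. It yields both directions at once.

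If $\delta\le m$, then $\delta\le m<m+1\le n-\delta$, so the minimum is $\delta$. Thus $d(u,v)=\delta\le m$.

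If $\delta\ge m+1$, then both $\delta$ and $n-\delta$ are at least $m+1$. Therefore $d(u,v)\ge m+1$, that is, $d(u,v)>m$.

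Combining the two cases, $d(u,v)\le m$ if and only if $\delta\le m$, and in that case $d(u,v)=\delta$. The case $\delta=0$ (so $u=v$) is trivial.

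I do not expect any real obstacle. The only point to handle carefully is the identification of the forward distance $(v-u)\bmod n$ with the linear distance $\delta$. This uses that the arc has fewer than $n$ vertices beyond its start, so there is no wraparound: since $\delta\le L<n$, $\delta$ is already a representative in $\{0,\dots,n-1\}$.
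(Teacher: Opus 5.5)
Your proof is correct and follows essentially the same argument as the paper: both use the span bound to get $n-\delta\ge m+1$, and then $d(u,v)=\min\{\delta,n-\delta\}$ settles both directions. Your explicit check that $(v-u)\bmod n=\delta$ (no wraparound) and your separate handling of $\delta=0$ are details the paper leaves implicit.
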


\begin{proof}
Since $J$ lies in an arc of span at most $n-m-1$, we have $\delta\le n-m-1$.
Therefore, $n-\delta\ge m+1$. If $\delta\le m$, then
$n-\delta\ge n-m>m\ge\delta$. Hence $d(u,v)=\delta\le m$. If $\delta>m$,
then both $\delta$ and $n-\delta$ are greater than $m$. Hence $d(u,v)>m$.
\end{proof}

\begin{lemma}\label{lem:arcs}
Let $n\ge4m+1$, and let $X=\ball{c}$ and $Y=\ball{c'}$ be closed $m$-balls.
Then the following statements hold.
\begin{enumerate}[label=\textup{(\arabic*)}]
\item The intersection $X\cap Y$ is an arc, possibly empty. If it is nonempty,
then $|X\cap Y|=2m+1-d(c,c')$ and $d(c,c')\le2m$.
\item If $u,v\in X$ and $d(u,v)\le m$, then the shorter arc joining $u$ and $v$
is contained in $X$.
\item Suppose in addition that $d(c,c')\ge m+1$, that $X\cap Y\ne\emptyset$ and
that $X\cup Y\ne\ZZ_n$. Then $d(w,z)\ge m+2$ for every $w\in X\cap Y$ and every
$z\notin X\cup Y$.
\end{enumerate}
\end{lemma}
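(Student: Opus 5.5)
We will work throughout in coordinates centred at one of the ball centres, and reduce every claim to a computation with cyclic distances. Since $n\ge4m+1$, the ball $\ball{c}$ is the arc $[c-m,c+m]$ of span $2m$, and its complement is an arc of $n-2m-1\ge2m$ vertices.

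\emph{Part (1).} Write $\delta=d(c,c')$ and, after rotating and reflecting, assume $c=0$ and $c'=\delta$ with $0\le\delta\le\lfloor n/2\rfloor$. We have $X=[-m,m]$ and $Y=[\delta-m,\delta+m]$, read modulo $n$. If $\delta\le2m$, the linear intervals $[-m,m]$ and $[\delta-m,\delta+m]$ meet in $[\delta-m,m]$, which has $2m+1-\delta$ elements. We then check that no additional wrap-around intersection occurs. Such an intersection would require $\delta+m\ge n-m$, that is, $\delta\ge n-2m\ge2m+1$, which is impossible here. If $\delta>2m$, the two intervals are linearly disjoint, and the wrap-around condition $\delta+m\ge n-m$ would force $\delta\ge n-2m$. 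Since $\delta\le n/2$, this would give $n\le4m$, contradicting $n\ge4m+1$. So $X\cap Y=\emptyset$ in that case. Together these give (1): the intersection is empty, or it is an arc of size $2m+1-\delta$ with $\delta\le2m$.

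\emph{Part (2).} Take $X=[-m,m]$ with $u<v$ in its linear coordinates. The arc inside $X$ from $u$ to $v$ has linear length $\delta=v-u\le 2m\le n-m-1$. By Lemma \ref{lem:lindist}, applied to $J=X$, whose span $2m$ is at most $n-m-1$, we get $d(u,v)\le m$ exactly when $\delta\le m$, and then $d(u,v)=\delta$. The other arc has $n-\delta\ge n-m>m$ steps, so the shorter arc is the inner one, and it lies in $X$.

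\emph{Part (3).} With $c=0$, $c'=\delta$ and $m+1\le\delta\le2m$ (the upper bound by (1)), we have $X\cup Y=[-m,\delta+m]$. This is an arc of $2m+\delta+1$ vertices, and the hypothesis says it is not all of $\ZZ_n$. The intersection is $[\delta-m,m]$. A point $z\notin X\cup Y$ lies in $[\delta+m+1,n-m-1]$. For $w\in[\delta-m,m]$, the forward distance from $w$ to $z$ is at least $(\delta+m+1)-m=\delta+1\ge m+2$. The backward distance is at least $n-m-1-(\delta-m)$ measured the other way, which equals $w+n-z\ge(\delta-m)+m+1=\delta+1\ge m+2$. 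Hence $d(w,z)\ge m+2$.

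The main obstacle is the bookkeeping of wrap-around in (1), namely ruling out a second component of $X\cap Y$. This is exactly where $n\ge4m+1$ is used, and the check $\delta\le n/2$ together with $\delta\ge n-2m$ forcing $n\le 4m$ needs care. The remaining steps are direct linear estimates.
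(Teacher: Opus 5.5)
Your proof is correct and follows essentially the paper's route. You place the centres at $c=0$ and $c'=\delta$, identify the overlap as $[\delta-m,m]$, and exclude the wrap-around (two-sided) overlap because it would force $n\le 4m$; in (3) you bound both the forward and backward distances below by $\delta+1$, and in (2) you invoke Lemma \ref{lem:lindist} where the paper compares vertex counts ($2m+1$ against at least $n-m+1$), which amounts to the same thing.

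One small slip in (3): as written, ``the backward distance is at least $n-m-1-(\delta-m)$'' is false, since that quantity is the maximum of the forward distance $z-w$. The chain you then give, $w+n-z\ge(\delta-m)+(m+1)=\delta+1$, is exactly right, so only that phrase needs fixing.
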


\begin{proof}
(1) Suppose that $X\cap Y$ has two components. We claim that this can occur
only when $X\cup Y=\ZZ_n$ and neither ball contains the other. Indeed, if
$X\cup Y\ne\ZZ_n$, then both balls lie in the complement of a point, hence
in a common arc, and the intersection of two subarcs of an arc is an arc.
Moreover, if one ball contains the other, then the intersection is that
ball, which is an arc. This proves the claim. In that case,
$n=|X\cup Y|=4m+2-|X\cap Y|.$
Since $n\ge4m+1$, we obtain $|X\cap Y|\le1$. This is incompatible with two
nonempty components. Hence every nonempty intersection is a single arc.

Assume that $X\cap Y\ne\emptyset$. We claim that the two balls can overlap
only on one side. Indeed, a two-sided overlap gives $d(c,c')\le2m$ and
$n-d(c,c')\le2m$. This implies that $n\le4m$, which contradicts
$n\ge4m+1$. This proves the claim. Now, in suitable coordinates with $c=0$
and $c'=d(c,c')$, the intersection is the arc $[d(c,c')-m,\,m]$. Hence
$|X\cap Y|=2m+1-d(c,c')$. In particular, since the intersection is
nonempty, $d(c,c')\le2m$.

(2) Note that the ball $X$ is an arc with $2m+1$ vertices. Since
$u,v\in X$, one of the two arcs joining $u$ and $v$ lies in $X$. The other
arc has at least $n-m+1\ge3m+2$ vertices, which is more than $|X|=2m+1$.
Therefore, the arc contained in $X$ is the shorter one.

(3) Without loss of generality, after reversing the cyclic orientation if
necessary, we may assume that $c=0$ and $c'=t$ with $t=d(c,c')$, and
$t\le2m$ by part (1). Then
$X\cap Y=[t-m,m]$ and $X\cup Y=[-m,t+m]$. Since $X\cup Y\ne\ZZ_n$, the
complement $\ZZ_n\setminus(X\cup Y)=[t+m+1,n-m-1]$ is nonempty. Let
$w\in[t-m,m]$ and $z\in[t+m+1,n-m-1]$. Observe that the forward distance
from $w$ to $z$ is at least $(t+m+1)-m=t+1$, and the forward distance from
$z$ to $w$ is at least $(t-m)-(n-m-1)+n=t+1$. Therefore, we get
$d(w,z)\ge t+1\ge m+2$, since $t\ge m+1$ by hypothesis. This completes the
proof.
\end{proof}

\begin{definition}\label{def:transfer}
A pair $(c,c')\in\ZZ_n\times\ZZ_n$ is called a \emph{transfer pair} for $M$ if
there exist $\ell\ge1$ and vertices $p_1,\dots,p_{2\ell}$ with $p_1=c$ and
$p_{2\ell}=c'$ such that the following conditions hold.
\begin{enumerate}[label=\textup{(\roman*)}]
\item Each $\{p_{2i-1},p_{2i}\}$ is one of the edges $e_j$.
\item No edge $e_j$ is used more often than its multiplicity in $M$.
\item $d(p_{2i},p_{2i+1})\ge m+1$ for every $1\le i\le\ell-1$.
\end{enumerate}
We denote the set of all transfer pairs by $\cP(M)$, and we set
$\cQ(M)=\{(\ball{c},\ball{c'}):(c,c')\in\cP(M)\}$.
\end{definition}

In a transfer pair $(c,c')$, the vertex $c=p_1$ lies in the edge
$\{p_1,p_2\}$, which is one of the $e_j$, and likewise $c'=p_{2\ell}$.
Hence $c$ and $c'$ are among the letters $a_1,b_1,\dots,a_k,b_k$, and both
$\ball{c}$ and $\ball{c'}$ belong to $\cB$. Note that reversing a defining
sequence shows that $\cP(M)$, and hence $\cQ(M)$, is symmetric. Moreover,
taking $\ell=1$, we get
$(A_j,B_j),(B_j,A_j)\in\cQ(M)$ for every $j$.

\begin{lemma}\label{lem:transfer}
Let $u$ and $v$ be vertices of $\ZZ_n$, not necessarily distinct. Then $u$
and $v$ are even-connected with respect to $M$ if and only if
\[
\cQ(M)\cap\bigl(\cM(u)\times\cM(v)\bigr)\ne\emptyset .
\]
\end{lemma}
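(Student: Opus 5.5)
This is a direct translation between the two definitions. In $G=\Gnm$, adjacency of $x$ and $y$ means $d(x,y)\ge m+1$, that is, $y\notin\ball{x}$, or equivalently $x\notin\ball{y}$.

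For the forward direction, suppose $u$ and $v$ are even-connected with respect to $M$. Take a witnessing sequence $p_0=u,p_1,\dots,p_{2\ell+1}=v$ as in Definition \ref{def:evenconn}. Then the inner subsequence $p_1,\dots,p_{2\ell}$ satisfies conditions (i) and (ii) of Definition \ref{def:transfer}. The edges $\{p_{2i},p_{2i+1}\}$ of $G$ for $1\le i\le\ell-1$ give condition (iii). Hence $(p_1,p_{2\ell})\in\cP(M)$. The two remaining edges $\{u,p_1\}$ and $\{p_{2\ell},v\}$ of $G$ give $u\notin\ball{p_1}$ and $v\notin\ball{p_{2\ell}}$. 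As noted after Definition \ref{def:transfer}, both balls lie in $\cB$. So $(\ball{p_1},\ball{p_{2\ell}})$ lies in $\cQ(M)\cap(\cM(u)\times\cM(v))$.

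For the converse, take an element $(X,Y)$ of this intersection. By the definition of $\cQ(M)$ it arises from some $(c,c')\in\cP(M)$ with $X=\ball{c}$ and $Y=\ball{c'}$. Fix a defining sequence $p_1=c,\dots,p_{2\ell}=c'$, and set $p_0=u$ and $p_{2\ell+1}=v$. Since $u\notin X=\ball{c}$, we have $d(u,c)\ge m+1$, so $\{p_0,p_1\}\in E(G)$. Likewise $v\notin Y$ gives $\{p_{2\ell},p_{2\ell+1}\}\in E(G)$. The inner pairs $\{p_{2i},p_{2i+1}\}$ are edges by (iii). The pairs $\{p_{2i-1},p_{2i}\}$ are among the $e_j$, so they are edges, and they are used within their multiplicities by (i) and (ii). This is exactly Definition \ref{def:evenconn}.

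The only subtle point is well-definedness. A ball may arise from several transfer pairs, and distinct centers give distinct balls because $n\ge2m+1$. So when recovering $c$ from $X$ we should pick the pair $(c,c')\in\cP(M)$ actually witnessing membership in $\cQ(M)$, not an arbitrary center. The edge cases $u=v$ and $p_0=p_1$ need no comment: $u\notin\ball{c}$ already forces $u\ne c$, and Definition \ref{def:evenconn} allows $u=v$.
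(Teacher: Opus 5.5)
Your proof is correct and follows the paper's argument essentially verbatim. In one direction you restrict an even-connection to its interior to get a transfer pair whose balls omit $u$ and $v$, and in the other you extend a witnessing sequence by $u$ and $v$. One small slip in your closing remark, which the argument never uses: $c\mapsto\ball{c}$ is injective only for $n\ge2m+2$ (for $n=2m+1$ every ball equals $\ZZ_n$), which holds under the standing assumption $n\ge4m+1$ and is moot anyway once you pick the witnessing pair $(c,c')$.
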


\begin{proof}
Suppose first that $u$ and $v$ are even-connected, and let
$u=p_0,p_1,\dots,p_{2\ell+1}=v$ be a sequence as in Definition
\ref{def:evenconn}. We show that the interior sequence $p_1,\dots,p_{2\ell}$
satisfies the conditions of Definition \ref{def:transfer}. Note that each
$\{p_{2i-1},p_{2i}\}$ with $1\le i\le\ell$ is one of the $e_j$, and no $e_j$
is used beyond its multiplicity in $M$. Moreover, for $1\le i\le\ell-1$, the
pair $\{p_{2i},p_{2i+1}\}$ is an edge of $G$. Hence
$d(p_{2i},p_{2i+1})\ge m+1$. Therefore, $(p_1,p_{2\ell})\in\cP(M)$ and
$(\ball{p_1},\ball{p_{2\ell}})\in\cQ(M)$. In particular, both $\ball{p_1}$
and $\ball{p_{2\ell}}$ belong to $\cB$. Moreover, $\{p_0,p_1\}$ and
$\{p_{2\ell},p_{2\ell+1}\}$ are edges of $G$. Hence $d(u,p_1)\ge m+1$ and
$d(v,p_{2\ell})\ge m+1$. In other words, $u\notin\ball{p_1}$ and
$v\notin\ball{p_{2\ell}}$. Therefore, $\ball{p_1}\in\cM(u)$ and
$\ball{p_{2\ell}}\in\cM(v)$, and the displayed intersection is nonempty.

Conversely, let $(X,Y)\in\cQ(M)\cap\bigl(\cM(u)\times\cM(v)\bigr)$, and
write $X=\ball{c}$ and $Y=\ball{c'}$ for a transfer pair $(c,c')\in\cP(M)$
with defining sequence $p_1=c,p_2,\dots,p_{2\ell}=c'$. The conditions
$X\in\cM(u)$ and $Y\in\cM(v)$ mean that $u\notin\ball{c}$ and
$v\notin\ball{c'}$. In other words, $d(u,p_1)\ge m+1$ and
$d(p_{2\ell},v)\ge m+1$. Consider the sequence
$u=p_0,p_1,\dots,p_{2\ell},p_{2\ell+1}=v$. Note that its pairs
$\{p_{2i-1},p_{2i}\}$ with $1\le i\le\ell$ are edges $e_j$ of $G$ used
within their multiplicities in $M$. Moreover, its remaining consecutive
pairs, namely $\{p_0,p_1\}$, the pairs $\{p_{2i},p_{2i+1}\}$ with
$1\le i\le\ell-1$, and $\{p_{2\ell},p_{2\ell+1}\}$, join vertices at
distance at least $m+1$. Hence they are edges of $G$. Therefore, the
sequence satisfies the conditions of Definition \ref{def:evenconn}, and $u$
and $v$ are even-connected with respect to $M$.
\end{proof}

Let $G'$ and $S$ be the graph on $\ZZ_n$ and the set defined after Theorem
\ref{thm:banerjeecolon}, for the present $G$ and $M$. By Theorem
\ref{thm:banerjeecolon}, the
ideal $(I^{k+1}:M)$ is generated in degree two. Hence a monomial of degree
two lies in it if and only if it is one of its minimal generators.
Therefore, Theorem \ref{thm:banerjeecolon} and Lemma
\ref{lem:transfer} give
\begin{equation}\label{eq:Gprime}
E(G')=E(G)\cup\bigl\{\{u,v\}:u\ne v,\
\cQ(M)\cap(\cM(u)\times\cM(v))\ne\emptyset\bigr\},
\end{equation}
and
\begin{equation}\label{eq:S}
S=\bigl\{u:\cQ(M)\cap(\cM(u)\times\cM(u))\ne\emptyset\bigr\}.
\end{equation}
In particular, since $E(G)\subseteq E(G')$, every edge of $\overline{G'}$
joins two vertices at distance at
most $m$.

\begin{lemma}\label{lem:Siso}
Let $n\ge4m+1$ and let $u\in S$. Then $u$ is isolated in $\overline{G'}$.
\end{lemma}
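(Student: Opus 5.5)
The plan is to show directly that $u$ is adjacent in $G'$ to every vertex $v\ne u$. By \eqref{eq:Gprime} and the remark after \eqref{eq:S}, it suffices to treat $v\ne u$ with $d(u,v)\le m$. For such $v$ we must exhibit some $(X,Y)\in\cQ(M)$ with $u\notin X$ and $v\notin Y$.

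Since $u\in S$, \eqref{eq:S} gives a transfer pair $(c,c')\in\cP(M)$, with defining sequence $p_1=c,p_2,\dots,p_{2\ell}=c'$, such that $u\notin\ball{p_1}$ and $u\notin\ball{p_{2\ell}}$. The key observation is that suitable subsequences are again transfer pairs.

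\begin{itemize}
\item For every $i$, the prefix $p_1,\dots,p_{2i}$ satisfies Definition \ref{def:transfer}. Hence $(p_1,p_{2i})\in\cP(M)$.
\item Reversing the suffix $p_{2i-1},\dots,p_{2\ell}$ gives a valid sequence. Hence $(p_{2\ell},p_{2i-1})\in\cP(M)$.
\end{itemize}

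Consequently, if $v\notin\ball{p_{2i}}$ for some $i$, then the pair $(\ball{p_1},\ball{p_{2i}})$ works. If $v\notin\ball{p_{2i-1}}$ for some $i$, then $(\ball{p_{2\ell}},\ball{p_{2i-1}})$ works. It therefore remains to derive a contradiction from the assumption that $v\in\ball{p_j}$ for every $1\le j\le 2\ell$.

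Under this assumption, every center $p_j$ lies in $\ball{v}$, which is an arc of span $2m\le n-m-1$. By Lemma \ref{lem:lindist}, cyclic distances inside this arc agree with linear distances whenever either is at most $m$. Place $v$ at $0$ in linear coordinates on $\ball{v}$.

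Consecutive centers satisfy $d(p_j,p_{j+1})\ge m+1$. This holds within each edge $e_j$, since these are edges of $G$, and between edges by condition (iii) of Definition \ref{def:transfer}. Two points of $[-m,m]$ at linear distance at least $m+1$ must lie strictly on opposite sides of $0$. In particular, no center equals $v$, and the signs of $p_1,p_2,\dots,p_{2\ell}$ alternate. Since $2\ell$ is even, $p_1$ and $p_{2\ell}$ lie on opposite sides of $v$.

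Now $u$ also lies in $\ball{v}$, and $u\ne v$, so $u$ sits strictly on one side of $v$, say $u\in[-m,-1]$. One of $p_1,p_{2\ell}$, call it $p$, also lies in $[-m,-1]$. Both $u$ and $p$ lie in $[-m,-1]$, so their linear distance is at most $m-1$. By Lemma \ref{lem:lindist}, $d(u,p)\le m$, that is, $u\in\ball{p}$. This contradicts $u\notin\ball{p_1}$ and $u\notin\ball{p_{2\ell}}$.

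The main step to check carefully is this sign-alternation argument: the linear coordinates are legitimate, and distance at least $m+1$ inside $[-m,m]$ forces opposite signs. The subsequence manipulations preserve the multiplicity condition automatically, since they only use sub-multisets of the edges in the original sequence.
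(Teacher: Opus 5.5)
Your proof is correct and follows essentially the paper's route. The three cases match: truncate at an even-position vertex outside $\ball{v}$, reverse and truncate at an odd-position one, and otherwise derive a parity contradiction from the alternating signs inside $\ball{v}$. The difference is only in presentation: you work with transfer pairs and the sets $\cM(\cdot)$ and handle $u$ separately from $p_1,\dots,p_{2\ell}$, whereas the paper works with the even-connection $u=p_0,\dots,p_{2\ell+1}=u$ and includes both copies of $u$ in the sign alternation.
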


\begin{proof}
Since $u\in S$, it follows from \eqref{eq:S} and Lemma \ref{lem:transfer}
that there is an even-connection $u=p_0,p_1,\dots,p_{2\ell+1}=u$
of $u$ with itself. Let $v\ne u$ satisfy $d(u,v)\le m$.

Suppose first that $d(p_{2i},v)\ge m+1$ for some $1\le i\le\ell$. Then
$u=p_0,\dots,p_{2i},v$ is an even-connection from $u$ to $v$. Indeed,
$\{p_{2i},v\}$ is an edge of $G$, and the sequence uses only a
sub-multiset of the edges appearing in $M$. Next, suppose that
$d(p_{2i+1},v)\ge m+1$ for some $0\le i\le\ell$. Note that the reversed
sequence $u=p_{2\ell+1},p_{2\ell},\dots,p_0=u$ is again an even-connection
of $u$ with itself, since the defining conditions are preserved under
reversal. Moreover, $i\le\ell-1$, since
$d(p_{2\ell+1},v)=d(u,v)\le m$. In the reversed sequence, the vertex
$p_{2i+1}$ therefore occupies the even position $2(\ell-i)\ge2$. Hence, as
in the first case, appending $v$ gives an
even-connection between $u$ and
$v$.

It remains to consider the case $d(p_i,v)\le m$ for every $i$. In this case,
the whole walk lies inside the ball $\ball{v}$. Note that the span of
$\ball{v}$ is $2m\le n-m-1$. Hence, by Lemma \ref{lem:lindist}, for points
inside $\ball{v}$, the condition of having cyclic distance at least $m+1$
is detected by the linear order of this arc. Choose coordinates
$\ball{v}=[-m,m]$ with $v=0$. Observe that two points both in $[0,m]$, or
both in $[-m,0]$, have linear distance at most $m$. Hence they have cyclic
distance at most $m$. This implies that two points of $[-m,m]$ at cyclic
distance at least $m+1$ lie strictly on opposite sides of $0$.

Now, observe that every consecutive pair $p_i,p_{i+1}$ in the
even-connection is an edge of $G$. Hence $d(p_i,p_{i+1})\ge m+1$, and the
signs of the $p_i$ alternate. On the other hand, $p_0=p_{2\ell+1}=u$, while the indices $0$ and
$2\ell+1$ have opposite parity. This implies that the two occurrences of
$u$ have opposite signs, which is a contradiction. Therefore, this case
cannot occur.

Hence $u$ is even-connected to every $v\ne u$ with $d(u,v)\le m$. By
\eqref{eq:Gprime}, this implies that no such vertex is adjacent to $u$ in
$\overline{G'}$. Since
every edge of $\overline{G'}$ joins vertices at distance at most $m$, we
conclude that $u$ is isolated in $\overline{G'}$.
\end{proof}

\begin{lemma}\label{lem:local}
Let $n\ge4m+1$. Every edge of $\overline{G'}$ has both endpoints in $A_1$ or
both endpoints in $B_1$.
\end{lemma}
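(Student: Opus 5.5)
We argue by contradiction from the description \eqref{eq:Gprime}. Let $\{u,v\}$ be an edge of $\overline{G'}$. Then $u\ne v$ and $d(u,v)\le m$, since $E(G)\subseteq E(G')$. Moreover, $u$ and $v$ are not even-connected with respect to $M$. By Lemma \ref{lem:transfer}, this means
\[
\cQ(M)\cap\bigl(\cM(u)\times\cM(v)\bigr)=\emptyset.
\]
Taking $\ell=1$, we have $(A_1,B_1),(B_1,A_1)\in\cQ(M)$. The pair $(A_1,B_1)$ is excluded only if $u\in A_1$ or $v\in B_1$, and the pair $(B_1,A_1)$ is excluded only if $u\in B_1$ or $v\in A_1$. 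So either one of the four ``good'' configurations holds (both endpoints in $A_1$, or both in $B_1$), or we are in one of the two mixed configurations: $u\in A_1\setminus B_1$ and $v\in B_1\setminus A_1$, or the same with $u$ and $v$ swapped. It therefore suffices to rule out the mixed configuration, say $u\in A_1\setminus B_1$ and $v\in B_1\setminus A_1$.

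In the mixed configuration, $d(u,v)\le m$ forces $A_1$ and $B_1$ to be close. Concretely, $d(a_1,b_1)\le d(a_1,u)+d(u,v)+d(v,b_1)\le3m$, while $d(a_1,b_1)\ge m+1$. I would place coordinates with $a_1=0$ and $b_1=t$, where $m+1\le t\le3m$, using the one-sided picture of Lemma \ref{lem:arcs}(1) and the fact $n\ge4m+1$. Then $u\in[-m,m]$ and $v\in[t-m,t+m]$. Since $u\notin B_1$ and $v\notin A_1$, the short arc from $u$ to $v$ passes through the region around the gap between the two balls. I expect to show that $u\in[-m,t-m-1]$ and $v\in[m+1,t+m]$, with $v-u\le m$.

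The aim is to derive a contradiction with non-even-connection by exhibiting a further transfer pair. The natural candidate is $(A_1,A_1)$ or $(B_1,B_1)$, obtained by using the edge $e_1$ twice. That is not allowed unless $e_1$ has multiplicity $2$, so instead I would look for the pair $(c,c')=(a_1,a_1)$ via the path $a_1,b_1,\ldots$ that comes back. This does not work in general either.

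The better route is a direct even-connection. If $d(u,b_1)\ge m+1$ and $d(a_1,v)\ge m+1$, then $u,b_1,a_1,v$ is an even-connection using $e_1$ once, which gives a contradiction. The relevant condition is exactly $u\notin B_1$ and $v\notin A_1$, which is precisely the mixed configuration. So the mixed case itself produces the even-connection $u\sim b_1$, $e_1$, $a_1\sim v$, and this corresponds to the transfer pair $(B_1,A_1)\in\cM(u)\times\cM(v)$. That contradicts emptiness immediately.

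Hence the main step is just the careful case analysis above. The only subtlety I expect is confirming that the four exclusion conditions really leave only the good configurations plus the mixed ones. That is a short Boolean check: if $u\notin A_1\cup B_1$, then $v$ would have to lie in $A_1\cap B_1$, and by Lemma \ref{lem:arcs}(3) every point of $A_1\cap B_1$ is at distance $\ge m+2$ from every point outside $A_1\cup B_1$. This contradicts $d(u,v)\le m$, and the case $A_1\cup B_1=\ZZ_n$ is trivial. This use of Lemma \ref{lem:arcs}(3) is the step I expect to need the most care, together with handling the situation $A_1\cap B_1=\emptyset$.
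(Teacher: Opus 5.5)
Your final argument is correct and is essentially the paper's. The transfer pairs $(A_1,B_1)$ and $(B_1,A_1)$ coming from $e_1$ force ($u\in A_1$ or $v\in B_1$) and ($u\in B_1$ or $v\in A_1$), and Lemma \ref{lem:arcs}(3) then rules out the only remaining bad configuration, namely one endpoint in $A_1\cap B_1$ and the other outside $A_1\cup B_1$.

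Your ``mixed configuration'' $u\in A_1\setminus B_1$, $v\in B_1\setminus A_1$ is already excluded by the second disjunction, as you eventually observe. So the coordinate computations and the search for further transfer pairs in your middle paragraphs are unnecessary. The case expansion is cleaner if done as in the paper, where the nontrivial cases are exactly those with an endpoint in $A_1\cap B_1$.
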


\begin{proof}
Let $\{u,v\}$ be an edge of $\overline{G'}$. Then $d(u,v)\le m$, and, by
\eqref{eq:Gprime}, $u$ and
$v$ are not even-connected with respect to $M$. Recall that $A_1=\ball{a_1}$
and $B_1=\ball{b_1}$, where $\{a_1,b_1\}$ is the first edge of $M$, and that
$(A_1,B_1)$ and $(B_1,A_1)$ both lie in $\cQ(M)$. Suppose that
$u\notin A_1$ and $v\notin B_1$. Then the pair $(A_1,B_1)$ lies in
$\cM(u)\times\cM(v)$. Hence, by Lemma \ref{lem:transfer}, $u$ and $v$ are
even-connected, which is a contradiction. Therefore, $u\in A_1$ or
$v\in B_1$. Applying the same argument to $(B_1,A_1)$, we also get
$v\in A_1$ or $u\in B_1$.

Expanding the two disjunctions gives four cases. If $u\in A_1$ and
$v\in A_1$, then $\{u,v\}\subseteq A_1$. If $v\in B_1$ and $u\in B_1$, then
$\{u,v\}\subseteq B_1$. In the two mixed cases, either $u\in A_1$ and
$u\in B_1$, which gives $u\in A_1\cap B_1$, or $v\in B_1$ and $v\in A_1$,
which gives $v\in A_1\cap B_1$. Hence either the edge already lies in one
of the two balls, in which case we are done, or one of its endpoints lies
in $A_1\cap B_1$. After exchanging the names of $u$ and $v$ if necessary,
we may assume that $u\in A_1\cap B_1$.

If $A_1\cup B_1=\ZZ_n$, then $v\in A_1\cup B_1$ trivially. Assume that
$A_1\cup B_1\ne\ZZ_n$. Since $u\in A_1\cap B_1$, the intersection
$A_1\cap B_1$ is nonempty. Moreover, $d(a_1,b_1)\ge m+1$. Hence Lemma
\ref{lem:arcs}(3) gives $d(u,w)\ge m+2$ for every $w\notin A_1\cup B_1$. On
the other hand, $d(u,v)\le m$. Hence $v\in A_1\cup B_1$ in this case as
well. Since $u$ belongs to both balls, we conclude that the edge $\{u,v\}$
lies entirely in $A_1$ or entirely in $B_1$.
\end{proof}

\begin{lemma}\label{lem:elim}
Let $n\ge4m+1$, and let $J\subsetneq\ZZ_n$ be an arc such that both
$A_1\cap J$ and $B_1\cap J$ are intervals, possibly empty, in the induced
linear order of $J$.
Then the largest element of $J$ is simplicial in $\overline{G'}[J]$.
Consequently, listing the elements of $J$ in decreasing order gives a perfect
elimination ordering, and in particular $\overline{G'}[J]$ is chordal.
\end{lemma}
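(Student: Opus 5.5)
The plan is to fix the largest element $z$ of $J$ and show directly that its neighbours in $\overline{G'}[J]$ are pairwise adjacent in $\overline{G'}$. The perfect elimination ordering then follows by applying the same statement to the arcs $J\setminus\{z\}$, $J\setminus\{z,z'\}$, and so on. These are again arcs whose last element is removed, and their intersections with $A_1$ and $B_1$ remain intervals in the induced order. Chordality follows from the characterisation recalled in Section~\ref{sec:prelim}.

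If $z\in S$, then $z$ is isolated by Lemma~\ref{lem:Siso} and there is nothing to prove, so assume $z\notin S$. Let $u$ be a neighbour of $z$ in $\overline{G'}[J]$. By the remark after \eqref{eq:S} we have $d(u,z)\le m$, and by Lemma~\ref{lem:local} the pair $\{u,z\}$ lies in $A_1$ or in $B_1$.

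\textbf{Step 1: all neighbours of $z$ lie just below $z$.} The first step is to show that every such $u$ lies in the segment of $J$ consisting of the elements at linear distance at most $m$ below $z$. Say $\{u,z\}\subseteq A_1$. Then $A_1\cap J$ is an interval of the linear order of $J$ containing both $u$ and $z$, so it contains every element of $J$ between them. Since $A_1$ is an arc with $2m+1$ elements, Lemma~\ref{lem:arcs}(2) shows that $A_1$ contains the shorter arc from $u$ to $z$. Comparing the two arcs joining $u$ and $z$, the part of $J$ between $u$ and $z$ must be this shorter arc rather than a wrap-around through the complement of $J$. Hence $u$ lies in $[z-m,z]$ in the coordinates of $J$.

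This is the step I expect to be the main obstacle. It is the only place where the interval hypothesis on $A_1\cap J$ and $B_1\cap J$ enters, and the case where $J$ is long and nearly wraps around must be excluded carefully by counting elements of $A_1$.

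\textbf{Step 2: two neighbours are close.} Take two neighbours $u<v<z$ in $[z-m,z]$. Their linear distance is at most $m$, so $d(u,v)\le m$, and they are not adjacent in $G$. It remains to show that $u$ and $v$ are not even-connected.

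\textbf{Step 3: $u$ and $v$ are not even-connected.} Suppose they are. By Lemma~\ref{lem:transfer} there is $(X,Y)\in\cQ(M)$ with $u\notin X$ and $v\notin Y$, where $X=\ball{c}$ and $Y=\ball{c'}$.
\begin{itemize}
\item Since $z$ and $v$ are not even-connected, Lemma~\ref{lem:transfer} forbids $z\notin X$ together with $v\notin Y$. As $v\notin Y$, this forces $z\in X$.
\item Apply the same reasoning to the pair $(Y,X)\in\cQ(M)$, which lies in $\cQ(M)$ by symmetry, together with the fact that $u$ and $z$ are not even-connected. Since $u\notin X$, this forces $z\in Y$.
\end{itemize}
Now $Y$ is an arc of $2m+1$ elements that contains $z$ but not $v$. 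The arc from $v$ to $z$ has at most $m+1$ elements. If $Y$ contained $u$, then $Y\cap[u,z]$ would have to skip $v$, which would force $Y$ to contain the long complementary arc of more than $n-m\ge 3m+1$ elements. This is impossible, so $u\notin Y$. Thus $(X,Y)\in\cQ(M)\cap(\cM(u)\times\cM(u))$, so $u\in S$ by \eqref{eq:S}. By Lemma~\ref{lem:Siso}, $u$ is then isolated in $\overline{G'}$, which contradicts $u$ being adjacent to $z$.

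Hence $\{u,v\}\in E(\overline{G'})$, and $z$ is simplicial in $\overline{G'}[J]$.
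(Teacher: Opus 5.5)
Your proposal is correct and follows essentially the same route as the paper: Step 1 is the paper's first paragraph, where the interval hypothesis forces neighbours of $\max J$ to lie within linear, and hence cyclic, distance $m$. Step 3 is the paper's argument that a ball omitting the middle vertex must omit one of the two outer vertices (packaged there as $\cM(u_2)\subseteq\cM(u_1)\cup\cM(v)$ via Lemma~\ref{lem:arcs}(2)), and it reaches the same two contradictions; only your first bullet deriving $z\in X$ and your separate treatment of the case $z\in S$ are superfluous.
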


\begin{proof}
Let $\{u,w\}$ be an edge of $\overline{G'}[J]$. By Lemma \ref{lem:local},
its two endpoints lie either in $A_1$ or in $B_1$. Hence they lie in the
interval $A_1\cap J$ or in the interval $B_1\cap J$. Each of these intervals
has at most $2m+1$ elements. Therefore, the linear distance $\delta$ between
$u$ and $w$ in $J$ is at most $2m$. Since $J$ is a proper arc, the two arcs
joining $u$ and $w$ have $\delta$ and $n-\delta$ steps, and
$n-\delta\ge n-2m\ge2m+1>\delta$. Hence the cyclic distance of $u$ and $w$
equals $\delta$. Moreover, since $E(G)\subseteq E(G')$ by
\eqref{eq:Gprime}, the vertices $u$ and $w$ are not adjacent in $G$. Hence
this distance is at most $m$.

Let $v=\max J$. If $v$ has at most one neighbor in $\overline{G'}[J]$, then
$v$ is simplicial. Assume otherwise, and let $u_1<u_2<v$ be two neighbors of
$v$ in
$\overline{G'}[J]$. By the first paragraph, we have $v-u_1=d(u_1,v)\le m$.
Since $u_2$ lies between $u_1$ and $v$ in the linear order of $J$, it lies
on the segment of $J$ from $u_1$ to $v$. This segment has $d(u_1,v)$
steps. Hence it is the shorter arc joining $u_1$ to $v$, and $u_2$ lies on
that shorter arc. By Lemma \ref{lem:arcs}(2), every
ball of $\cB$ containing both $u_1$ and $v$ contains that shorter arc.
Hence it contains $u_2$. In other words, a ball of $\cB$ that omits $u_2$
omits $u_1$ or omits $v$. This implies that
$\cM(u_2)\subseteq\cM(u_1)\cup\cM(v).$
Moreover, $u_2-u_1<v-u_1\le m$. Hence, as in the first paragraph,
$d(u_1,u_2)=u_2-u_1\le m$.

Suppose that $u_1$ and $u_2$ are even-connected. Then, by Lemma
\ref{lem:transfer}, there is a pair $(X,Y)\in\cQ(M)$ with $X\in\cM(u_1)$
and $Y\in\cM(u_2)$. By the inclusion above, either $Y\in\cM(v)$ or
$Y\in\cM(u_1)$. If $Y\in\cM(v)$, then
$(X,Y)\in\cQ(M)\cap\bigl(\cM(u_1)\times\cM(v)\bigr)$. Hence, by Lemma
\ref{lem:transfer}, $u_1$ and $v$ are even-connected. Therefore, by
\eqref{eq:Gprime}, $u_1$ and $v$ are not adjacent in $\overline{G'}$, which
is a contradiction. If $Y\in\cM(u_1)$, then
$(X,Y)\in\cQ(M)\cap\bigl(\cM(u_1)\times\cM(u_1)\bigr)$. Hence, by
\eqref{eq:S}, we get $u_1\in S$. Therefore, by Lemma \ref{lem:Siso}, the
vertex $u_1$ is isolated in $\overline{G'}$. This contradicts the fact that
$u_1$ is adjacent to $v$.

Therefore, $u_1$ and $u_2$ are not even-connected. Since $d(u_1,u_2)\le m$,
it follows from \eqref{eq:Gprime} that $\{u_1,u_2\}$ is an edge of
$\overline{G'}$, and hence of $\overline{G'}[J]$, since $u_1,u_2\in J$.
Consequently, the
neighbors of $v$ form a clique, and $v$ is simplicial. Note that, when
$|J|\ge2$, the
hypotheses of the lemma are preserved after removing the largest element of
$J$. Indeed, $J\setminus\{v\}$ is again a proper arc, and removing the
largest element of $J$ removes at most the largest element of each of the
two intervals. Therefore, repeating the argument gives a perfect
elimination ordering in decreasing order. Since a graph with a perfect
elimination ordering is chordal, $\overline{G'}[J]$ is chordal.
\end{proof}

\begin{theorem}\label{thm:key}
Let $n\ge4m+1$, let $I=I(\Gnm)$ and let $k\ge1$. If $M$ is a minimal monomial
generator of $I^k$, then $\reg(I^{k+1}:M)=2$.
\end{theorem}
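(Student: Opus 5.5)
We reduce to a chordality statement via polarization and Fröberg's theorem. Set $J=(I^{k+1}:M)$. By Theorem \ref{thm:banerjeecolon}, $J$ is generated in degree two. Its polarization is the edge ideal $I(G'')$, where $G''$ is obtained from $G'$ by attaching a pendant vertex $u'$ to each $u\in S$. Polarization preserves regularity, and $J\ne0$ since $I\subseteq J$. So by Theorem \ref{thm:froberg} it suffices to show that $\overline{G''}$ is chordal.

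First we handle the pendant vertices. In $\overline{G''}$, a new vertex $u'$ is adjacent to every vertex except $u$. By Lemma \ref{lem:Siso}, each $u\in S$ is isolated in $\overline{G'}$. We would use the following two standard facts about chordality.
\begin{enumerate}[label=\textup{(\alph*)}]
\item Adding a vertex adjacent to everything (a universal vertex) preserves chordality.
\item Adding an isolated vertex preserves chordality.
\end{enumerate}
The vertices $u'$ are pairwise adjacent in $\overline{G''}$, and $u'$ misses only $u$. So we can build $\overline{G''}$ from $\overline{G'}[\ZZ_n\setminus S]$ in a controlled way, checking directly that no chordless cycle of length $\ge4$ can use a vertex $u'$ or $u$. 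A vertex $u\in S$ has only the neighbours $v'$ with $v\ne u$, and these form a clique, so $u$ is simplicial. After removing all of $S$, each $u'$ becomes universal. Hence the ordering that lists the vertices of $S$ first, then $\overline{G'}[\ZZ_n\setminus S]$ in a perfect elimination order, then the $u'$, is a perfect elimination ordering of $\overline{G''}$. This reduces everything to the chordality of $\overline{G'}$, since induced subgraphs of chordal graphs are chordal.

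Next we prove that $\overline{G'}$ is chordal using Lemma \ref{lem:elim}. By Lemma \ref{lem:local}, every edge of $\overline{G'}$ lies inside $A_1$ or inside $B_1$. Suppose $A_1\cup B_1\ne\ZZ_n$. Choose a vertex $z\notin A_1\cup B_1$ and let $J=\ZZ_n\setminus\{z\}$, linearly ordered starting at $z+1$. Then $A_1$ and $B_1$ are arcs avoiding $z$, so they are intervals in $J$. Also $z$ is isolated in $\overline{G'}$ by Lemma \ref{lem:local}. Lemma \ref{lem:elim} then gives that $\overline{G'}[J]$ is chordal, hence so is $\overline{G'}$.

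Now suppose $A_1\cup B_1=\ZZ_n$. Then $n\le4m+2$, so $n\in\{4m+1,4m+2\}$ and $|A_1\cap B_1|\le1$. By Lemma \ref{lem:arcs}(1) the intersection is a single arc, and it meets each ball at one end. We cut at a point $z$ chosen so that both balls stay intervals of $J=\ZZ_n\setminus\{z\}$. When the intersection is the single point $w$, choose $z=w$. When it is empty, choose $z$ to be an endpoint of $A_1$ adjacent to $B_1$; then $A_1\setminus\{z\}$ and $B_1$ are intervals.

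It remains to handle the removed vertex $z$. Its neighbours in $\overline{G'}$ lie in a ball containing $z$ at an end, within distance $m$ on one side. We would check that $z$ is simplicial. This uses the same $\cM$-inclusion argument as in Lemma \ref{lem:elim}, with $z$ in the role of the extreme vertex and the neighbours on one side of it. Alternatively we apply Lemma \ref{lem:elim} to the arc $J'=\ZZ_n\setminus\{z_0\}$, where $z_0$ is the far endpoint opposite. This boundary case $A_1\cup B_1=\ZZ_n$ is where I expect the main difficulty. The key verification is that some cut point makes both $A_1\cap J$ and $B_1\cap J$ intervals while the deleted vertex is simplicial or isolated. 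I would first try taking $z$ in $A_1\cap B_1$, or just past the end of $B_1$ inside $A_1$, and rerun the simpliciality argument of Lemma \ref{lem:elim} for $z$.

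With $\overline{G''}$ chordal, Theorem \ref{thm:froberg} gives $\reg(I(G''))=2$, hence $\reg(J)=2$.
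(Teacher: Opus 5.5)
Your overall architecture is sound, and two of its pieces differ usefully from the paper.

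\begin{itemize}
\item Your perfect elimination ordering of $\overline{G''}$ is valid: first $S$, then a perfect elimination ordering of $\overline{G'}[\ZZ_n\setminus S]$, then the pendant vertices. It uses Lemma \ref{lem:Siso} exactly as needed, and it replaces the paper's analysis of an induced $4$-cycle through a pendant vertex.
\item Cutting at a vertex $z\notin A_1\cup B_1$, which is isolated by Lemma \ref{lem:local}, disposes at once of every configuration with $A_1\cup B_1\ne\ZZ_n$. The paper splits these configurations into two cases.
\item The covering case with $A_1\cap B_1=\emptyset$ also works. There your $z$ lies only in $A_1$, at its end, so its neighbours in $\overline{G'}$ are on one side within distance $m$, and the argument of Lemma \ref{lem:elim} shows they form a clique. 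It is simpler, as in the paper, to note that $\overline{G'}$ is then the disjoint union of $\overline{G'}[A_1]$ and $\overline{G'}[B_1]$.
\end{itemize}

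The gap is the case $n=4m+1$, $A_1\cap B_1=\{w\}$, where you cut at $z=w$ and plan to show that $z$ is simplicial. This fails. The vertex $w$ is an end of \emph{both} balls, so its neighbours in $\overline{G'}$ can lie on both sides of it. By Lemma \ref{lem:local}, a neighbour in $A_1\setminus B_1$ and a neighbour in $B_1\setminus A_1$ are never adjacent.

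Here is a concrete instance. Take $k=1$, $M=x_ax_b$ with $d(a,b)=2m$, and $w=a+m=b-m$. Since $w\in\ball{a}\cap\ball{b}$, $w$ is adjacent in $G$ to neither $a$ nor $b$, so it is even-connected to no vertex. Hence $w-1$ and $w+1$ are both neighbours of $w$ in $\overline{G'}$. But $w-1\in A_1\setminus B_1$ and $w+1\in B_1\setminus A_1$ are not adjacent. For $m=1$, $n=5$, $a=0$, $b=2$, the graph $\overline{G'}$ is the path $4,0,1,2,3$ and $w=1$ is an interior vertex. So your claim that the neighbours of $z$ lie on one side is false for $z=w$, and $w$ is not simplicial. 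The hedge \emph{taking $z$ in $A_1\cap B_1$} fails for the same reason.

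There are two ways to close this case.

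\begin{enumerate}
\item Cut instead at $z_0$, the endpoint of $A_1$ other than $w$. It sits next to the far end of $B_1$, since $w-2m\equiv w+2m+1$. Then $A_1\setminus\{z_0\}$ and $B_1$ are intervals of $\ZZ_n\setminus\{z_0\}$, so Lemma \ref{lem:elim} applies there. Since $z_0\notin B_1$, all neighbours of $z_0$ lie in $A_1$ on one side of $z_0$ within distance $m$. The $\cM$-inclusion argument of Lemma \ref{lem:elim} then makes $z_0$ simplicial. That argument needs only $d(u_1,z_0)\le m$, the position of $u_2$ on the shorter arc, Lemma \ref{lem:arcs}(2) and Lemma \ref{lem:Siso}. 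This is your \emph{far endpoint} alternative, and it is the one you must use.
\item Argue as the paper does, using that $w$ is a cut vertex. By Lemma \ref{lem:local}, no edge of $\overline{G'}$ joins $A_1\setminus\{w\}$ to $B_1\setminus\{w\}$, so every induced cycle lies in $\overline{G'}[A_1]$ or in $\overline{G'}[B_1]$. Both are chordal by Lemma \ref{lem:elim} applied to the proper arcs $A_1$ and $B_1$.
\end{enumerate}
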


\begin{proof}
Let $G''$ be the graph obtained from $G'$ by attaching a pendant vertex
$u'$ to every $u\in S$. Then $(I^{k+1}:M)^{\mathrm{pol}}=I(G'')$, as noted
after Theorem \ref{thm:banerjeecolon}. Since
polarization preserves regularity \cite[Corollary 1.6.3]{HHbook}, it
suffices, by Theorem
\ref{thm:froberg}, to show that $\overline{G''}$ is chordal.

We first prove that $\overline{G'}$ is chordal. By Lemma \ref{lem:local},
every edge of $\overline{G'}$ lies inside $A_1$ or inside $B_1$.

Case 1: Suppose that $A_1\cap B_1=\emptyset$. Since every edge of
$\overline{G'}$ lies inside $A_1$ or inside $B_1$, the graph
$\overline{G'}$ is the
disjoint union of $\overline{G'}[A_1]$, $\overline{G'}[B_1]$ and isolated
vertices. Note that both $A_1$ and $B_1$ are proper arcs whose
intersections with the other ball are empty. Hence, by Lemma
\ref{lem:elim}, both induced subgraphs are chordal. Since a disjoint union
of chordal graphs is chordal, $\overline{G'}$ is chordal.

Case 2: Suppose that $A_1\cap B_1\ne\emptyset$ and that $L=A_1\cup B_1$ is
not all of $\ZZ_n$. Then $L$ is an arc. Indeed, both balls lie in the
complement of a point, and the union of two intersecting subarcs of an arc
is an arc. Moreover, every edge of $\overline{G'}$ lies
in $L$, and $A_1$ and $B_1$, being arcs contained in the proper arc $L$,
are intervals in the linear order of $L$.
Therefore, by Lemma \ref{lem:elim} applied to $L$, the graph
$\overline{G'}[L]$ is chordal. Since all remaining vertices are isolated,
$\overline{G'}$ is chordal.

Case 3: Suppose that $A_1\cap B_1\ne\emptyset$ and that
$A_1\cup B_1=\ZZ_n$. Then
$n=|A_1\cup B_1|=4m+2-|A_1\cap B_1|$. Since $n\ge4m+1$ and
$|A_1\cap B_1|\ge1$, this implies that
$n=4m+1$ and $|A_1\cap B_1|=1$, say $A_1\cap B_1=\{c\}$. By Lemma
\ref{lem:elim} applied separately to $A_1$ and to $B_1$, the graphs
$\overline{G'}[A_1]$ and $\overline{G'}[B_1]$ are chordal. Since every edge
of $\overline{G'}$ lies inside $A_1$ or inside $B_1$, no edge joins
$A_1\setminus\{c\}$ to $B_1\setminus\{c\}$. We claim that $\overline{G'}$ is
chordal. Indeed, a cycle meeting both $A_1\setminus\{c\}$ and
$B_1\setminus\{c\}$ would have to pass through $c$ at least twice, which is
impossible. Hence every cycle of $\overline{G'}$ lies in $A_1$ or in $B_1$.
In particular, an induced cycle of length at least four would be an induced
cycle of $\overline{G'}[A_1]$ or of $\overline{G'}[B_1]$, which is
impossible. This proves the claim.

We now show that $\overline{G''}$ is chordal. Suppose, to the contrary, that
$\overline{G''}$ contains an induced cycle $C$ of length $r\ge4$. If every
vertex of $C$ lies in $\ZZ_n$, then $C$ is an induced cycle of
$\overline{G'}$, since $G''$ and $G'$ have the same edges inside $\ZZ_n$.
This contradicts the chordality proved above. Hence $C$
contains a pendant vertex $u'$ corresponding to some $u\in S$.

Since $u$ is the only neighbor of $u'$ in $G''$, the vertex $u'$ is
adjacent in $\overline{G''}$ to every vertex except $u$. Hence $u'$ has only
one non-neighbor. On the other hand, on an induced cycle of length $r$
through $u'$, the vertex $u'$ is nonadjacent to exactly $r-3$ other
vertices of the cycle. Hence $r-3\le1$, and therefore $r=4$. The vertex
opposite to $u'$ on the cycle is nonadjacent to $u'$. Hence it equals
$u$. Therefore, we may write the cycle as $u',x,u,y$, where $x$ and
$y$ are adjacent to $u$ in $\overline{G''}$ and $x\not\sim y$.

By Lemma \ref{lem:Siso}, the vertex $u$ is isolated in $\overline{G'}$.
Hence the neighbors of $u$ in $\overline{G''}$ are pendant vertices.
Moreover, $u'$ is not a neighbor of $u$. Hence these neighbors correspond
to elements of $S\setminus\{u\}$. Write $x=w'$ and $y=z'$.
Note that the vertex $w'$ is adjacent in $\overline{G''}$ to every vertex
except $w$, and $z'\ne w$, since $z'$ does not lie in $\ZZ_n$. Therefore,
$w'$ and $z'$ are adjacent. This contradicts $x\not\sim y$.

Hence $\overline{G''}$ is chordal. Note that $MI\subseteq I^{k+1}$. Hence
$I\subseteq(I^{k+1}:M)$, and $I(G'')$ is nonzero. Therefore, by Theorem
\ref{thm:froberg}, we get $\reg I(G'')=2$. Since polarization preserves
regularity, as noted above, we conclude that
$\reg(I^{k+1}:M)=2$.
\end{proof}

Theorem \ref{thm:key} yields a direct proof of the equality
$\reg(I^k)=2k$ for $k\ge2$ in Theorem \ref{thm:main}. Indeed, by a result
of Banerjee \cite[Theorem 5.2]{Ba}, we have
\[
\reg(I^{k+1})\le\max\bigl\{\reg(I^k),\ \max_M\reg(I^{k+1}:M)+2k\bigr\}
\qquad\text{for every }k\ge1,
\]
where $M$ runs over the minimal monomial generators of $I^k$. By Theorem
\ref{thm:key}, the inner maximum equals $2k+2$. Since $\reg I=3$ by
\cite[Corollary 3.12]{RPA}, induction on $k$ gives $\reg(I^k)\le2k$ for
every $k\ge2$. On the other hand, $I^k$ is generated in degree $2k$. Hence
$\reg(I^k)=2k$ for every $k\ge2$.

\end{document}